\definecolor{green}{rgb}{0,0.8,0} 
\newtheorem{definition}{Definition}
\newtheorem{lemma}{Lemma}
\newtheorem{corollary}{Corollary}
\newtheorem{remark}{Remark}
\newtheorem{theorem}{Theorem}
\newtheorem{prop}{Proposition}
\newcommand{\N}{\mathbb{N}}
\newcommand{\bR}{\mathbb{R}}
\newcommand{\bu}{\mathbf{u}}
\newcommand{\bv}{\mathbf{v}}
\newcommand{\norm}[1]{{\left\Vert #1 \right\Vert}}
\newcommand{\set}[1]{{\left\lbrace #1 \right\rbrace}}
\newcommand{\ep}{\varepsilon}
\newcommand{\bfvarphi}{\boldsymbol{\varphi}}
\newcommand{\bfphi}{\boldsymbol{\phi}}
\newcommand{\qand}{\quad \text{and} \quad}
\DeclareMathOperator*{\divg}{div}
\DeclareMathOperator*{\loc}{loc}
\DeclareMathOperator*{\bog}{Bog}
\begin{document}

\bibliographystyle{plain}

\title{Liouville-type theorems for steady solutions of the barotropic Navier--Stokes system}

\author{Youseung Cho\thanks{Department of Mathematics, Yonsei University, Seoul 03722, Republic of Korea. 
E-mail: youseung@yonsei.ac.kr} 
\and Eduard Feireisl \thanks{Czech Academy of Sciences, Institute
of Mathematics, \v{Z}itn\'a 25, 115 67 Praha 1, Czech Republic.
E-mail: feireisl@math.cas.cz}
\and 
Minsuk Yang\thanks{Department of Mathematics, Yonsei University, Seoul 03722, Republic of Korea. 
E-mail: m.yang@yonsei.ac.kr}
}

  \maketitle

\renewcommand{\thefootnote}{\fnsymbol{footnote}}
\footnotetext{\emph{Key words: Barotropic Navier--Stokes system, stationary solution, Liouville theorem }  \\
\emph{2020 AMS Mathematics Subject Classification:} 35Q30, 76D03, 76D05.}
\renewcommand{\thefootnote}{\arabic{footnote}}

\begin{abstract} 
We establish various results concerning the uniqueness of zero velocity solutions 
for the static barotropic Navier--Stokes system. Some of them can be seen as  Liouville-type theorems for problems in unbounded physical space. 
\end{abstract}

\section{Introduction}
\label{S1}

We consider the compressible (barotropic) Navier--Stokes equations 
\begin{align}
\label{E11}
\divg(\rho \bu) &= 0, \\
\label{E12}
\divg(\rho \bu \otimes \bu) + \nabla p(\rho) - \divg \mathbb S(\nabla \bu) &= \rho \nabla G, 
\end{align}
describing the steady distribution of the mass density  
$\rho = \rho(x)$, and the velocity $\bu = \bu(x)$, in $\Omega \subset \bR^3$, of a barotropic viscous 
fluid subjected to a potential external force $\nabla G$. 
Here $p = p(\rho)$ denotes the pressure, and
$\mathbb S$ is the viscous stress given by Newton's rheological law
\begin{equation} \label{E12a}
\mathbb{S}(\nabla \bu) = \mu \left( \nabla \bu + \nabla^t \bu - \frac{2}{3} \divg \bu \mathbb{I} \right) + \eta \divg \bu \mathbb{I},\ \mu > 0, \ \eta \geq 0.
\end{equation}
In addition, we impose the no-slip boundary conditions
\begin{equation} \label{E12b}
\bu|_{\partial \Omega} = 0.	
\end{equation}
as soon as the spatial domain possesses a non-empty boundary $\partial \Omega$.

Problem \eqref{E11}--\eqref{E12b} admits an obvious (static) solution 
$\bu \equiv 0$, while 
\begin{equation} \label{E12d}
\nabla p(\rho) = \rho \nabla G.	
	\end{equation}
Our goal is to find some sufficient conditions for the static solution to be the unique \emph{stationary solution}, meaning there is no other 
(weak) solution with 
$\bu \ne 0$. Such a statement fits in the category of Liouville type theorems for the 
compressible Navier--Stokes system as soon as $\Omega =  \bR^3$.

There are results of Liouville type for the system in question obtained by 
several authors: \cite{MR2914143}, \cite{MR3223826}, \cite{MR3843580}, and \cite{MR4178952}. They focus primarily on regular solutions without 
the driving force $G$. Let us point out that stationary and even static solutions need not be regular, at least if $G \ne 0$ due to the inevitable 
presence of vacuum states, see e.g., \cite{FP7}. 

We show that the stationary velocity always vanishes under the physically relevant assumption that
the total mass of the fluid is finite; see Theorem \ref{T1} below. Besides, similar results hold for problems with finite energy; see Theorem \ref{T11}. 
We also consider the problem in the class of the so-called very weak solutions, where neither the total mass nor the total energy is not assumed to 
be controlled, Theorem \ref{T2}.
Finally, we consider various types of vanishing properties of the weak solution when $p(\rho) = \rho^{\gamma}$, Theorem \ref{T3}--\ref{T23}.

The paper is organized as follows. Section \ref{S2} introduces the necessary 
function spaces framework and contains some preliminary material 
on Liouville-type results for general elliptic problems. In Section \ref{m}, 
we introduce the concepts of weak and very weak solutions to the barotropic Navier--Stokes system and state our main results. Section \ref{pr} is devoted to the proofs. 

\section{Preliminaries}
\label{S2}

For tensor-valued function $F$ and $G$, we shall use the notation $F:G = \sum_{i,j} F_{ij} G_{ij}$.
We denote $\rho^{\ep} = \rho \ast \eta_{\ep}$, where $\eta_{\ep}$ is a standard mollifier.
We also denote $\varphi_{r,R} \in C^{\infty}_c(B(R))$ a non negative cut-off function satisfying $\varphi_{r,R} = 1$ on $B(r)$ and $|\nabla \varphi_{r,R}| \lesssim (R-r)^{-1}$.

We recall some definitions in \cite{MR2808162}.
We denote $D'(\Omega) = (C^{\infty}_c(\Omega))^{\ast}$. 
For $m \in \mathbb N$ and $1 \leq q < \infty$, the linear space
\begin{equation}
	\label{E21}
	D^{m,q}(\Omega) = \set{u \in L^1_{\loc}(\Omega) : D^lu \in L^q(\Omega), |l| = m}
\end{equation}
is equipped with the seminorm
\[
|u|_{m,q} := \left( \sum_{|l|=m} \int_{\Omega} |D^lu|^q \right)^{\frac{1}{q}}.
\]
Let $P_m$ be the class of all polynomials of degree $\leq m-1$.
For $u \in D^{m,q}$, set
\[
[u]_m = \set{w \in D^{m,q} : w = u + P \text{ for some } P \in P_m}.
\] 
Denote $\dot D^{m,q}(\Omega)$ the space of $[u]_m$ with $u \in D^{m,q}$.
Then \eqref{E21} induces the following norm in $\dot D^{m,q}$:
\[
|[u]_m|_{m,q} := |u|_{m,q}, \quad u \in [u]_m.
\]
Note that $\dot D^{m,2}$ is a Hilbert space.
We define the space $D^{m,q}_0(\Omega)$ as the completion of $C^{\infty}_c(\Omega)$ with respect to the norm $|\cdot|_{m,q}$.
As $|\cdot|_{m,q}$ defines a norm in $C^{\infty}_c(\Omega)$, $D^{m,q}_0(\Omega)$ is isomorphic to a closed subspace of $\dot D^{m,q}(\Omega)$.
We shall use another inner product in $D^{1,2}_0(\Omega; \bR^3)$.
For $\mu > 0 $ and $\eta \ge 0$,
\[
\langle \bu, \bv \rangle = \int_{\Omega} \mu ( \nabla \bu : \nabla \bv) + (\mu/3 + \eta)  \divg \bu \divg \bv dx.
\]
Then for $\bu \in D^{1,2}_0(\Omega; \bR^3)$ and $\bv \in C^{\infty}_{c}(\Omega; \bR^3)$, we have
\begin{equation}
	\label{E22}
	\langle \bu, \bv \rangle = \int_{\Omega} \mathbb S(\nabla \bu) : \nabla \bv dx. 
\end{equation}

Friedrichs' lemma about commutators
\begin{lemma}
	[Lemma 3.1 of \cite{MR2084891}]
	\label{L21}
	Suppose that $N \geq 2$.
	Let $1 \leq q,\beta \leq \infty$, $(q,\beta) \ne (1, \infty)$, $1/q + 1/\beta \leq 1$ and let
	\[
	\rho \in L^{\beta}_{\loc}(\bR^N), \quad \bu \in (W^{1,q}_{\loc}(\bR^N))^N.
	\]
	Then
	\[
	(\divg(\rho \bu) - \rho \divg \bu)^{\ep} - \bu \cdot \nabla \rho^{\ep} \to 0 
	\]
	strongly in $L^r_{\loc}(\bR^N)$ where $r \in [1,q)$ if $\beta = \infty$, $1 < q \leq \infty$ and $1/\beta + 1/q \leq 1/r \leq 1$ otherwise.
\end{lemma}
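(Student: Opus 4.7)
Since $\divg(\rho\bu)-\rho\divg\bu=\bu\cdot\nabla\rho$ as distributions, the quantity to control is $r_\ep(x):=(\bu\cdot\nabla\rho)^\ep(x)-\bu(x)\cdot\nabla\rho^\ep(x)$. The plan is to derive a pointwise identity for $r_\ep$ that isolates a term converging strongly to $\rho\,\divg\bu$ and one converging to $-\rho\,\divg\bu$, giving cancellation in the limit. Writing the mollification as $r_\ep(x)=\int[\bu(y)-\bu(x)]\cdot\nabla_y\rho(y)\,\eta_\ep(x-y)\,dy$ and integrating by parts in $y$ (justified by density of smooth functions and the integrability hypothesis $1/q+1/\beta\le 1$, which makes $\rho\bu$ locally integrable), I obtain the key identity
\begin{equation}\label{eq:key}
r_\ep(x)=-(\rho\,\divg\bu)^\ep(x)+\int \rho(y)\bigl[\bu(y)-\bu(x)\bigr]\cdot(\nabla\eta_\ep)(x-y)\,dy.
\end{equation}

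For smooth $\rho,\bu$, the first term on the right of \eqref{eq:key} converges pointwise (and in $L^r_{\loc}$) to $-\rho\,\divg\bu$ by the usual mollifier property. For the second term, I would substitute $y=x-\ep z$ and use the first-order Taylor expansion $\bu(x-\ep z)-\bu(x)=-\ep\int_0^1 \nabla\bu(x-\ep t z)z\,dt$, so the factor $\ep^{-N-1}$ in $\nabla\eta_\ep$ is absorbed. Passing $\ep\to 0$ and using the identity $\int z_j\,\p_i\eta(z)\,dz=-\delta_{ij}$ produces the limit $\rho(x)\,\divg\bu(x)$. Hence $r_\ep\to 0$ pointwise (and in $L^r_{\loc}$) for smooth data.

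The main obstacle is passing to the general case. For this I need a uniform bound
\[
\norm{r_\ep}_{L^r(K)}\le C\,\norm{\rho}_{L^\beta(K')}\,\norm{\nabla\bu}_{L^q(K')},\qquad K\Subset K'\Subset\bR^N,
\]
valid for all small $\ep>0$. Starting from the change-of-variables form of \eqref{eq:key}, the integrand involves $\rho(x-\ep z)$ and $\nabla\bu(x-\ep t z)$ weighted by the compactly supported kernel $|z||\nabla\eta(z)|\in L^1$; applying Minkowski's inequality in the $z$-integration and Hölder in the remaining $x$-integration with exponents dictated by $1/r\le 1/q+1/\beta$ yields the bound. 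The exclusion $(q,\beta)\ne(1,\infty)$ is needed precisely because Hölder fails at the dual endpoint, and the restriction $r<q$ when $\beta=\infty$ comes from the requirement of strong convergence (not merely weak) of the mollified $\bu$-factor.

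To finish, approximate $\rho$ and $\bu$ by smooth compactly supported $\rho_n,\bu_n$ converging in $L^\beta_{\loc}$ and $W^{1,q}_{\loc}$ respectively (splitting into $\beta<\infty$ and $\beta=\infty$ weak-$*$ cases, which motivates the strict inequality $r<q$ in the latter). Writing $r_\ep[\rho,\bu]-r_\ep[\rho_n,\bu_n]$ is trilinear in $(\rho-\rho_n)$, $(\bu-\bu_n)$, and cross terms; the uniform bound above controls each of these by $\|\rho-\rho_n\|_{L^\beta(K')}\|\nabla\bu\|_{L^q(K')}$ etc., uniformly in $\ep$. Choosing $n$ large, then $\ep$ small, and applying the smooth-case convergence to $r_\ep[\rho_n,\bu_n]$, gives the claim.
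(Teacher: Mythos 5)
The paper does not prove this lemma---it is quoted verbatim as Lemma 3.1 of Novotn\'y--Stra\v{s}kraba \cite{MR2084891} and used as a black box. So the comparison is against the cited source, which gives the standard DiPerna--Lions/Friedrichs commutator argument, and that is exactly what you reproduce. Your key identity \eqref{eq:key} is the right decomposition, the smooth-case cancellation via $\int z_j\,\partial_i\eta(z)\,dz=-\delta_{ij}$ is correct, the Minkowski-then-H\"older uniform bound $\norm{r_\ep}_{L^r(K)}\lesssim\norm{\rho}_{L^\beta(K')}\norm{\nabla\bu}_{L^q(K')}$ is the crux, and the closing density argument is the standard one.

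Two small points to tighten. First, the difference $r_\ep[\rho,\bu]-r_\ep[\rho_n,\bu_n]$ is controlled by \emph{bilinearity} of the commutator in $(\rho,\bu)$, splitting as $r_\ep[\rho-\rho_n,\bu]+r_\ep[\rho_n,\bu-\bu_n]$; calling it ``trilinear'' is a slip of terminology, though the estimate you describe is the right one. Second, the $\beta=\infty$ case is better handled by reduction rather than by appealing vaguely to weak-$*$ convergence: since $\rho\in L^\infty_{\loc}\subset L^{\tilde\beta}_{\loc}$ for every finite $\tilde\beta$, and since $r<q$ leaves room to choose $\tilde\beta<\infty$ with $1/q+1/\tilde\beta\le 1/r$, one simply invokes the already-proved finite-$\beta$ case with $\tilde\beta$ in place of $\beta$. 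This also makes transparent why $r<q$ (and hence $q>1$) is needed there, and why $(q,\beta)=(1,\infty)$ must be excluded. With these clarifications the argument is complete and matches the cited proof.
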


Next, we recall the following Liouville-type theorem for elliptic systems with constant coefficients.

\begin{lemma}
	[Theorem 1 of \cite{MR0709724}]
	\label{L2}
	Let $A_{\alpha}$ be a constant $M \times M$ matrix.
	Let $\bu \in C^{2l}(\bR^N, \mathbb C^M)$ satisfy for all $x \in \bR^N$,
	\begin{equation}
		\label{E23}
		L\bu(x) = \sum_{|\alpha| \leq 2l} A_{\alpha} \partial^{\alpha} \bu(x) = 0.
	\end{equation} 
	Assume that $\bu$ is bounded.
	Then $\bu$ is constant if and only if
	\[
	\sum_{|\alpha| \leq 2l} A_{\alpha} (ip)^{\alpha}
	\]
	is invertible for all $p \in \bR^N \setminus \set{0}$.
\end{lemma}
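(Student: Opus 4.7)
The plan is to prove both implications via Fourier analysis in the class of tempered distributions. Start by observing that any bounded measurable function on $\bR^N$ lies in $\mathcal S'(\bR^N;\mathbb C^M)$, so the componentwise Fourier transform $\hat \bu$ is well defined, and the equation $L\bu = 0$ becomes the algebraic identity
\[
\widehat L(p)\, \hat \bu(p) = 0 \text{ in } \mathcal S'(\bR^N;\mathbb C^M), \qquad \widehat L(p) := \sum_{|\alpha|\le 2l} A_\alpha (ip)^\alpha,
\]
because on the Fourier side each $\partial^\alpha$ acts as multiplication by $(ip)^\alpha$.

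For the sufficient direction, assume $\widehat L(p)$ is invertible for every $p \ne 0$. The goal is to show $\mathrm{supp}\,\hat \bu \subset \{0\}$. Given any test vector $\boldsymbol\psi \in C^\infty_c(\bR^N\setminus\{0\};\mathbb C^M)$, the function $\boldsymbol\varphi(p) := \widehat L(p)^{-T}\boldsymbol\psi(p)$ is smooth and compactly supported away from the origin; transposing the matrix polynomial $\widehat L$ off of $\hat \bu$ and onto $\boldsymbol\varphi$ yields $\langle \hat \bu, \boldsymbol\psi\rangle = \langle \widehat L \hat \bu, \boldsymbol\varphi\rangle = 0$. Hence $\hat \bu$ is supported at the origin, and the classical structure theorem identifies it as a finite linear combination of derivatives of $\delta_0$. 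Inverting the Fourier transform shows that $\bu$ is a polynomial, and since it is bounded on $\bR^N$ it must be constant.

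For the necessary direction, suppose instead that $\widehat L(p_0)$ is singular for some $p_0 \in \bR^N \setminus \{0\}$. Pick $v \in \ker \widehat L(p_0)\setminus\{0\}$ and set $\bu(x) = e^{ip_0\cdot x}\, v$. A direct differentiation gives $\partial^\alpha \bu(x) = (ip_0)^\alpha e^{ip_0\cdot x} v$, so $L\bu(x) = e^{ip_0\cdot x}\,\widehat L(p_0)\, v \equiv 0$; this $\bu$ is bounded in $C^\infty(\bR^N;\mathbb C^M)$ and manifestly nonconstant, exhibiting a counterexample to the Liouville property. By contraposition, invertibility of $\widehat L(p)$ for all $p\ne 0$ is necessary.

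The main technical obstacle is the division step in the sufficient direction: the entries of $\widehat L(p)^{-1}$ are in general singular at $p = 0$, which is precisely why $\hat \bu$ can be nonzero (supported at the origin) and why the conclusion is "constant" rather than "identically zero." The argument must therefore be carried out on the open set $\bR^N\setminus\{0\}$ only, and the passage from "$\hat \bu$ supported at a single point" to "$\bu$ a polynomial, hence constant" is the only non-elementary ingredient invoked.
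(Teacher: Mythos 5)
The paper does not prove this lemma; it is quoted verbatim as Theorem~1 of Weck~\cite{MR0709724} and used as a black box in the proof of Theorem~\ref{T2}, so there is no internal proof to compare against. Your Fourier-theoretic argument is correct and is essentially the standard proof (and in the same spirit as Weck's original): passing to tempered distributions, using invertibility of the symbol off the origin to localize $\mathrm{supp}\,\hat\bu$ to $\{0\}$, invoking the structure theorem for point-supported distributions to conclude $\bu$ is a bounded polynomial hence constant, and producing the plane-wave counterexample $e^{ip_0\cdot x}v$ with $v\in\ker\widehat L(p_0)$ for the converse. The only point worth flagging is bookkeeping, not substance: the division step needs $\widehat L(p)^{-T}\boldsymbol\psi(p)$ to be a legitimate test function, which holds because $\det\widehat L$ is a polynomial that is nonvanishing (hence bounded away from zero) on the compact set $\mathrm{supp}\,\boldsymbol\psi\subset\bR^N\setminus\{0\}$, so Cramer's rule gives a $C^\infty_c$ function; you state this implicitly and it is fine.
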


Consider linear elliptic equations in divergence form in $\Omega \subset \bR^3$
\begin{equation}
	\label{E24}
	D_j(a_{ij}(x) D_i \bu) = 0,
\end{equation}
where the coefficients $a_{ij}(x)$ are bounded measurable functions satisfying 
\[
\lambda |\xi|^2 \leq a_{ij}(x) \xi_i \xi_j \leq \Gamma |\xi|^2, \quad \forall x \in \Omega, \quad \xi \in \bR^3.
\]
We say that $\bu \in L_{\loc}^1(\Omega; \bR^3)$ is a very weak solution of \eqref{E24} if for all $\varphi \in C^{\infty}_c(\Omega)$,
\[
\int_{\Omega} \bu D_i(a_{ij} D_j \varphi) = 0.
\]

\begin{lemma}
	[Theorem 1.3 of \cite{MR2873863}]
	\label{L7}
	Assume that $a_{ij} \in C^{0,1}_{\loc}(\Omega)$.
	If $\bu \in L^1_{\loc}(\Omega; \bR^3)$ is a very weak solution of \eqref{E24}, then $\bu \in W^{2,p}_{\loc}(\Omega; \bR^3)$ for any $1 \leq p < \infty$.
\end{lemma}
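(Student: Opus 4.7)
The approach is a standard bootstrap via mollification combined with elliptic regularity for Lipschitz-coefficient operators. The plan has three main steps.

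First, I would localize and mollify. Fix $\Omega' \Subset \Omega'' \Subset \Omega$, and for $\epsilon$ smaller than the distance from $\Omega''$ to $\partial\Omega$ set $\bu^{\epsilon} = \bu \ast \eta_{\epsilon}$, which is smooth on $\Omega''$. Testing the very weak formulation against a test function of the form $\eta_{\epsilon}\ast\varphi$ (or, equivalently, invoking Fubini in the dual pairing) yields that $\bu^{\epsilon}$ satisfies the perturbed equation
\[
D_j(a_{ij}(x) D_i \bu^{\epsilon}(x)) = \mathcal R^{\epsilon}(x) \qquad \text{on } \Omega',
\]
where the remainder $\mathcal R^{\epsilon}$ collects the commutators between convolution and multiplication by $a_{ij}$ and its first derivatives. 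A Friedrichs-type commutator argument in the spirit of Lemma \ref{L21}, combined with $a_{ij} \in C^{0,1}_{\loc}$, shows that $\mathcal R^{\epsilon} \to 0$ strongly in a suitable local negative Sobolev space as $\epsilon \to 0$.

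Second, since $\bu^{\epsilon}$ is smooth and solves a uniformly elliptic equation with Lipschitz coefficients modulo the source $\mathcal R^{\epsilon}$, the classical Calder\'on--Zygmund / Agmon--Douglis--Nirenberg theory provides uniform $W^{2,p}_{\loc}$ estimates on $\bu^{\epsilon}$, provided one has a base $L^p_{\loc}$ bound on $\bu^{\epsilon}$. The required base integrability is extracted from the very weak formulation by a duality argument: for each $f \in C^{\infty}_{c}(\Omega')$ solve the auxiliary Dirichlet problem $D_j(a_{ij} D_i \psi) = f$ on a slightly larger set, invoke the Lipschitz-coefficient $W^{2,q}$ theory to produce $\psi \in W^{2,q}$ with the corresponding bound, and introduce a smooth cutoff to make it an admissible test function. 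This yields an estimate for $\int \bu f \, dx$ in terms of $\norm{f}_{L^{q'}}$, hence $\bu \in L^{q}_{\loc}$ for every $q < \infty$. Passing to the limit $\epsilon \to 0$ along a subsequence converging in $W^{2,p}_{\loc}$ and identifying the limit with $\bu$ gives $\bu \in W^{2,p}_{\loc}(\Omega;\bR^3)$ for every $p < \infty$.

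The main obstacle is precisely the first gain of regularity out of the $L^1_{\loc}$ hypothesis: the solutions $\psi$ furnished by the auxiliary Dirichlet problem are not compactly supported, so one must truncate them and carefully control the commutator $D_i(a_{ij} D_j(\psi\chi)) - \chi f$ produced by the cutoff $\chi$ in order for the resulting function to be legitimate in the very weak formulation. Once any base $L^p_{\loc}$ bound is available, the Calder\'on--Zygmund step together with the commutator analysis of step one is essentially routine, and the result follows by iteration in $p$.
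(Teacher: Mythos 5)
This lemma is not proved in the paper: Lemma \ref{L7} is quoted verbatim as Theorem~1.3 of Zhang--Bao~\cite{MR2873863}, and the paper never reproves it, so there is no in-paper argument to compare your proposal against. Taking the proposal on its own merits, it does identify the two ingredients that any proof must have --- a duality argument against an auxiliary Dirichlet problem, and control of the commutator created by the cutoff --- and those are indeed the hard points. Two remarks are in order.

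First, the duality step yields $\bu\in L^p_{\loc}$ for \emph{every} $p<\infty$ in a single pass, so no ``iteration in $p$'' is needed at that level. Solving $D_i(a_{ij}D_j\psi)=f$ in a ball $B_R$ with $f\in L^q(B_r)$, $r<R$, and using the cutoff $\chi$ supported in $B_{R'}$ with $\chi\equiv 1$ on a neighborhood of $B_r$, the cutoff commutator is supported in the annulus $B_{R'}\setminus B_{r}$ where $\psi$ solves the homogeneous equation; interior Schauder/Calder\'on--Zygmund estimates there bound the commutator in $L^\infty$ by $\norm{\psi}_{L^1(B_R)}\lesssim\norm{f}_{L^q}$ for any $q>1$. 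Hence $\left|\int \bu f\right|\lesssim\norm{\bu}_{L^1}\norm{f}_{L^q}$ for all $q>1$, i.e.\ $\bu\in L^{q'}_{\loc}$ for every $q'<\infty$.

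Second, and more importantly, the middle step of your plan --- mollify $\bu$, regard $\bu^\ep$ as solving $D_j(a_{ij}D_i\bu^\ep)=\mathcal R^\ep$, and invoke Calder\'on--Zygmund $W^{2,p}$ estimates uniformly in $\ep$ --- has a genuine gap. To apply non-divergence $W^{2,p}$ theory you would need $\mathcal R^\ep$ bounded in $L^p_{\loc}$ uniformly, but $\mathcal R^\ep$ is the \emph{second-order} dual commutator $(C^\ep)^*\bu$ built from terms of the type $D_iD_j\!\left([a_{ij}\bu]^\ep-a_{ij}\bu^\ep\right)$. Friedrichs-type estimates (such as Lemma \ref{L21}) control one derivative of a mollification commutator when the coefficient is Lipschitz, but two derivatives introduce an uncompensated factor $\ep^{-1}$ and the quantity need not stay bounded in $L^p$. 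The standard way around this is to interpose an intermediate step: once $\bu\in L^p_{\loc}$, apply the same duality argument to the translated difference quotients $h^{-1}(\bu(\cdot+he_k)-\bu)$ --- which are again very weak solutions up to a bounded right-hand side because $D_k a_{ij}\in L^\infty_{\loc}$ --- to conclude $\bu\in W^{1,p}_{\loc}$. At that stage the equation $\int a_{ij}D_i\bu\,D_j\varphi\,dx=0$ holds in the ordinary weak (variational) sense, and interior $W^{2,p}$ regularity for divergence-form operators with Lipschitz coefficients (freezing the coefficients and perturbing) gives $\bu\in W^{2,p}_{\loc}$. So your outline is salvageable, but the passage $L^p_{\loc}\to W^{2,p}_{\loc}$ needs to be split through $W^{1,p}_{\loc}$ rather than jumped in one mollification/CZ step.
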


\section{Main results}
\label{m}

We assume that the pressure satisfies 
\begin{equation}
\label{E13}
p \in C[0, \infty), \quad p(0)=0 \qand p(\rho) \geq 0
\end{equation}
for $\rho \geq 0$ and introduce the pressure potential $P$ satisfying
\begin{equation}
\label{E14}
P(0) = 0,\ 
P'(\rho) \rho - P(\rho) = p(\rho) \quad \text{ for } \rho > 0.
\end{equation}
Note that if $p \in C^1(0,\infty)$ and $p'(\rho) \ge 0$, then \[
P''(\rho) = \frac{p'(\rho)}{\rho} \ge 0 \quad \text{for all} \quad \rho > 0.
\]
In particular, $P:[0,\infty) \to \bR$ is convex and determined by \eqref{E14} modulo a linear function of $\rho$.
Therefore, we may fix $P$ by setting either 
\[
P'(0) = 0 \quad \text{if} \quad P'(0) > - \infty \quad \text{or} \quad P'(0) = - \infty.
\]

\subsection{Weak and very weak solutions}

\begin{definition} 
Let $\Omega \subset \bR^3$ be a domain.
We say that $(\rho,\bu)$ is \emph{weak solution} to \eqref{E11}-- \eqref{E12b} if 
\begin{enumerate}
\item
$\rho$, $\bu$ are measurable in $\Omega$ and $\rho \geq 0$ a.e. in $\Omega$;
\item
$\rho \bu \in L^1_{\loc} (\overline{\Omega}; \bR^3)$ and 
\begin{equation}
\label{E15}
\int_{\Omega} \rho \bu \cdot \nabla \phi dx = 0
\end{equation}
for any $\phi \in C^{\infty}_c(\bR^3)$;
\item
$\rho, \rho |\bu|^2, p(\rho), \rho \nabla G \in L^1_{\loc} (\Omega; \bR^3)$, 
$\bu \in D^{1,2}_0(\Omega; \bR^3)$, and
\begin{equation}
\label{E16}
\int_{\Omega} [\rho \bu \otimes \bu : \nabla \bfvarphi + p(\rho) \divg \bfvarphi] dx = 
\int_{\Omega} \mathbb S(\nabla \bu): \nabla \bfvarphi dx 
- \int_{\Omega} \rho \nabla G \cdot \bfvarphi dx
\end{equation}
for any $\bfvarphi \in C^1_c(\Omega; \bR^3)$.
\end{enumerate}
\end{definition}

\begin{definition}
Let $\Omega = \bR^3$.	
We say that $(\rho, \bu)$ is \emph{very weak solution} to \eqref{E11}-- \eqref{E12b} if 
\begin{enumerate}
	\item
	$\rho$, $\bu$ are measurable in $\bR^3$ and $\rho \geq 0$ a.e. in $\bR^3$;
	\item
	$\rho \bu \in L^1_{\loc} (\bR^3; \bR^3)$ and 
	\begin{equation}
		\label{E15a}
		\int_{\bR^3} \rho \bu \cdot \nabla \phi dx = 0
	\end{equation}
	for any $\phi \in C^{\infty}_c(\bR^3)$;
	\item
	$\rho, \rho |\bu|^2, p(\rho), \rho \nabla G \in L^1_{\loc} (\bR^3; \bR^3)$, 
	$\bu \in L^1_{\rm loc}(\bR^3; \bR^3)$, and
	\begin{equation}
		\label{E16a}
		\int_{\bR^3} [\rho \bu \otimes \bu : \nabla \bfvarphi + p(\rho) \divg \bfvarphi] dx = -
		 \int_{\bR^3} \divg \mathbb S(\nabla \bfvarphi )\cdot \bu dx 
		- \int_{\bR^3} \rho \nabla G \cdot \bfvarphi dx
	\end{equation}
	for any $\bfvarphi \in C^2_c(\bR^3; \bR^3)$.
\end{enumerate}
\end{definition}

\subsection{Main results}

We first show that the velocity associated to a stationary problem on \emph{arbitrary} spatial domain must vanish as long as the total mass of the fluid is finite.

\begin{theorem}
\label{T1}
Let $\Omega \subset \bR^3$ be a domain and let $(\rho, \bu)$ be a weak solution to problem \eqref{E11} -- \eqref{E12b}.
Suppose $p(\rho)$ satisfies \eqref{E13} and $p(\rho) \lesssim \sqrt \rho$ for $\rho < 1$ and $G$ satisfies either $G \in D_0^{1, q}(\Omega)$ for some $6/5 \le q < \infty$ or $G \in W^{1, \infty}(\Omega)$.
If 
\begin{equation}
\label{E18}
\rho \in L^1 \cap L^{\infty} (\Omega),
\end{equation}
then $\bu=0$ a.e. 
\end{theorem}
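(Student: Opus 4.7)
The plan is to derive an energy identity by testing the weak momentum equation \eqref{E16} against the vector field $\bfvarphi = \bu\, \varphi_{r,R}$, and then to send $R = 2r \to \infty$ in order to force the viscous dissipation to vanish. Once I have established $\int_\Omega \mathbb S(\nabla \bu) : \nabla \bu \, dx = 0$, the coercivity relation
\[
\int_\Omega \mathbb S(\nabla \bu) : \nabla \bu \, dx = \mu \|\nabla \bu\|_{L^2(\Omega)}^2 + (\mu/3 + \eta) \|\divg \bu\|_{L^2(\Omega)}^2,
\]
which holds for $\bu \in D^{1,2}_0$ by \eqref{E22} after integration by parts, immediately gives $\nabla \bu = 0$, and hence $\bu = 0$. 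The admissibility of $\bu\,\varphi_{r,R}$ as a test field in \eqref{E16} is a density argument: each nonlinear term is continuous on $W^{1,2}_c$-test fields (using $\rho\bu\otimes\bu,\,p(\rho),\,\mathbb S(\nabla\bu) \in L^{3/2}_{\loc}\cap L^2_{\loc}$), and the force term is continuous on $L^{q'}_c$ with $q' \le 6$, which is exactly the content of the assumption $q \ge 6/5$ on $G$.

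To assemble the identity I would rewrite three of the four nonlinear contributions. For the convective term I plug $\psi = \tfrac{1}{2}|\bu|^2\,\varphi_{r,R}$ into \eqref{E15} (justified by density since $\rho\bu \in L^p(\Omega)$ for all $p \in [1,6]$) to obtain $\int_\Omega \rho\,\bu\otimes\bu : \nabla(\bu\,\varphi_{r,R}) \, dx = \tfrac{1}{2}\int_\Omega \rho|\bu|^2\,\bu\cdot\nabla\varphi_{r,R}\,dx$. Taking $\psi = G\,\varphi_{r,R}$ in \eqref{E15} similarly converts the force integral into $-\int_\Omega \rho G\,\bu\cdot\nabla\varphi_{r,R}\,dx$. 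For the pressure I invoke Friedrichs' commutator (Lemma~\ref{L21}, applicable since $\rho \in L^\infty$ and $\bu \in W^{1,2}_{\loc}$) combined with a truncation in $P$ to establish the renormalized continuity equation $\divg(P(\rho)\bu) = -p(\rho)\,\divg\bu$ in $\mathcal D'(\Omega)$; this turns the pressure contribution into $\int_\Omega [P(\rho) + p(\rho)]\,\bu\cdot\nabla\varphi_{r,R}\,dx$. The resulting identity reads
\[
\int_\Omega \mathbb S(\nabla \bu):\nabla \bu\,\varphi_{r,R}\,dx = \int_\Omega \bigl[\tfrac{1}{2}\rho|\bu|^2 + P(\rho) + p(\rho) - \rho G\bigr]\bu\cdot\nabla\varphi_{r,R}\,dx - \int_\Omega \mathbb S(\nabla\bu)\bu\cdot\nabla\varphi_{r,R}\,dx.
\]

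The quantitative core is to show that every term on the right vanishes as $r \to \infty$, using $|\nabla\varphi_{r,R}| \lesssim r^{-1}$ on the annulus $A_r = B(2r)\setminus B(r)$. The a priori integrabilities I rely on are $\bu \in L^6(\Omega)$ (Sobolev embedding of $D^{1,2}_0$), $\rho \in L^2 \cap L^\infty$, $\mathbb S(\nabla \bu) \in L^2$, and decisively $P(\rho),\, p(\rho) \in L^2(\Omega)$. This last fact is precisely where the hypothesis $p(\rho) \lesssim \sqrt\rho$ for $\rho < 1$ enters: combined with boundedness of $\rho$ and the ODE \eqref{E14}, it forces $|p(\rho)|^2,\,|P(\rho)|^2 \lesssim \rho \in L^1$. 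H\"older's inequality on $A_r$ then produces, for each term, a product of two tails of global $L^p$-norms, both of which vanish in the limit; the factor $r^{-1}$ is absorbed harmlessly by the annular volume. The force term is handled either by $G \in L^\infty$ or, in the $D_0^{1,q}$-case, by the Sobolev embedding $D^{1,q}_0 \hookrightarrow L^{q^*}$ paired with $\rho\bu \in L^{(q^*)'}$ (with the critical matching $q^* = 2,\,(q^*)' = 2$ occurring exactly at $q = 6/5$).

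The main technical obstacle I expect is the renormalized continuity equation for the pressure potential $P$: under the hypothesis $p(\rho) \lesssim \sqrt\rho$ one has $P'(\rho) \sim -\rho^{-1/2}$ near the origin, so $P$ is not $C^1$ up to $\rho = 0$ and Friedrichs' lemma does not apply directly to it. I would circumvent this by approximating $P$ by a sequence $P_k$ with bounded derivatives (agreeing with $P$ for $\rho \ge 1/k$), applying the commutator identity to the mollified continuity equation multiplied by $P_k'(\rho^\varepsilon)$, letting $\varepsilon \to 0$ for each fixed $k$ to obtain the renormalized equation for $P_k$, and finally sending $k \to \infty$ using the uniform bound $|P_k(\rho)|^2 \lesssim \rho$ inherited from the $\sqrt\rho$ control. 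A secondary care point is to make the density argument underlying the choice of test field $\bu\,\varphi_{r,R}$ compatible with all four terms simultaneously, which is where the precise lower bound $q \ge 6/5$ becomes necessary.
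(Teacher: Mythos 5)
Your proposal is correct, and it reaches the same energy identity $\int_\Omega \mathbb S(\nabla \bu):\nabla\bu\,dx = 0$ as the paper, but by a genuinely different route: you localize with a cutoff $\varphi_{r,R}$ and send $r\to\infty$, whereas the paper works entirely globally. Concretely, the paper observes that $\rho\bu\otimes\bu\in L^1\cap L^3$, $p(\rho)\in L^2\cap L^\infty$, $\mathbb S(\nabla\bu)\in L^2$ and $\rho\nabla G\in L^{6/5}$ together make the momentum form \eqref{E16} continuous on $D^{1,2}_0(\Omega;\bR^3)$, so one may plug in $\bfvarphi=\bu$ directly; similarly, $\rho\bu\in L^1\cap L^6$ lets it plug $\phi=\tfrac12|\bu|^2+G$ into the extended mass equation, killing the convective and potential contributions in one stroke with no boundary terms to estimate. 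Your Caccioppoli-type argument instead produces a local energy identity with annular flux terms, whose vanishing you establish with the same integrability ingredients (chiefly $p(\rho),P(\rho)\in L^2$, which is where $p(\rho)\lesssim\sqrt\rho$ enters, together with $\bu\in L^6$ and $\rho\in L^1\cap L^\infty$); these estimates are correct but constitute extra work relative to the paper. A second difference is the treatment of the pressure flux: you renormalize through the potential $P$, circumventing the singularity of $P'$ at $\rho=0$ by a truncation $P_k$, while the paper sidesteps $P$ entirely and renormalizes through a monotone truncation $p_n\in C_c(0,\infty)$ of $p$ itself, with $B_n(\rho)=B(1)+\int_1^\rho p_n(z)z^{-2}dz$; these truncations play the same role, but the paper's choice lands directly on $\int_\Omega p_n(\rho)\divg\bu\,dx=0$ and then passes to the limit by dominated convergence, which is slightly leaner. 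Both strategies are sound; the global version used in the paper is shorter because every annular tail estimate you perform is already implicit in the statement that the weak forms extend continuously to $\bu$ and $\tfrac12|\bu|^2+G$ as test functions.
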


\begin{remark}

We can also prove Theorem \ref{T1} under the assumption that $p$ satisfies \eqref{E13} and {$p \in C^1[0,\infty)$} since this, 
{together with \eqref{E18}},  imply $p(\rho) \in L^1 \cap L^{\infty}(\Omega)$.

\end{remark}

Next, we introduce the total energy
\[
E(\rho, \bu) = \frac{1}{2} \rho |\bu|^2 + P(\rho),
\]
where $P$ is the pressure potential defined in \eqref{E14}.

We consider the case when the energy is finite.

\begin{theorem}
\label{T11}
Let $\Omega \subset \bR^3$ be a domain and let $(\rho, \bu)$ be a weak solution to \eqref{E11} and \eqref{E12}.
Suppose $p(\rho)$ satisfies \eqref{E13}, $p \in C^1(0,\infty)$, and $p'(\rho) \ge 0$ for all $\rho \ge 0$.
Assume further that
\begin{equation}
\label{E181}
\liminf_{\rho \to 0^{+}} \frac{P(\rho)}{p(\rho)} > 0, \quad \liminf_{\rho \to 0^{+}} \frac{p(\rho)}{\rho^{\gamma}} > 0
\end{equation}
for some $\gamma > 1$.
Suppose
\begin{equation}
\label{E182}
\rho \in L^{\infty}(\Omega), \quad \int_\Omega E(\rho, \bu) dx < \infty,
\end{equation}
and $G \in D_0^{1, q}(\Omega)$ for some $\frac{6}{5} \le q \le \frac{6\gamma}{5\gamma-6}$ if $\gamma > \frac{6}{5}$ and $G \in D_0^{1, q}(\Omega)$ for some $6/5 \le q < \infty$ or $G \in W^{1, \infty}(\Omega)$ if $1 < \gamma \le \frac{6}{5}$.
Then
\[
\bu = 0 \ \mbox{a.e. in}\ \Omega. 
\]
\end{theorem}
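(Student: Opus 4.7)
The plan is to test the weak momentum balance \eqref{E16} with $\bfvarphi = \bu\,\varphi_{R,2R}$, where $\varphi_{R,2R}$ is the radial cut-off from Section \ref{S2}, and to send $R\to\infty$ so as to extract $\int_{\Omega}\mathbb S(\nabla\bu):\nabla\bu\,dx = 0$. Because $\bu \in D^{1,2}_0(\Omega;\bR^3)$ only, this choice is first legitimated by a density argument, using $\bu \in L^6$, $\rho \in L^\infty$, and the finite-energy bounds $\rho|\bu|^2, P(\rho), p(\rho) \in L^1(\Omega)$ to pass to the limit under the integrals.

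A preliminary ingredient is a renormalized continuity equation. Extending $(\rho,\bu)$ by zero to $\bR^3$ (which preserves \eqref{E15}), Friedrichs' Lemma \ref{L21} applies with $\beta = \infty$, $q = 2$, and standard renormalization with $b = P$ (admissible because $\rho \in L^\infty$ and the hypotheses on $p$ give sufficient control of $P'$ on $[0, \|\rho\|_\infty]$) yields
\[
\divg\bigl(P(\rho)\bu\bigr) + p(\rho)\,\divg\bu = 0 \quad\text{in}\ \mathcal D'(\Omega).
\]
Using \eqref{E15} with $\phi = |\bu|^2\varphi_{R,2R}$ and $\phi = G\,\varphi_{R,2R}$ (both admissible, by approximation, since $\rho\bu \in L^6_{\loc}$ and the test functions lie in a suitable $W^{1,s}_c$) together with the renormalized equation above, the convective, pressure, and force terms in the tested \eqref{E16} all collapse to boundary-type integrals supported on $A_R:=B(2R)\setminus B(R)$. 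After rearrangement one arrives at
\[
\int_\Omega \mathbb S(\nabla\bu):\nabla\bu\,\varphi_{R,2R}\,dx = \int_\Omega \Bigl[\tfrac{1}{2}\rho|\bu|^2 + P(\rho) + p(\rho) - G\rho\Bigr]\,\bu\cdot\nabla\varphi_{R,2R}\,dx - \int_\Omega \mathbb S(\nabla\bu):(\bu\otimes\nabla\varphi_{R,2R})\,dx.
\]

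Each remainder on the right is supported on $A_R$, carries a factor $|\nabla\varphi_{R,2R}|\lesssim R^{-1}$, and I would drive them to zero as follows. The kinetic remainder uses $\rho|\bu|^3 \in L^{6/5}(\Omega)$ (from $\rho|\bu|^3 \leq \|\rho\|_\infty^{1/2}(\rho|\bu|^2)^{1/2}|\bu|^2$ together with $L^2\cdot L^3\subset L^{6/5}$); the pressure remainder uses $P(\rho), p(\rho) \in L^1\cap L^\infty$ (the first bound in \eqref{E181} gives $p\lesssim P$ near $0$, and $\rho\in L^\infty$ with continuity closes the argument); the viscous remainder is bounded by $R^{-1}\|\nabla\bu\|_{L^2(A_R)}\|\bu\|_{L^6}|A_R|^{1/3}\lesssim \|\nabla\bu\|_{L^2(A_R)}\to 0$. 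The hard part will be the force remainder $R^{-1}\int_{A_R}|G|\rho|\bu|$. The second bound in \eqref{E181} combined with $\int_\Omega P(\rho) < \infty$ implies $\rho^\gamma\in L^1(\Omega)$ (sublevel sets $\{\rho\geq\delta\}$ have finite measure by $P(\rho)\geq P(\delta)$ there), so $\rho\bu\in L^{6\gamma/(6+\gamma)}\cap L^6$; for $\gamma>6/5$, the upper bound $q\leq 6\gamma/(5\gamma-6)$ is precisely the threshold allowing H\"older to pair $G\in L^{q^*}$ (by Sobolev) with $\rho\bu \in L^{(q^*)'}$, while for $1<\gamma\leq 6/5$ one has $\rho\bu\in L^1$ directly, so either $G\in L^{q^*}$ for any $q\geq 6/5$ suffices or the Lipschitz growth $|G|\lesssim 1+|x|$ in the $W^{1,\infty}$ case is absorbed by $R^{-1}$. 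Once all four remainders vanish, monotone convergence on the left gives $\int_\Omega \mathbb S(\nabla\bu):\nabla\bu\,dx = 0$; the coercivity of the bilinear form \eqref{E22} then yields $\nabla\bu = 0$, and $\bu \in D^{1,2}_0$ forces $\bu = 0$ a.e.\ in $\Omega$.
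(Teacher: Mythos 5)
Your proposal is correct, but it follows a genuinely different route from the paper's own proof. The paper first derives global integrability from \eqref{E181}--\eqref{E182} (namely $\rho\in L^{\gamma}\cap L^{\infty}$, $\rho\bu\in L^{q(\gamma)}\cap L^{6}$, $\rho\bu\otimes\bu\in L^{1}\cap L^{3}$, $p(\rho),P(\rho)\in L^{1}\cap L^{\infty}$, $\rho\nabla G\in L^{6/5}$, $\rho\bu\cdot\nabla G\in L^{1}$), then observes that these bounds let one extend the momentum identity \eqref{E16} to all $\bfvarphi\in D^{1,2}_{0}(\Omega;\bR^{3})$ and simply inserts $\bfvarphi=\bu$. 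The pressure term is killed by the renormalized continuity equation with $b=P$ (citing the DiPerna--Lions theory via \cite[Lemma 3.3]{MR2084891}), and the convective and force terms vanish via the mass balance, so $\int_{\Omega}\mathbb S(\nabla\bu):\nabla\bu\,dx=0$ falls out with no localization. You instead insert the truncated test function $\bu\,\varphi_{R,2R}$, reorganize using the same renormalized continuity equation and the mass balance with $\phi=\tfrac12|\bu|^2\varphi_{R,2R}$ and $\phi=G\,\varphi_{R,2R}$, obtain a local energy identity with four remainder integrals supported on the annulus $A_R$, and drive each to zero as $R\to\infty$. Both routes rest on the same integrability inputs and both require the same renormalization step; the paper's direct extension is shorter because it avoids the annular estimates altogether, while your truncation argument requires one additional pointwise fact to invoke monotone convergence on the left-hand side (namely $\mathbb S(\nabla\bu):\nabla\bu\ge 0$ a.e., which holds since $\mathbb S(\nabla\bu):\nabla\bu=2\mu|\mathbb D(\bu)|^2-\tfrac{2\mu}{3}(\divg\bu)^2+\eta(\divg\bu)^2\ge 0$). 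Your approach is closer in spirit to the cutoff technique used later in the proof of Theorem \ref{T3}, where the truncation is genuinely needed because global integrability is not available there; for Theorem \ref{T11} it is valid but more work than necessary.

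One small point of care: in the force-remainder estimate you should be explicit that the factor $|\nabla\varphi_{R,2R}|\lesssim R^{-1}$ together with $|A_R|\sim R^3$ is exactly what makes the borderline exponent $q=\tfrac{6\gamma}{5\gamma-6}$ work (giving a bounded but not decaying weight $R^{-1}|A_R|^{1/3}$, so the decay must come from $\|G\|_{L^{q^*}(A_R)}\|\rho\|_{L^{\gamma}(A_R)}\|\bu\|_{L^6(A_R)}\to 0$), and in the $W^{1,\infty}$ case the linear growth $|G|\lesssim 1+|x|$ is compensated by $R^{-1}$ precisely because $\rho\bu\in L^1(\Omega)$ (which in turn uses $1<\gamma\le 6/5$ so that $\rho\in L^{\gamma}\cap L^{\infty}\subset L^{6/5}$). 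Your sketch has the right ingredients; spelling out these exponent checks would make the argument airtight.
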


Next, we consider the very weak solutions defined on $\Omega = \bR^3$.

\begin{theorem}
\label{T2}
Let $\Omega = \bR^3$ and let $(\rho, \bu)$ be a very weak solution to \eqref{E11} and \eqref{E12}.
Suppose $p(\rho)$ satisfies \eqref{E13} and $p \in C^1(0,\infty)$ and $G$ satisfies either $G \in D_0^{1, q}(\bR^3)$ for some $3/2 < q < \infty$ or $G \in W^{1, \infty}(\bR^3)$ and 
\begin{equation}
\label{E19}
\rho \in L^1 \cap L^{\infty} (\bR^3) \qand \bu \in L^{\infty}(\bR^3; \bR^3).
\end{equation}
Then we have the following:
\begin{enumerate}
\item
If $G=0$, then $\bu$ is equal to a constant vector field $\bu_{\infty}$ a.e.
Moreover, either $\bu_{\infty} = 0$ or $\rho = 0$ a.e.
\item
If $\text{ess} \limsup_{|x| \to \infty} |\bu(x)| = 0$, then $\bu = 0$ a.e. 
\end{enumerate}
\end{theorem}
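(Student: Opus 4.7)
My plan is to run a global energy-identity argument: establish enough regularity for $\bu$, use Friedrichs' commutator lemma to renormalize the continuity equation, then test the momentum balance against a suitable mollification of $\bu$ cut off to a large ball. After passing to the limit, the dissipation $\int \mathbb S(\nabla \bu):\nabla \bu\,dx$ will be shown to vanish, forcing $\nabla \bu \equiv 0$ and hence $\bu \equiv \bu_\infty$ for a constant vector. The continuity equation together with $\rho \in L^1(\bR^3)$ (for part~(1)) or the far-field hypothesis (for part~(2)) will then identify $\bu_\infty$.

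From \eqref{E16a}, the very weak solution $\bu$ satisfies
\[
-\mu \Delta \bu - (\mu/3 + \eta)\nabla \divg \bu = -\divg(\rho\bu\otimes \bu) - \nabla p(\rho) + \rho \nabla G
\]
in $D'(\bR^3)$. Under \eqref{E19} and $p \in C^1(0,\infty) \cap C[0,\infty)$, the first two source terms lie in $L^\infty$, while $\rho \nabla G \in L^q$ for some $q > 3/2$ (or $L^\infty$) by the hypothesis on $G$. Since the Lam\'e operator has positive-definite constant symbol $\mu|\xi|^2 I + (\mu/3+\eta)\xi\otimes \xi$, elliptic regularity (applying Lemma \ref{L7} componentwise after a Helmholtz--Lam\'e splitting) gives $\bu \in W^{1,r}_{\loc}(\bR^3;\bR^3)$ for every $r<\infty$. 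Lemma \ref{L21} then yields the renormalized continuity
\[
\divg(b(\rho)\bu) + [b'(\rho)\rho - b(\rho)]\divg\bu = 0 \quad \text{in } D'(\bR^3)
\]
for every $b \in C^1 \cap W^{1,\infty}$; taking $b = P$ (after a smooth truncation of $P$ near $\rho = 0$, removed via $\rho \in L^1 \cap L^\infty$) gives the identity $p(\rho)\divg \bu = -\divg(P(\rho)\bu)$, which is precisely what is needed to convert the pressure contribution into a boundary flux.

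Testing \eqref{E16a} against a mollification of $\bu \varphi_R$, where $\varphi_R = \varphi_{R,2R}$ is the cutoff of Section \ref{S2}, using the renormalized identity to rewrite $\int p(\rho)\varphi_R \divg \bu\,dx$, and integrating the force term by parts against $\divg(\rho\bu)=0$, yields
\[
\int_{\bR^3} \varphi_R \mathbb S(\nabla \bu) : \nabla \bu \, dx = \int_{\bR^3} \Big(\tfrac12 \rho|\bu|^2 \bu + (P(\rho)+p(\rho))\bu\Big)\cdot \nabla \varphi_R\,dx - \int_{\bR^3} \mathbb S(\bu \otimes \nabla \varphi_R):\nabla \bu\,dx - \int_{\bR^3} G\rho\bu \cdot \nabla \varphi_R\,dx.
\]
The flux with $\rho|\bu|^3 \in L^1$ vanishes as $R \to \infty$; the forcing flux vanishes because $G\rho\bu \in L^1(\bR^3)$ by the hypothesis on $G$ (and is absent in part~(1)); the viscous flux $R^{-1}\int_{B(2R)\setminus B(R)}|\bu||\nabla \bu|\,dx$ vanishes via integrated far-field decay of $|\nabla \bu|$, itself extracted from the Lam\'e equation with divergence-type source $\divg(\rho \bu \otimes \bu)$ concentrated in $L^1$. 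The pressure fluxes are the delicate point (see below). Once all fluxes vanish, $\int \mathbb S(\nabla \bu):\nabla \bu\,dx = 0$, so $\nabla \bu \equiv 0$ and $\bu \equiv \bu_\infty$ a.e. For part~(1), \eqref{E15a} reduces to $\bu_\infty \cdot \nabla \rho = 0$ in $D'(\bR^3)$, and $\rho \in L^1(\bR^3)$ is incompatible with invariance along a nonzero direction unless $\rho \equiv 0$. For part~(2) the far-field hypothesis forces $\bu_\infty = 0$ directly.

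The main obstacle I anticipate is the pressure boundary flux: the bound $P(\rho), p(\rho) \in L^\infty$ is inadequate against the geometric factor $R^{-1}|B(2R)\setminus B(R)| \sim R^2$, so one must either (i) modify the test function by subtracting a Bogovskii-type correction that kills the divergence part of $\bu\varphi_R$ on the annulus, eliminating the pressure contribution entirely, or (ii) exploit the integrated smallness of $\rho$ at infinity coming from $\rho \in L^1(\bR^3)$ combined with the uniform continuity of $P, p$ on $[0,\|\rho\|_\infty]$ (with $p(0)=0$) to prove $R^{-1}\int_{B(2R)\setminus B(R)} (P+p)(\rho)|\bu|\,dx \to 0$. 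A secondary issue is justifying the test-function identity rigorously in the very weak class, requiring the mollification of $\bu$ together with the $W^{1,r}_{\loc}$-regularity supplied by the first step.
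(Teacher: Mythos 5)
Your plan does not match the paper's argument, and there is a genuine gap at the heart of it: you never establish that $\nabla\bu$ is globally in $L^2(\bR^3)$, and without that your cutoff-energy identity cannot close. The paper's key idea, which you are missing, is to introduce an auxiliary field: since $\rho\bu\otimes\bu$, $p(\rho)$, $\rho\nabla G$ have the right integrability, the map $\bfvarphi \mapsto \int[\rho\bu\otimes\bu:\nabla\bfvarphi + p(\rho)\divg\bfvarphi]\,dx + \int\rho\nabla G\cdot\bfvarphi\,dx$ is a bounded functional on the Hilbert space $D^{1,2}_0(\bR^3;\bR^3)$, so Riesz representation produces a unique $\tilde\bu\in D^{1,2}_0$ satisfying the \emph{weak} momentum balance with the same data. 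Subtracting \eqref{E41} from \eqref{E16a}, the difference $\bu-\tilde\bu\in L^\infty$ is a very weak solution of the \emph{homogeneous} Lam\'e system, to which Lemma~\ref{L7} (which is stated only for $D_j(a_{ij}D_i\bu)=0$, not for inhomogeneous equations) actually applies; local elliptic regularity then upgrades $\bu-\tilde\bu$ to a smooth bounded solution, and Weck's Liouville theorem (Lemma~\ref{L2}) forces $\bu-\tilde\bu$ to be a constant vector. At that point $\nabla\bu=\nabla\tilde\bu\in L^2(\bR^3)$ for free, and the energy identity is obtained by testing against $\tilde\bu$ itself, not against a cutoff of $\bu$.

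In contrast, your proposal applies Lemma~\ref{L7} directly to $\bu$ via a "Helmholtz--Lam\'e splitting" on the inhomogeneous equation, which is not what that lemma states (it requires a homogeneous divergence-form equation), and even if you obtained $\bu\in W^{1,r}_{\loc}$ this way, the cutoff argument needs to kill the viscous flux $R^{-1}\int_{B(2R)\setminus B(R)}|\bu||\nabla\bu|\,dx$. You acknowledge this and invoke "integrated far-field decay of $|\nabla\bu|$... extracted from the Lam\'e equation," but nothing in the hypotheses \eqref{E19} gives such decay without first proving $\nabla\bu\in L^2(\bR^3)$ — which is precisely what the flux argument was supposed to deliver. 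So the argument is circular as written. (Your concerns about the pressure flux, by contrast, are not the real obstruction: $p(\rho),P(\rho)\in L^1(\bR^3)$ under the stated hypotheses, so $R^{-1}\int_{B(2R)\setminus B(R)}(P+p)(\rho)|\bu|\,dx\to 0$ directly since $\bu\in L^\infty$ and the tail of an $L^1$ function vanishes.) Finally, the renormalization of the continuity equation (Friedrichs' Lemma~\ref{L21}) requires $\bu\in W^{1,q}_{\loc}$, which in the very weak class you do not have a priori; the paper legitimately obtains this regularity only after the $\tilde\bu$-subtraction and Liouville step, whereas your proposal assumes it before closing the energy argument. The missing idea is the auxiliary $\tilde\bu\in D^{1,2}_0$ together with the Liouville theorem for the difference; without it, the direct cutoff approach does not go through.
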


\begin{remark}
\begin{itemize}
\item
Recently, authors in \cite{MR2914143}, \cite{MR3223826}, \cite{MR3843580}, and \cite{MR4178952} have studied the Liouville-type theorems for \eqref{E11} and $\eqref{E12}$ in $\bR^3$ when $(\rho, \bu)$ is smooth, $G$ is constant, and $p(\rho) = a \rho^{\gamma}$ with $a > 0 $ and $\gamma > 1$.
In particular, \cite{MR2914143} proved the Liouville theorem when $\rho \in L^{\infty}(\bR^3)$, $\nabla \bu \in L^2(\bR^3; \bR^{3 \times 3})$, and $\bu \in L^{\frac{3}{2}}(\bR^3; \bR^3)$.
In \cite{MR3223826}, the authors proved the Liouville theorem when $\rho \in L^{\infty}(\bR^3)$, $\nabla \cdot \bu \in L^2(\bR^3)$, and $\bu \in L^{\frac{9}{2}}(\bR^3; \bR^3)$.
They also obtained the theorem when $\rho \in L^{\infty}(\bR^3)$, and $\bu \in L^{\frac{3}{2}} \cap L^{\frac{9}{2}}(\bR^3; \bR^3)$.
In \cite{MR3843580}, the authors proved the Liouville theorem when $\rho \in L^{\infty}(\bR^3)$, $\nabla \bu \in L^2(\bR^3; \bR^{3 \times 3})$, and $\bu \in L^{\frac{9}{2}, \infty}(\bR^3; \bR^3)$.
In \cite{MR4178952}, the authors proved the Liouville theorem when $\rho \in L^{\infty}(\bR^3)$, $\nabla \bu \in L^2(\bR^3; \bR^{3 \times 3})$, and $\bu \in L^{p,q}(\bR^3; \bR^3)$ for $3 < p < \frac{9}{2}$, $3 \leq q \leq \infty$ or $p=q=3$.
\end{itemize}
\end{remark}

Finally, for the isentropic case, where $p(\rho)$ is expressed as $\rho^{\gamma}$, $L^{\infty}$ conditions in Theorem \ref{T2} can be relaxed to some integrability conditions.
For simplicity, we shall denote
\[
f(\alpha, \gamma) = 
\begin{cases}
\alpha - \frac{3}{2\gamma} &\quad \text{if} \quad  \alpha \ge \frac{2\gamma-3}{\gamma(\gamma-1)}, \\
\frac{\alpha}{\gamma} + \frac{1}{2\gamma} - \frac{3}{\gamma^2} &\quad \text{if} \quad 0 \le \alpha < \frac{2\gamma-3}{\gamma(\gamma-1)},
\end{cases}
\]
if $\gamma \ge 6$ and
\[
f(\alpha, \gamma) = 
\begin{cases}
\alpha - \frac{3}{2\gamma} &\quad \text{if} \quad  \alpha \ge \frac{3}{2(\gamma-1)}, \\
\frac{\alpha}{\gamma} &\quad \text{if} \quad \frac{1}{2(\gamma-1)} \le \alpha < \frac{3}{2(\gamma-1)}, \\
\frac{\alpha}{\gamma^2} + \frac{1}{2\gamma^2} &\quad \text{if} \quad 0 \le \alpha < \frac{1}{2(\gamma-1)},
\end{cases}
\]
if $3 < \gamma < 6$ and
\[
f(\alpha, \gamma) = 
\begin{cases}
\alpha - \frac{1}{\gamma} &\quad \text{if} \quad  \alpha \ge \frac{3}{2(\gamma-1)}, \\
\frac{\alpha}{\gamma} + \frac{1}{2\gamma} &\quad \text{if} \quad \frac{1}{2(\gamma-1)} \le \alpha < \frac{3}{2(\gamma-1)}, .
\end{cases}
\]
if $\frac{3}{2} < \gamma < 3$.

Then, we define  
\begin{align}
\begin{split}
\label{E110}
g(\alpha, \beta, \gamma) 
=
\begin{cases}
\frac{\gamma (1-f(\alpha, \gamma))(6-\beta)}{3\beta (\gamma-1)} \quad & \text{if} \quad f(\alpha, \gamma) \le \frac{\gamma-3}{\gamma(3\gamma-5)}, \\
\frac{(1-\gamma f(\alpha, \gamma))(6-\beta)}{2\beta} \quad & \text{if} \quad f(\alpha, \gamma) > \frac{\gamma-3}{\gamma(3\gamma-5)}.
\end{cases}
\end{split}
\end{align}

\begin{theorem}
\label{T3}
Let $\Omega = \bR^3$ and let $(\rho, \bu)$ be a weak solution of \eqref{E11} and \eqref{E12} with $p(\rho) = \rho^{\gamma}$ and $\gamma \ge 3$.
Suppose $1 \le \beta \le 2$, $0 \le \alpha \le \frac{5}{2\gamma}$.
Then $\bu = 0$ a.e. if $G \in D_{0}^{1,\frac{6\gamma}{5\gamma-6}}(\bR^3)$, 
\begin{equation}
\label{E111}
\sup_{R >1} R^{-\alpha} \norm{\rho}_{L^{\gamma}(B(R))} < \infty
\qand 
\liminf_{R \to \infty} R^{-g(\alpha, \beta, \gamma)} \norm{\bu}_{L^{\beta}(B(2R) \setminus B(R))} < \infty.
\end{equation}
\end{theorem}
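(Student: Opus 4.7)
The plan is to derive a localized energy identity by testing the weak momentum equation~\eqref{E16} with $\bfvarphi = \bu\,\varphi_{R,2R}$ (taking $r=R$ in the cutoff convention of Section~\ref{S2}), extended by density from smooth compactly supported test functions, and then to pass to the limit $R\to\infty$ along the subsequence furnished by the $\liminf$ in~\eqref{E111}. The key a priori ingredient is that $\bu\in D^{1,2}_0(\bR^3)$ automatically implies $\nabla\bu\in L^2(\bR^3)$ and, by Sobolev embedding, $\bu\in L^6(\bR^3)$; in particular $\|\nabla\bu\|_{L^2(A_R)}\to 0$ and $\|\bu\|_{L^6(A_R)}\to 0$ as $R\to\infty$, where $A_R:=B(2R)\setminus B(R)$.

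Expanding the convective term and invoking the continuity equation~\eqref{E15} together with its renormalization by $b(\rho)=P(\rho)=\rho^{\gamma}/(\gamma-1)$ (legitimate since $\rho\in L^{\gamma}_{\loc}\subset L^{2}_{\loc}$ for $\gamma\ge 3$, via Lemma~\ref{L21}), the interior contributions $\int\rho\,\bu\cdot\nabla(\tfrac{1}{2}|\bu|^2)\varphi$ and $\int p(\rho)\,\divg\bu\,\varphi$ collapse into purely annular integrals, leading to
\[
\int_{\bR^3}\mathbb{S}(\nabla\bu):\nabla\bu\,\varphi_{R,2R}\,dx = J_1(R) + J_2(R) + J_3(R) + J_4(R),
\]
with $J_1 = \tfrac12\int_{A_R}\rho|\bu|^2\bu\cdot\nabla\varphi_{R,2R}\,dx$, $J_2 = \tfrac{\gamma}{\gamma-1}\int_{A_R}\rho^{\gamma}\bu\cdot\nabla\varphi_{R,2R}\,dx$, $J_3 = -\int_{A_R}\mathbb{S}(\nabla\bu):(\bu\otimes\nabla\varphi_{R,2R})\,dx$, and $J_4 = -\int_{\bR^3}\rho\,\nabla G\cdot\bu\,\varphi_{R,2R}\,dx$.

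I next estimate each $J_i$ by Hölder, the Sobolev inequality on $A_R$, and Lebesgue-space interpolation of $\bu$ between $L^{\beta}(A_R)$ and $L^{6}(A_R)$. Together with the hypotheses $\|\rho\|_{L^\gamma(B(R))}\lesssim R^\alpha$ and $\|\bu\|_{L^\beta(A_R)}\lesssim R^{g(\alpha,\beta,\gamma)}$ along the subsequence, the quantity $f(\alpha,\gamma)$ emerges as the critical exponent for interpolating $\rho^{\gamma}$ into the Lebesgue space dual to that in which $\bu$ appears in $J_1$ and $J_2$; its piecewise form reflects which of the two competing constraints on $\rho$ — the direct $L^{\gamma}$ bound versus a volume-derived $L^{p}$ bound with $p<\gamma$ — is sharper in each range of $\alpha$, while the three subcases $\gamma\ge 6$, $3<\gamma<6$ and $\tfrac32<\gamma<3$ record where the $L^{6}$ Sobolev ceiling truncates the available interpolation. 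The exponent $g(\alpha,\beta,\gamma)$ is then calibrated so that, after absorbing the factor $|\nabla\varphi_{R,2R}|\lesssim R^{-1}$, each of $J_1, J_2, J_3$ is bounded in $R$ and in fact tends to zero, thanks to the vanishing of $\|\nabla\bu\|_{L^{2}(A_R)}$ and $\|\bu\|_{L^{6}(A_R)}$. For $J_4$, the choice $q=6\gamma/(5\gamma-6)$ is dictated by the identity $\tfrac1\gamma+\tfrac1q+\tfrac16 = 1$; since $\nabla G\in L^{q}(\bR^3)$ and $\bu\in L^{6}(\bR^3)$, splitting the integral inside and outside a large fixed ball $B(r_0)$ and exploiting the decay $\|\nabla G\|_{L^{q}(\bR^3\setminus B(r_0))}\|\bu\|_{L^6(\bR^3\setminus B(r_0))}\to 0$ (balanced against the polynomial growth of $\|\rho\|_{L^{\gamma}(B(R))}$) controls $J_4$.

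Passing to the $\liminf$ along the subsequence yields $\int_{\bR^3}\mathbb{S}(\nabla\bu):\nabla\bu\,dx = 0$. As this quadratic form is precisely the inner product on $D^{1,2}_0(\bR^3;\bR^3)$ defined in~\eqref{E22}, this forces $\bu = 0$ a.e. The main obstacle I expect is the interpolation bookkeeping producing the piecewise formulae for $f$ and $g$: each branch corresponds to a regime in which either $J_1$ (convective) or $J_2$ (pressure) is the dominant annular term and a different range of interpolation exponents becomes available, and one must verify that the stated $g(\alpha,\beta,\gamma)$ is the correct critical scale \emph{simultaneously} across all cases of $f(\alpha,\gamma)$ and all three regimes of $\gamma$.
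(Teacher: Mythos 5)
Your outline captures the broad strategy of testing with $\bu\,\varphi_{R,2R}$ and sending $R\to\infty$ along the subsequence from the $\liminf$, but it is missing the single most substantial step of the paper's argument: the \emph{higher-integrability bootstrap} of Propositions~\ref{P1}, \ref{P2}, \ref{P3} (and Corollaries~\ref{C1}--\ref{C2}), which uses the Bogovskii operator to upgrade the hypothesis $\sup_R R^{-\alpha}\|\rho\|_{L^\gamma(B(R))}<\infty$ to the genuinely stronger estimate
\[
\sup_{R>1} R^{-f(\alpha,\gamma)}\|\rho\|_{L^{2\gamma}(B(R))}<\infty.
\]
You present $f(\alpha,\gamma)$ as ``the critical exponent for interpolating $\rho^\gamma$ into the Lebesgue space dual to that in which $\bu$ appears,'' but $f$ cannot be produced by interpolation of the given $L^\gamma$ bound: from $\rho\in L^\gamma_{\rm loc}$ one only knows $\rho^\gamma\in L^1_{\rm loc}$, which is not enough to run H\"older against $\bu\in L^6$ and $|\nabla\varphi_{R,2R}|\lesssim R^{-1}$ in your $J_2$, nor to give the paper's term $R^{-1}\|\rho\|_{L^{2\gamma}(A_R)}^\gamma\|\bu\|_{L^2(A_R)}$ in \eqref{E66}. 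The exponent $f$ is not an interpolation exponent at all; it is the outcome of the Moser-type iteration $2\gamma_{n+1}a_{n+1} = \alpha\gamma - \tfrac32 + \gamma_n a_n$ (and its variants for $3\le\gamma<6$) driven by repeatedly testing \eqref{E16} with Bogovskii solutions $\bfphi_n=\bog\bigl((\rho^{\gamma_n})^\ep\varphi_{R_n,R_{n-1}}-\fint(\rho^{\gamma_n})^\ep\varphi_{R_n,R_{n-1}}\bigr)$.

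The same gap undermines your use of the renormalized continuity equation. To renormalize with $b(\rho)=\rho^\gamma/(\gamma-1)$ one must have $\rho^\gamma\,\divg\bu\in L^1_{\rm loc}$; with only $\rho^\gamma\in L^1_{\rm loc}$ and $\divg\bu\in L^2_{\rm loc}$ this product is not integrable without the upgraded $\rho\in L^{2\gamma}_{\rm loc}$. The paper in fact avoids invoking the renormalized equation directly here and instead regularizes: it writes $\divg(\rho^\ep\bu)=r_\ep$, invokes Lemma~\ref{L21} to send $r_\ep\to 0$ in $L^{2\gamma/(\gamma+1)}_{\rm loc}$, and passes to the limit in \eqref{E64} using the $L^{2\gamma}$ bound on $\rho$ — note the commutator term $\|r_\ep\|_{L^{2\gamma/(\gamma+1)}}\|\rho^\ep\|_{L^{2\gamma}}^{\gamma-1}$ needs exactly $\rho\in L^{2\gamma}_{\rm loc}$. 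So your proof as written neither establishes nor uses the higher integrability, and the H\"older/interpolation estimates on $J_1$, $J_2$, $J_4$ cannot close without it. To repair the argument you would need to insert the full Bogovskii iteration before the energy identity step, at which point it would coincide with the paper's proof.
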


We can remove the assumption on $\bu$ by imposing an additional assumption on $\rho$.
We first consider the case $\gamma \ge 6$.

\begin{theorem}
\label{T21}
Let $\Omega = \bR^3$ and let $(\rho, \bu)$ be a weak solution to \eqref{E11} and \eqref{E12} with $p(\rho) = \rho^{\gamma}$ and $\gamma \ge 6$.
Suppose $G \in D_{0}^{1,q}(\bR^3)$ for some $\frac{6\gamma}{5\gamma-6} \le q \le \frac{3}{2}$.
Then $\bu = 0$ a.e. if
\begin{equation}
\label{E}
\rho \in L^{6} \cap L^{\gamma} (\bR^3),
\end{equation}
\end{theorem}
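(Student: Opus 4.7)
The plan is the classical cut-off energy method for Liouville-type results for the compressible Navier--Stokes system. I would test the weak momentum equation \eqref{E16} with $\bfvarphi = \bu \varphi_{R,2R}$, turn the identity into an energy equality by using the continuity equation \eqref{E15} several times, and then let $R \to \infty$. Because $\bu \in D^{1,2}_0(\bR^3;\bR^3) \hookrightarrow L^6(\bR^3;\bR^3)$, the limiting identity $\int \mathbb S(\nabla \bu) : \nabla \bu = 0$ combined with Korn forces $\nabla \bu = 0$, and then $\bu = 0$ a.e.\ follows from $\bu \in L^6$.

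Three uses of \eqref{E15} organize the computation. First, tested against $\tfrac12 |\bu|^2 \varphi$ (via a mollified density and Lemma \ref{L21}), it gives $\int \rho \bu \otimes \bu : \nabla \bu \,\varphi = -\tfrac12 \int \rho |\bu|^2 \bu \cdot \nabla \varphi$. Second, the renormalized identity $\divg(P(\rho)\bu) = -p(\rho) \divg \bu$, which follows from \eqref{E14} together with continuity and is justified rigorously through Friedrichs' Lemma \ref{L21} applied to $\rho^{\ep}$, converts $\int p(\rho) \varphi \divg \bu$ into $\int P(\rho) \bu \cdot \nabla \varphi$. Third, tested against $G\varphi$, the continuity equation rewrites the forcing as $\int \rho \nabla G \cdot \bu \varphi = -\int \rho G\, \bu \cdot \nabla \varphi$. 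The net outcome is an energy identity in which every right-hand side integrand carries the factor $\nabla \varphi$ and is hence supported on the annulus $A_R := B(2R) \setminus B(R)$, with $|\nabla \varphi| \lesssim R^{-1}$.

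Next I would estimate the four annulus integrals. The assumed integrability yields $\|\rho\|_{L^6(A_R)}$, $\|\rho\|_{L^\gamma(A_R)}$, $\|\bu\|_{L^6(A_R)}$, $\|\nabla \bu\|_{L^2(A_R)}$, and $\|G\|_{L^{q^*}(A_R)}$ all $\to 0$, where $q^* = 3q/(3-q)$ lies in $[2\gamma/(\gamma-2),3]$. The convective term is bounded by $R^{-1}\|\rho\|_{L^6(A_R)} \|\bu\|_{L^{18/5}(A_R)}^3$, with the $L^{18/5}$ norm obtained from $L^6$ by volume interpolation. The viscous boundary term is split by Young's inequality, with a small fraction absorbed by the coercive left-hand side. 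The forcing is controlled by $R^{-1}\|\rho\|_{L^6(A_R)}\|\bu\|_{L^6(A_R)}\|G\|_{L^{3/2}(A_R)}$, where $\|G\|_{L^{3/2}(A_R)}$ is pulled out of $\|G\|_{L^{q^*}(A_R)}$ by Hölder-in-volume; since $1-3/q^* \le 0$ for $q^*\le 3$ the resulting power of $R$ is harmless.

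The main obstacle is the pressure contribution $R^{-1}\int_{A_R} [P(\rho)+p(\rho)]|\bu| \lesssim R^{-1} \int_{A_R} \rho^\gamma |\bu|$. Direct H\"older $\|\rho^\gamma\|_{L^{6/5}(A_R)} \|\bu\|_{L^6(A_R)}$ would require $\rho \in L^{6\gamma/5}$, which is strictly stronger than $L^6 \cap L^\gamma$ when $\gamma>6$, because $6\gamma/5 > \gamma$ and interpolation between the two endpoints only gives $L^s$ for $s\in[6,\gamma]$. My workaround is to subtract the mean on the annulus, $\bu = \bar\bu_{A_R} + (\bu - \bar\bu_{A_R})$: the constant part is harmless because $|\bar\bu_{A_R}| \lesssim R^{-1/2}\|\bu\|_{L^6(A_R)}$, so $R^{-1}|\bar\bu_{A_R}|\|\rho\|_{L^\gamma(A_R)}^\gamma$ vanishes rapidly; for the oscillation I invoke the scale-invariant Poincar\'e--Sobolev bound $\|\bu-\bar\bu_{A_R}\|_{L^6(A_R)} \lesssim \|\nabla \bu\|_{L^2(A_R)}$, and then control $\int_{A_R} \rho^\gamma |\bu-\bar\bu_{A_R}|$ by a carefully chosen multi-factor H\"older that plays the $L^6$ and $L^\gamma$ ends of the density scale against $\nabla \bu \in L^2(A_R) = o(1)$. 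If this still falls short of the $L^{6\gamma/5}$ gap, the natural fallback is a Bogovskii correction --- replacing $\bu \varphi$ by $\bu \varphi + \bW$ with $\bW$ supported in $A_R$ and $\divg \bW$ equal to the mean-free part of $p(\rho)\varphi$ --- which cancels the leading pressure contribution and leaves only a lower-order remainder. Once the pressure term is shown to vanish, Korn's inequality and the Sobolev embedding close the proof and yield $\bu \equiv 0$.
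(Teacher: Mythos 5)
You correctly isolate the main obstacle: after testing with $\bu\varphi_{R,2R}$ and renormalizing the continuity equation, the surviving boundary pressure term $R^{-1}\int_{A_R}\rho^\gamma|\bu|$ would require $\rho^\gamma\in L^{6/5}(A_R)$, i.e.\ $\rho\in L^{6\gamma/5}$, which sits strictly above the interpolation range $[L^6,L^\gamma]$ for every $\gamma\ge 6$ (indeed $6\gamma/5>\gamma$). Unfortunately neither of your workarounds closes this gap. Subtracting the annular mean eliminates the non-decaying constant part of $\bu$, but the oscillation $\bu-\bar\bu_{A_R}$ is still only in $L^6(A_R)$ by Poincar\'e--Sobolev, so the conjugate exponent on $\rho^\gamma$ is again $6/5$; and splitting $\rho^\gamma=\rho^a\rho^{\gamma-a}$ to use both ends gives a H\"older budget $\tfrac{a}{6}+\tfrac{\gamma-a}{\gamma}+\tfrac16=1+a\bigl(\tfrac16-\tfrac1\gamma\bigr)\ge1$, so there is no slack for a helpful volume factor. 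The Bogovskii ``correction'' as you describe it (adding $\bW$ supported on $A_R$ with $\divg\bW$ equal to the mean-free part of $p(\rho)\varphi$) does not cancel anything: it adds a new, sign-definite $\int_{A_R}p(\rho)^2$-type contribution, together with fresh convective/viscous/forcing terms tested against $\bW$, none of which is smaller than what you started with.

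The missing idea is a \emph{separate} Bogovskii bootstrap for the density, carried out \emph{before} the energy identity rather than folded into it. This is Corollary~\ref{C1} in the paper: testing the momentum equation with $\bfphi_n=\bog\bigl((\rho^{\gamma_n})^{\ep}\varphi_{R_n,R_{n-1}}-\fint(\rho^{\gamma_n})^{\ep}\varphi_{R_n,R_{n-1}}\bigr)$ produces $\int\rho^{\gamma+\gamma_n}$ from the pressure, while $\rho\in L^6\cap L^\gamma$ and $\bu\in D^{1,2}_0$ control the remaining terms; iterating $\gamma_n\nearrow\gamma$ yields $\rho\in L^{2\gamma}(\bR^3)$. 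Once you have that, the cut-off argument you are fighting with becomes unnecessary: $\rho\in L^6$ and $\bu\in L^6$ give $\rho\bu\otimes\bu\in L^2$, $\rho\in L^{2\gamma}$ gives $\rho^\gamma\in L^2$, and the hypothesis on $G$ gives $\rho\nabla G\in L^{6/5}$, so the momentum balance \eqref{E16} extends directly to $\bfvarphi=\bu\in D^{1,2}_0(\bR^3;\bR^3)$. DiPerna--Lions renormalization then kills $\int\rho^\gamma\divg\bu$, the continuity equation kills $\int\rho\bu\cdot\nabla(\tfrac12|\bu|^2+G)$, and $\int\mathbb S(\nabla\bu):\nabla\bu=0$ follows, giving $\bu=0$. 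In short, your energy scaffolding and your identification of the obstruction are both correct, but the single cut-off test function cannot overcome the $L^{6\gamma/5}$ gap; the decisive step is the prior higher-integrability bootstrap of $\rho$.
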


We also consider when total energy is finite.

\begin{theorem}
\label{T22}
Let $\Omega = \bR^3$ and let $(\rho, \bu)$ be a weak solution to \eqref{E11} and \eqref{E12} with $p(\rho) = \rho^{\gamma}$ and $\gamma \ge 6$.
Then $\bu = 0$ a.e. if $G \in D_{0}^{1,\frac{6\gamma}{5\gamma-6}}(\bR^3)$ and
\begin{equation}
\label{E112}
\int_{\bR^3} \left( \frac{1}{2} \rho |\bu|^2 + \frac{1}{\gamma-1} \rho^{\gamma} \right) dx < \infty.
\end{equation}
\end{theorem}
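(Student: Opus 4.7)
The plan is to imitate the energy argument from Theorem \ref{T11}: test the weak momentum equation \eqref{E16} against $\bu\varphi_R$ for a cutoff $\varphi_R=\varphi_{R,2R}$ and pass to the limit $R\to\infty$, with the assumption $\gamma\ge 6$ playing the role previously played by $\rho\in L^{\infty}(\Omega)$. First one approximates $\bu\in D^{1,2}_0(\bR^3;\bR^3)$ by compactly supported smooth functions. The cubic convective contribution is simplified via the continuity equation \eqref{E15}, the pressure work is turned into a boundary integral by means of the renormalized identity $\divg(P(\rho)\bu)=-p(\rho)\divg\bu$ (a consequence of \eqref{E14} together with \eqref{E15}), and the forcing is rewritten by applying \eqref{E15} to $G\varphi_R$. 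Collecting everything yields
\[
\into \mathbb{S}(\na\bu):\na\bu\,\varphi_R
= \into\Bigl[\tfrac12\rho|\bu|^2+P(\rho)+p(\rho)-G\rho\Bigr]\bu\cdot\na\varphi_R
-\into \mathbb{S}(\na\bu):(\bu\otimes\na\varphi_R).
\]
By monotone convergence the left-hand side converges to $\into\mathbb{S}(\na\bu):\na\bu\gtrsim\|\na\bu\|_{L^2(\bR^3)}^2$, so it is enough to show that each of the four boundary integrals on the right tends to zero along a sequence $R_n\to\infty$.

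To this end I would use: $\bu\in L^6(\bR^3)$ and $\na\bu\in L^2(\bR^3)$ (Sobolev), so that $\|\bu\|_{L^6(A_R)}$ and $\|\na\bu\|_{L^2(A_R)}$ vanish on the annulus $A_R=B(2R)\setminus B(R)$; assumption \eqref{E112}, which gives $\|\rho^\gamma\|_{L^1(A_R)}\to 0$ and $\|\rho|\bu|^2\|_{L^1(A_R)}\to 0$; the Sobolev embedding $G\in D^{1,6\gamma/(5\gamma-6)}_0(\bR^3)\hookrightarrow L^{2\gamma/(\gamma-2)}(\bR^3)$, whence $\|G\|_{L^{2\gamma/(\gamma-2)}(A_R)}\to 0$; and $|\na\varphi_R|\lesssim 1/R$ with $|A_R|\lesssim R^3$. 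The stress term is bounded by $R^{-1}\|\na\bu\|_{L^2(A_R)}\|\bu\|_{L^6(A_R)}|A_R|^{1/3}\to 0$. For the cubic term, Cauchy--Schwarz and H\"older give
\[
\int_{A_R}\rho|\bu|^3\le\Bigl(\int_{A_R}\rho|\bu|^2\Bigr)^{1/2}\Bigl(\|\rho\|_{L^\gamma}\|\bu\|_{L^{4\gamma/(\gamma-1)}(A_R)}^4\Bigr)^{1/2},
\]
and using $4\gamma/(\gamma-1)\le 6$ for $\gamma\ge 3$ together with $\|\bu\|_{L^{4\gamma/(\gamma-1)}(A_R)}\lesssim\|\bu\|_{L^6(A_R)}R^{(\gamma-3)/(12\gamma)}$ (H\"older on the bounded set $A_R$), one concludes that $R^{-1}\int_{A_R}\rho|\bu|^3\lesssim o(1)\,R^{-(\gamma+3)/(2\gamma)}\to 0$. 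The force contribution is controlled by $R^{-1}\|G\|_{L^{2\gamma/(\gamma-2)}(A_R)}\|\rho\|_{L^\gamma}\|\bu\|_{L^6}|A_R|^{1/3}\to 0$, since $\|G\|_{L^{2\gamma/(\gamma-2)}(A_R)}\to 0$ compensates the $R$ coming from $|A_R|^{1/3}$.

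The real obstacle is the pressure boundary integral $\into(P(\rho)+p(\rho))\bu\cdot\na\varphi_R$, comparable to $R^{-1}\int_{A_R}\rho^\gamma|\bu|$: since $\rho\in L^\gamma$ and $\bu\in L^6$ do not put $\rho^\gamma\bu$ in $L^1$, naive H\"older is insufficient. The role of the hypothesis $\gamma\ge 6$ is to enable the reduction $\rho^\gamma|\bu|\le(\rho^\gamma)^{1/2}(\rho^\gamma|\bu|^2)^{1/2}$, followed by the decomposition $\rho^\gamma|\bu|^2=\rho^{\gamma-1}\cdot\rho|\bu|^2$; this in turn reduces the control of the pressure boundary integral to a higher integrability bound $\rho\in L^p_{\unif}$ with $p>\gamma$. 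Such an improvement is extracted from the momentum equation by a Bogovskii-type argument: testing \eqref{E16} against $\bv=\mathcal{B}[\psi(\rho^\theta-\langle\rho^\theta\rangle_\psi)]$ for a small $\theta>0$ and a suitable cutoff $\psi$ yields a uniform bound on $\into\psi\,\rho^{\gamma+\theta}$ in terms of the energy \eqref{E112} and $\|\na G\|_{L^{6\gamma/(5\gamma-6)}}$, with the constraint $\gamma\ge 6$ ensuring that every contribution produced by this test function is controlled by the available integrabilities. Combined with a subsequence argument along the dyadic shells $R_n=2^n$, the pressure boundary integral also vanishes in the limit. Passing $R=R_n\to\infty$ one obtains $\into\mathbb{S}(\na\bu):\na\bu=0$, hence $\na\bu=0$ a.e.\ in $\bR^3$, and the Sobolev embedding $D^{1,2}_0(\bR^3;\bR^3)\hookrightarrow L^6(\bR^3)$ forces $\bu\equiv 0$.
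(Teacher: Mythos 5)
Your overall plan --- energy test + Bogovskii-type improvement of the density integrability --- is the same guiding idea as the paper's, but as written the argument has a genuine gap in exactly the spot you flagged as the ``real obstacle''. The proposed pointwise reduction $\rho^\gamma|\bu| \le (\rho^\gamma)^{1/2}(\rho^{\gamma-1}\cdot \rho|\bu|^2)^{1/2}$ does not close: to bound $\int_{A_R}\rho^{\gamma-1}\cdot\rho|\bu|^2$ with $\rho|\bu|^2$ known only in $L^1$ you would need $\rho^{\gamma-1}\in L^\infty$, which no finite-exponent integrability delivers. Moreover, a single Bogovskii test with small $\theta>0$ gives only $\rho\in L^{\gamma+\theta}_{\loc}$, which is quantitatively insufficient: estimating $R^{-1}\int_{A_R}\rho^\gamma|\bu|$ by H\"older with $\rho\in L^p$, $\bu\in L^6$, and $|A_R|\approx R^3$, one finds that the power of $R$ is nonpositive only if $p\ge 2\gamma$. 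So what you really need is $\rho\in L^{2\gamma}(\bR^3)$, not ``$p>\gamma$''. The paper supplies exactly this via an iterated Bogovskii argument (Corollary \ref{C11}, building on Proposition \ref{P1}): the exponents $\gamma_n\nearrow\gamma$ are bootstrapped so that the final conclusion is $\rho\in L^{2\gamma}(\bR^3)$, and the constraint $\gamma\ge 6$ is what makes each term in that iteration controllable by $\bu\in D^{1,2}_0$, $G\in D^{1,6\gamma/(5\gamma-6)}_0$, and the energy bound.

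Once $\rho\in L^{2\gamma}(\bR^3)$ is in hand, the cutoff is in fact unnecessary. The paper observes that then $\rho^\gamma\in L^2$, $\rho^\gamma\bu\in L^{3/2}$, $\rho\bu\otimes\bu\in L^1\cap L^{6\gamma/(3+2\gamma)}$, $\rho\nabla G\in L^{6/5}$, so the momentum form \eqref{E16} extends to $\bfvarphi=\bu\in D^{1,2}_0(\bR^3;\bR^3)$ directly; the pressure work integral $\int\rho^\gamma\divg\bu$ then vanishes by the renormalized continuity equation (DiPerna--Lions), and $\int\rho\bu\cdot\nabla(\tfrac12|\bu|^2+G)=0$ by \eqref{E15}, forcing $\int\mathbb S(\nabla\bu):\nabla\bu=0$. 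If you prefer to keep the cutoff argument, it does work once you replace your decomposition by the direct H\"older bound $\int_{A_R}\rho^\gamma|\bu|\lesssim \|\rho\|^{\gamma}_{L^{2\gamma}(A_R)}\|\bu\|_{L^6(A_R)}|A_R|^{1/3}$, which together with $R^{-1}|A_R|^{1/3}\lesssim 1$ and $\|\rho\|_{L^{2\gamma}(A_R)}\to 0$, $\|\bu\|_{L^6(A_R)}\to 0$ gives the vanishing of the boundary term --- but again, the essential missing step is the iteration that produces $\rho\in L^{2\gamma}$ rather than a mildly improved $L^{\gamma+\theta}$.
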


For the $3 \le \gamma < 6$ case, we can prove the following simple result.

\begin{theorem}
\label{T23}
Let $\Omega = \bR^3$ and let $(\rho, \bu)$ be a weak solution to \eqref{E11} and \eqref{E12} with $p(\rho) = \rho^{\gamma}$ and $3 \le \gamma < 6$.
Suppose $G \in D_{0}^{1,\frac{6\gamma}{5\gamma-6}}(\bR^3)$.
If $\rho \in L^{\gamma} (\bR^3)$, then $\bu = 0$ a.e. 
\end{theorem}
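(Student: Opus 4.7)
The plan is to test the weak formulation \eqref{E16} with $\bfvarphi = \bu\varphi_R$ (with $\varphi_R=\varphi_{R,2R}$ the cutoff from Section \ref{S2}, and $\bu\varphi_R$ first approximated by $C^1_c$ elements since $\bu\in D^{1,2}_0$) and to push $R\to\infty$. After using \eqref{E15} to rewrite $\int\rho\,\bu\cdot\nabla(\tfrac12|\bu|^2)\,\varphi_R$, the renormalized identity $\divg(P(\rho)\bu)=-p(\rho)\divg\bu$ (which follows from Lemma \ref{L21} applied to \eqref{E15} with the convex $P$ from \eqref{E14}), and the choice $\phi=G\varphi_R$ in \eqref{E15} to produce $\int\rho\nabla G\cdot\bu\varphi_R=-\int\rho G\,\bu\cdot\nabla\varphi_R$, the identity becomes
\[
\int\mathbb{S}(\nabla\bu):\nabla\bu\,\varphi_R
=-\int\mathbb{S}(\nabla\bu):(\bu\otimes\nabla\varphi_R)
+\int\tfrac12\rho|\bu|^2\bu\cdot\nabla\varphi_R
+\int (P(\rho)+p(\rho))\bu\cdot\nabla\varphi_R
-\int\rho G\,\bu\cdot\nabla\varphi_R.
\]
Every right-hand-side integrand is supported in the annulus $A_R:=B(2R)\setminus B(R)$, and the goal is to make each of them tend to zero; then by Korn-type coercivity $\int\mathbb{S}(\nabla\bu):\nabla\bu=0$, so $\nabla\bu\equiv 0$, and since $\bu\in D^{1,2}_0(\bR^3)\hookrightarrow L^6(\bR^3)$ the only possibility is $\bu=0$.

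Three of the four boundary terms are routine. The viscous term is bounded by $C\|\nabla\bu\|_{L^2(A_R)}\|\bu\|_{L^6(A_R)}$, which vanishes by tail decay of $\nabla\bu\in L^2$ and $\bu\in L^6$. The convective term, handled by Hölder together with interpolation between $\bu\in L^6$ and the Lebesgue exponent $3\gamma/(\gamma-1)$ natural for $\rho|\bu|^3$, produces a factor $R^{(\gamma-6)/(2\gamma)}\|\rho\|_{L^\gamma(A_R)}\|\bu\|_{L^6(A_R)}^3$, which vanishes thanks to the hypothesis $\gamma<6$. The force term is controlled by $C\|\rho\|_{L^\gamma(A_R)}\|G\|_{L^{2\gamma/(\gamma-2)}(A_R)}\|\bu\|_{L^6(A_R)}$, where the Sobolev exponent $2\gamma/(\gamma-2)$ for $G$ is exactly what the embedding $D^{1,6\gamma/(5\gamma-6)}_0(\bR^3)\hookrightarrow L^{2\gamma/(\gamma-2)}(\bR^3)$ provides; every factor goes to zero.

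The pressure term $\int_{A_R}(P+p)\bu\cdot\nabla\varphi_R$ is the crux and the main obstacle. With only $\rho\in L^\gamma(\bR^3)$ we have $\rho^\gamma\in L^1(\bR^3)$, and together with $\bu\in L^6(\bR^3)$ and $|\nabla\varphi_R|\lesssim R^{-1}$ no direct Hölder pairing closes (the reciprocal exponents overshoot $1$). The plan is to first upgrade the integrability of $\rho$ globally, by running the classical Bogovskii pressure-improvement argument: insert into \eqref{E16} the test function $\bw_R=\mathcal B_{B(2R)}(\rho^\theta\chi_R-\overline{\rho^\theta\chi_R})$ for a cutoff $\chi_R$ and a small parameter $\theta>0$, so that $\divg\bw_R=\rho^\theta\chi_R-\overline{\rho^\theta\chi_R}$ and $\|\nabla\bw_R\|_{L^{\gamma/\theta}(B(2R))}\lesssim\|\rho\|_{L^\gamma}^\theta$. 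For $\theta$ small enough that the dual pairings $\mathbb{S}\in L^2$, $\rho\bu\otimes\bu\in L^{3\gamma/(\gamma+3)}$, and $\rho\nabla G\in L^{6/5}$ against $\nabla\bw_R$ (respectively $\bw_R$) are all controlled uniformly in $R$ by the hypotheses, this yields $\int_{B(R)}\rho^{\gamma+\theta}\le C$ independent of $R$, hence $\rho\in L^{\gamma+\theta}(\bR^3)$. The range $\gamma\ge 3$ is precisely what ensures that the admissible $\theta$ is large enough that interpolation with $\rho\in L^\gamma$ reaches $L^{6\gamma/5}(\bR^3)$. With $\rho\in L^{6\gamma/5}(\bR^3)$ in hand, the pressure term is estimated by Hölder with exponents $6/5$ and $6$:
\[
\Big|\int_{A_R}(P+p)\,\bu\cdot\nabla\varphi_R\Big|
\le C\,\|\rho\|_{L^{6\gamma/5}(A_R)}^\gamma\,R^{-1}\|\bu\|_{L^6(A_R)}\to 0.
\]

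With all four boundary terms vanishing, monotone convergence gives $\int_{\bR^3}\mathbb{S}(\nabla\bu):\nabla\bu=0$, hence $\nabla\bu=0$ and then $\bu=0$ as explained. The hard step is the Bogovskii pressure estimate: making it \emph{uniform in $R$} with only $\rho\in L^\gamma$ requires carefully balancing the Bogovskii exponent $\theta$ against the natural dual exponents of $\mathbb{S}$, $\rho\bu\otimes\bu$, and $\rho\nabla G$; the precise Sobolev index $6\gamma/(5\gamma-6)$ assumed for $G$ is exactly what is needed for the force pairing to close at this step.
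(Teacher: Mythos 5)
Your overall outline—test the momentum balance with a localized velocity, use Lemma \ref{L21} and the renormalized continuity equation to kill the pressure and flux terms, and iterate a Bogovskii-type estimate to gain density integrability first—matches the spirit of the paper's proof (via Corollary \ref{C2} followed by the argument in Section \ref{S10}). However, the step you call "the hard step" has a genuine gap, and for exactly the range $3\le\gamma<6$ covered by the theorem.

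The single Bogovskii step you propose cannot close. Testing with $\bw_R=\mathcal B_{B(2R)}\bigl(\rho^\theta\chi_R-\overline{\rho^\theta\chi_R}\bigr)$, the dual pairing with the viscous term requires $\|\nabla\bw_R\|_{L^2(B(2R))}\lesssim\|\rho\|_{L^{2\theta}(B(2R))}^{\theta}$ to be bounded \emph{uniformly in $R$}. With only $\rho\in L^\gamma(\bR^3)$, H\"older on the ball gives $\|\rho\|_{L^{2\theta}(B(2R))}^\theta\lesssim R^{3/2-3\theta/\gamma}\|\rho\|_{L^\gamma}^\theta$, which is bounded only when $\theta\ge\gamma/2$. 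On the other hand, the convective term needs $\nabla\bw_R\in L^{3\gamma/(2\gamma-3)}(B(2R))$ uniformly (the conjugate of $3\gamma/(\gamma+3)$, where $\rho\bu\otimes\bu$ lives), and the same scaling forces $\theta\le(2\gamma-3)/3$. But $\gamma/2\le(2\gamma-3)/3$ if and only if $\gamma\ge 6$, so for $3\le\gamma<6$ the admissible range of $\theta$ is \emph{empty}, and no choice of $\theta$ makes the momentum pairing uniformly bounded. Your assertion that "$\gamma\ge 3$ is precisely what ensures that the admissible $\theta$ is large enough" has the dependence backwards: $\gamma\ge 3$ is relevant later (to ensure the iteration's limit exponent $3\gamma-3\ge 6$), whereas the one-shot Bogovskii argument is exactly the argument of Proposition \ref{P1} and requires $\gamma\ge 6$.

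The paper circumvents this via the two-level bootstrap in Proposition \ref{P2} and Corollary \ref{C2}: instead of a single exponent $\theta$, a sequence $\widetilde\gamma_n\nearrow 2\gamma-3$ is chosen so that at each stage the convective term pairs against $\nabla\bfphi_n\in L^{3c_{n-1}/(2c_{n-1}-3)}$ using the \emph{already upgraded} $\rho\in L^{c_{n-1}}$ with $c_n=\gamma+\widetilde\gamma_n\to 3\gamma-3\ge 6$, while Corollary \ref{C2} switches to the truncated test functions $\nabla\Delta^{-1}\bigl([(\rho^\ep-1)^{\widetilde\gamma_n}]^+\varphi\bigr)\varphi$ to avoid the polynomial losses in $R$ that plague the plain Bogovskii construction at the global level. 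Only after $\rho\in L^{c_k}$ with $c_k\ge 6$ is in hand (giving $\rho\bu\otimes\bu\in L^2$) does the remaining argument reduce to the $\gamma\ge 6$ scheme. If you replace your single Bogovskii step with this iteration (or simply invoke Corollary \ref{C2} to get $\rho\in L^{2\gamma}(\bR^3)$), the rest of your cutoff argument goes through; at that stage, testing directly with $\bu\in D^{1,2}_0$, as the paper does, also becomes legitimate and spares you the annulus estimates entirely.
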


\section{Proofs}
\label{pr}

\subsection{Proof of Theorem \ref{T1}}
\label{S3}

Since $\bu \in L^6(\Omega; \bR^3)$ due to the Sobolev inequality, we have from \eqref{E18} that
\begin{equation}
\label{E31}
\rho \bu \in L^1 \cap L^6(\Omega; \bR^3) 
\qand \rho \bu \otimes \bu \in L^1 \cap L^3(\Omega; \bR^{3 \times 3}).
\end{equation}
In addition, we also have $\rho \nabla G \in L^1 \cap L^q (\Omega; \bR^3)$ if $G \in D_0^{1, q}(\Omega)$ for some $q \ge 6/5$ and $\rho \nabla G \in L^1 \cap L^\infty (\Omega; \bR^3)$ if $G \in W^{1, \infty}(\Omega)$.
In any case, we have
\begin{equation}
\label{E310}
\rho \nabla G \in L^1 \cap L^\frac{6}{5} (\Omega; \bR^3).
\end{equation}
Moreover, \eqref{E13}, \eqref{E18}, and the assumption $p(\rho) \lesssim \sqrt \rho$ for $\rho < 1$ guarantees that
\begin{equation}
\label{E31b}
p(\rho) \in L^2 \cap L^{\infty}(\Omega).
\end{equation}
Consider a sequence of function $p_n \in C_c(0,\infty)$ such that $0 \le p_n \nearrow p$.
Then the Diperna--Lions theory yields the so-called renormalized version of \eqref{E15}, see e.g. \cite{MR1022305} or \cite[Lemma 11.13]{MR2499296}:   
\begin{equation}
\label{E31c}
\int_{\bR^3} [ \rho B_n(\rho) \bu \cdot \nabla \phi - p_n(\rho) \divg \bu \phi ] dx = 0
\end{equation}
for any $\phi \in C^{\infty}_c(\bR^3)$ and 
\[
B_n(\rho) = B(1) + \int_1^{\rho} \frac{p_n(z)}{z^2} dz.
\]
Note that $B_n$ is continuous and bounded.
From $\bu \in D^{1,2}_0(\Omega; \bR^3)$, \eqref{E18}, and \eqref{E31b}, we have
\[
\rho B_n(\rho) \bu \in L^1(\bR^3; \bR^3) \qand  p_n(\rho) \divg \bu \in L^1(\bR^3).
\]
Hence, we have from \eqref{E31c} that
\[
\int_{\bR^3} p_n(\rho) \divg \bu dx = 0.
\]
By Lebesgue dominated convergence theorem, we obtain
\begin{equation}
\label{E32}
\int_{\Omega} p(\rho) \divg \bu dx = 0.
\end{equation}
Since $\bu \in D^{1,2}_0(\Omega; \bR^3)$, $\rho \bu \otimes \bu \in L^2(\Omega; \bR^{3 \times 3})$, $p(\rho) \in L^2(\Omega)$, and $\rho \nabla G \in L^{\frac{6}{5}}(\Omega; \bR^3)$, momentum balance \eqref{E16} can be extended to $\bfvarphi \in D^{1,2}_0(\Omega; \bR^3)$.
Therefore, we can insert $\bfvarphi = \bu$ in \eqref{E16} and use \eqref{E32} to get  
\[
\int_{\Omega} \rho \bu \cdot \nabla \left( \frac{1}{2} |\bu|^2 + G \right) dx 
= \int_{\Omega} \mathbb S(\nabla \bu) : \nabla \bu dx.
\]
As $\rho \bu \in L^1 \cap L^6(\Omega; \bR^3)$, mass conservation equation \eqref{E15} can be extended to $\phi \in W^{1, \infty}(\Omega; \bR^3)$ or $\phi \in D^{1,q}_0(\Omega; \bR^3)$ for $6/5 \le q < \infty$.
Therefore, we can conclude that
\begin{equation}
\label{E33}
\int_{\Omega} \rho \bu \cdot \nabla \left( \frac{1}{2} |\bu|^2 + G \right) dx = 0 = \int_{\Omega} \mathbb S(\nabla \bu) : \nabla \bu dx.
\end{equation}
In order to conclude $\nabla \bu = 0$, let $\bu_n \in C^{\infty}_c(\Omega)$ and $\bu_n \to \bu$ in $D^{1,2}_0(\Omega; \bR^3)$. 
Using integration by parts, we have
\begin{align*}
\int_{\Omega} \nabla^t \bu : \nabla \bu dx 
&= \int_{\Omega} \left( \partial_j \bu_i - \partial_j (\bu_n)_i \right) \left( \partial_i \bu_j - \partial_i (\bu_n)_j \right) dx \\
&\quad + \int_{\Omega} \nabla^t \bu_n : \nabla \bu + \nabla \bu_n : \nabla^t \bu - |\divg \bu_n|^2 dx.
\end{align*}
Letting $n \to \infty$, we have
\[
\int_{\Omega} \nabla^t \bu : \nabla \bu dx = \int_{\Omega} |\divg \bu|^2 dx.
\]
From \eqref{E33} and the definition $\mathbb S(\nabla \bu) = \mu (\nabla \bu + \nabla^t \bu) + (\eta - 2\mu/3) \divg \bu \mathbb I$, we get
\[
\int_{\Omega} \mu |\nabla \bu|^2 + (\mu/3 + \eta) |\divg \bu|^2 dx = 0
\]
and hence $\nabla \bu = 0$ a.e. 
Since $\bu \in D^{1,2}_0(\Omega;\bR^3)$, we can conclude that $\bu = 0$ a.e. 
This completes the proof of Theorem \ref{T1}.

\subsection{Proof of Theorem \ref{T11}}
\label{S31}

First, we observe that hypothesis \eqref{E181}, along with uniform boundedness of the density, imply
\[
\int_{\Omega} \rho^{\gamma} dx \lesssim 
\int_{\Omega} p(\rho) dx \lesssim
\int_{\Omega} P(\rho) dx.
\]
That is  
\[
\rho \in L^{\gamma} \cap L^{\infty}(\Omega).
\]
Combining this with \eqref{E182} and $\bu \in D^{1,2}_0(\Omega; \bR^{3})$, we have
\[
\rho \bu \in L^{q(\gamma)} \cap L^6(\Omega; \bR^3)
\]
where $q(\gamma) = \frac{2\gamma}{\gamma+1}$ if $\gamma \ge \frac{3}{2}$, $q(\gamma) = \frac{6\gamma}{\gamma+6}$ if $\frac{6}{5} \le \gamma < \frac{3}{2}$, and $q(\gamma) = 1$ if $1 < \gamma < \frac{6}{5}$. 
Moreover, we also have
\[
\rho \bu \otimes \bu \in L^1 \cap L^3 (\Omega; \bR^{3 \times 3}),
\qand p(\rho), P(\rho) \in L^1 \cap L^{\infty} (\Omega).
\]
Note that the assumption on $G$ guarantees
\[
\rho \nabla G \in L^{\frac{6}{5}}(\Omega; \bR^{3}) \qand \rho \bu \cdot \nabla G \in L^1(\Omega).
\]
Then, momentum balance \eqref{E16} can be extended to $\bfvarphi \in D^{1,2}_0(\Omega; \bR^3)$.
Therefore, we can insert $\bfvarphi = \bu$ in \eqref{E16} to get  
\[
\int_{\Omega} \rho \bu \cdot \nabla \left( \frac{1}{2} |\bu|^2 + G \right) dx 
= \int_{\Omega} \mathbb S(\nabla \bu) : \nabla \bu dx.
\]
Here, note that \eqref{E14} and the Diperna--Lions theory yields, see e.g. \cite[Lemma 3.3]{MR2084891}:   
\[
\int_{\bR^3} [P(\rho) \bu \cdot \nabla \phi - p(\rho) \divg \bu \phi] dx = 0 
\]
for any $\phi \in C^{\infty}_c(\bR^3)$.
Therefore, we have
\[
\int_{\Omega} p(\rho) \divg \bu = 0
\]
since $P(\rho) \bu \in L^1(\Omega; \bR^3)$ and $p(\rho) \divg \bu \in L^1(\Omega)$.
As in the proof of Theorem \ref{T1}, we can conclude that $\bu = 0$ a.e.

\subsection{Proof of Theorem \ref{T2}}
\label{S4} 

From the assumption \eqref{E19}, \eqref{E13}, and $p \in C^1(0,\infty)$, we have
\[
\rho \bu \in L^1 \cap L^\infty(\bR^3; \bR^3), 
\quad \rho \bu \otimes \bu \in L^1 \cap L^\infty(\bR^3; \bR^{3 \times 3}), 
\qand p(\rho) \in L^1 \cap L^\infty(\bR^3).
\] 
Moreover, we also have
\[
 \rho \nabla G \in L^1 \cap L^{q}(\bR^3; \bR^3).
\]
with $q > \frac{3}{2}$.
Consequently, the mapping
\[
\bfvarphi \mapsto \int_{\bR^3} [\rho \bu \otimes \bu : \nabla \bfvarphi + p(\rho) \divg \bfvarphi] dx + \int_{\bR^3} \rho \nabla G \cdot \bfvarphi dx
\]
represents a bounded linear functional on the Hilbert space $D^{1,2}_0(\bR^3; \bR^3)$.
By virtue of Riesz representation theorem and \eqref{E22}, there is a unique $\tilde \bu \in D^{1,2}_0(\bR^3;\bR^3)$ such that 
\begin{equation}
\label{E41}
\int_{\bR^3} \mathbb S(\nabla \tilde \bu) : \nabla \bfvarphi dx 
= \langle \tilde \bu, \bfvarphi \rangle 
= \int_{\bR^3} [\rho \bu \otimes \bu : \nabla \bfvarphi + p(\rho) \divg \bfvarphi] dx
+ \int_{\bR^3} \rho \nabla G \cdot \bfvarphi dx
\end{equation}
for all $\varphi \in C^{\infty}_c(\bR^3; \bR^3)$.
Since \eqref{E41} can be extended to $\bfvarphi \in D^{1,2}_0(\bR^3;\bR^3)$, we get the energy balance by inserting $\bfvarphi = \tilde \bu$;
\begin{equation}
\label{E42}
\int_{\bR^3} \mathbb S(\nabla \tilde \bu) : \nabla \tilde \bu dx
= \int_{\bR^3} [\rho \bu \otimes \bu : \nabla \tilde \bu + p(\rho) \divg \tilde \bu] dx
+ \int_{\bR^3} \rho \nabla G \cdot \tilde \bu dx.
\end{equation}
Moreover, using the standard elliptic $L^p$ estimates, we have
\[
\nabla \tilde \bu \in L^{q_1}(\bR^3; \bR^{3 \times 3}) \qand \tilde \bu \in L^{q_2}(\bR^3; \bR^3) 
\]
for $3/2 < q_1 \le \frac{3q}{3-q}$ if $q < 3$ and $3/2 < q_1 < \infty$ if $q \ge 3$ and $3 < q_2 \le \infty$.
Hence, $\tilde \bu$ is bounded and continuous on $\bR^3$.
Combining \eqref{E16a} and \eqref{E41}, we have $\bu - \tilde \bu \in L^{\infty}(\bR^3; \bR^3)$ and
\[
\int_{\bR^3} (\bu - \tilde \bu) \cdot \divg \mathbb S(\nabla \bfvarphi) dx = 0
\]
for all $\bfvarphi \in C^{\infty}_c (\bR^3; \bR^3)$.
By Lemma \ref{L7} and the local elliptic regularity theorem, we have that $\bu - \tilde \bu$ is a smooth solution of the Lam\'e system.
Applying Lemma \ref{L2}, we conclude that $\bu - \tilde \bu$ is a constant vector.
\begin{enumerate}
\item
Assume $G=0$.

Since $G=0$ and $\bu - \tilde \bu$ is a constant vector, we get from \eqref{E42} that
\[
\int_{\bR^3} \mathbb S(\nabla \bu) : \nabla \bu dx
= \int_{\bR^3} [\rho \bu \otimes \bu : \nabla \bu + p(\rho) \divg \bu] dx.
\]
As $\bu \in L^{\infty}(\bR^3; \bR^3)$ and $\nabla \bu \in L^2(\bR^3; \bR^{3 \times 3})$, we can repeat the same argument in Theorem \ref{T1} to get
\[
\int_{\bR^3} p(\rho) \divg \bu = 0.
\]
So we have
\begin{align*}
\int_{\bR^3} \mathbb S(\nabla \bu) : \nabla \bu dx
= \frac{1}{2} \int_{\bR^3} \rho \bu \cdot \nabla |\bu|^2 dx
\end{align*}
and from mass conservation equation \eqref{E15a},
\[
\int_{\bR^3} \mathbb S(\nabla \bu) : \nabla \bu dx = 0
\]
Therefore, $\bu$ equals a constant vector field $\bu_{\infty}$ a.e.
Now suppose $\bu_{\infty} \ne 0$.
Inserting $\bu = \bu_{\infty}$ to \eqref{E15}, we have
\[
\int \rho \bu_{\infty} \cdot \nabla \phi dx = 0
\]
for all $\phi \in C^{\infty}_c(\bR^3)$.
Then
\[
0 = \int \rho \bu_{\infty} \cdot \nabla \phi^{\ep} dx = - \int \bu_{\infty} \cdot \nabla \rho^{\ep} \phi dx
\]
for all $\phi \in C^{\infty}_c(\bR^3)$.
Then directional derivative of $\rho^{\ep}$ along $\bu_{\infty}$ direction is $0$.
From the assumption $\rho \in L^1(\bR^3)$, we have $\rho^{\ep} = 0$ for all $\ep > 0$.
Since $\rho^{\ep} \to \rho$ in $L^1(\bR^3)$, we can conclude that $\rho = 0$ a.e.

\item
Assume $\text{ess} \limsup_{|x| \to \infty} |\bu(x)| = 0$.

Since $\bu - \tilde \bu$ is a constant vector, we have
\[
\bu = \tilde \bu.
\]
That is, $\bu$ also belongs to $D^{1,2}_0(\bR^3;\bR^3)$. 
From \eqref{E42}, we have
\[
\int_{\bR^3} \mathbb S(\nabla \bu) : \nabla \bu dx
= \int_{\bR^3} [\rho \bu \otimes \bu : \nabla \bu + p(\rho) \divg \bu] dx
+ \int_{\bR^3} \rho \nabla G \cdot \bu dx.
\]
Similarly, we can easily obtain 
\[
\int_{\bR^3} \mathbb S(\nabla \bu) : \nabla \bu dx
= \frac{1}{2} \int_{\bR^3} \rho \bu \cdot \nabla |\bu|^2 dx
+ \int_{\bR^3} \rho \bu \cdot \nabla G dx.
\]
Therefore, we can conclude from mass conservation equation \eqref{E15a} that 
\[
\int_{\bR^3} \mathbb S(\nabla \bu) : \nabla \bu dx = 0.
\]
Since $\bu \in D^{1,2}_0(\bR^3; \bR^3)$, we obtain $\bu=0$ a.e.
\end{enumerate}

This completes the proof of Theorem \ref{T2}.
\qed

\subsection{Higher integrability of density}
\label{S5}

\begin{prop}
\label{P1}
Let $(\rho, \bu)$ be a weak solution of \eqref{E11} and \eqref{E12} in $\bR^3$ with $p(\rho) = \rho^{\gamma}$ with $\gamma \ge 6$ and $G \in D_{0}^{1,q}(\bR^3)$ for some $\frac{6\gamma}{5\gamma-6} \le q \le \frac{3}{2}$.
If 
\begin{equation}
\label{E51}
\sup_{R > 1} R^{-\alpha} \norm{\rho}_{L^{\gamma}(B(R))} < \infty
\end{equation}
for some $0 \le \alpha \le \frac{3}{\gamma}$, then we have the followings:
\begin{itemize}
\item
If $\frac{2\gamma-3}{\gamma(\gamma-1)} \le \alpha \le \frac{3}{\gamma}$, then
\begin{equation}
\label{E52}
\sup_{R > 1} R^{- \left( \alpha - \frac{3}{2\gamma} \right)} \norm{\rho}_{L^{2\gamma}(B(R))} < \infty.
\end{equation}
\item
If $0 \le \alpha < \frac{2\gamma-3}{\gamma(\gamma-1)}$, then
\begin{equation}
\label{E52b}
\sup_{R > 1} R^{- \left( \frac{\alpha}{\gamma} + \frac{1}{2\gamma} - \frac{3}{\gamma^2} \right)} \norm{\rho}_{L^{2\gamma}(B(R))} < \infty.
\end{equation}
\end{itemize}
\end{prop}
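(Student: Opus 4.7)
The plan is the standard Bogovskii-test-function argument for the compressible Navier--Stokes system, adapted to an annular localization at scale $R$. Let $\phi = \varphi_{R,2R}$ and set
\[
c_R = |B(2R)|^{-1} \int_{B(2R)} \rho^{\gamma} \phi\,dx, \qquad \bfvarphi = \bog_{B(2R)}\bigl(\rho^{\gamma}\phi - c_R\bigr),
\]
extended by zero outside $B(2R)$. Because only $\rho\in L^{\gamma}_{\loc}$ is known a priori, I would in fact truncate the integrand as $f_N = \min\{\rho^{\gamma},N\}$, run the argument with $\bog_{B(2R)}(f_N\phi - c_{R,N})$, and pass to the limit $N\to\infty$ by monotone convergence, which upgrades a priori finiteness to the stated quantitative bound. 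The only facts about $\bfvarphi$ I will need are the classical $L^p$ bound $\|\nabla\bfvarphi\|_{L^p} \lesssim \|\rho^{\gamma}\phi\|_{L^p} + R^{3/p}|c_R|$, with constants uniform in $R$ by scaling, and Poincar\'e's inequality $\|\bfvarphi\|_{L^p} \lesssim R\|\nabla\bfvarphi\|_{L^p}$.

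Inserting $\bfvarphi$ into \eqref{E16}, the pressure contribution produces $\int \rho^{2\gamma}\phi - c_R \int \rho^{\gamma}$, and the mean-value piece is absorbed by the estimate $c_R \int \rho^{\gamma} \lesssim R^{-3}\|\rho\|_{L^{\gamma}(B(2R))}^{2\gamma} \lesssim R^{2\gamma\alpha - 3}$ coming from \eqref{E51}. Rearranging,
\[
\int_{B(R)} \rho^{2\gamma}\,dx \le C R^{2\gamma\alpha - 3} + |I_{\mathrm{visc}}| + |I_{\mathrm{conv}}| + |I_{\mathrm{force}}|,
\]
and the three remaining terms are estimated by H\"older's inequality together with $\bu \in L^6(\bR^3)$ (which follows from $\nabla\bu\in L^2$ and Sobolev). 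For $I_{\mathrm{visc}}$ one uses $\|\nabla\bu\|_{L^2}\|\nabla\bfvarphi\|_{L^2}$, producing a factor $\|\rho\|_{L^{2\gamma}(B(2R))}^{\gamma}$. For $I_{\mathrm{conv}}$, $\rho\bu\otimes\bu \in L^{3\gamma/(\gamma+3)}$, so the dual Bogovskii bound asks for $\rho \in L^{3\gamma^{2}/(2\gamma-3)}$; the hypothesis $\gamma\ge 6$ is precisely what keeps this exponent at or below $2\gamma$, so that norm can be interpolated between the $L^{\gamma}$ datum of \eqref{E51} and the $L^{2\gamma}$ norm under study. The force term $I_{\mathrm{force}}$ is handled analogously, with the exponents $6\gamma/(5\gamma-6)\le q\le 3/2$ for $G$ tailored so that $\nabla G$ pairs with the Sobolev image of $\bfvarphi$ in a compatible way. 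After collecting the powers and applying Young's inequality, the $L^{2\gamma}$ factors appearing on the right are absorbed into the left.

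The case split in the statement is a book-keeping of which right-hand side term dominates: the threshold $\alpha = (2\gamma-3)/(\gamma(\gamma-1))$ is the value at which the mean-value contribution $R^{2\gamma\alpha-3}$ and the interpolated viscous/convection term share the same $R$-exponent, giving exactly $f(\alpha,\gamma) = \alpha - 3/(2\gamma)$ above the threshold (equation \eqref{E52}) and the weaker $f(\alpha,\gamma) = \alpha/\gamma + 1/(2\gamma) - 3/\gamma^{2}$ below it (equation \eqref{E52b}). The main obstacle I anticipate is not any single estimate but the correct choice of interpolation weights, together with the verification that the absorption of $L^{2\gamma}$-factors by Young's inequality truly succeeds simultaneously in all three of $I_{\mathrm{visc}}$, $I_{\mathrm{conv}}$, and $I_{\mathrm{force}}$; the sharpness of the hypothesis $\gamma\ge 6$ in the convection estimate is the clearest sign that this is the delicate point of the argument.
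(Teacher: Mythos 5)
Your single-shot approach has a genuine gap at the absorption step. You test with $\bog(\rho^{\gamma}\phi - c_R)$ (truncated), which produces $\int_{B(R)} \rho^{2\gamma}$ on the left, but the Bogovskii bound puts $\|\nabla\bfvarphi\|_{L^2(B(2R))} \lesssim \|\rho\|_{L^{2\gamma}(B(2R))}^{\gamma}$ on the right. Young's inequality then gives an upper bound of the form $\epsilon\int_{B(2R)}\rho^{2\gamma} + C_\epsilon(\cdots)$, and $\int_{B(2R)}\rho^{2\gamma}$ strictly dominates $\int_{B(R)}\rho^{2\gamma}$ — the domains do not match, so nothing can be absorbed. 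Your truncation device $f_N = \min\{\rho^\gamma, N\}$ guarantees that each step is finite but does not cure the mismatch: one still ends with $\int\rho^\gamma f_N \phi \le \epsilon\int_{B(2R)}\rho^\gamma f_N + C$, with $\phi$ supported in $B(2R)$ but equal to $1$ only on $B(R)$. Without a further hole-filling/De Giorgi iteration in the cutoff radii this is not closable, and even then it is not clear one recovers the sharp $R$-exponents of \eqref{E52} and \eqref{E52b}.

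The paper avoids absorption altogether by bootstrapping in the exponent rather than in the radius alone. It tests \eqref{E16} with $\bfphi_n = \bog\bigl((\rho^{\gamma_n})^{\ep}\varphi_{R_n,R_{n-1}} - \text{mean}\bigr)$ where $\gamma_n = (1 - 2^{-n})\gamma \nearrow \gamma$ and $R_n \searrow R$. Because $\gamma_n < \gamma$, the right-hand side only involves $\|\rho\|_{L^{2\gamma_n}}^{\gamma_n}$, which is finite by the previous step, while the left-hand side produces $\int\rho^{\gamma + \gamma_n} = \int\rho^{2\gamma_{n+1}}$, a genuinely higher power. One gets a clean recurrence $2\gamma_{n+1}a_{n+1} = \alpha\gamma - \tfrac{3}{2} + \gamma_n a_n$ (or its analogue in the small-$\alpha$ regime) for the growth exponents, which in the limit $n\to\infty$ yields exactly $\alpha - \tfrac{3}{2\gamma}$ or $\tfrac{\alpha}{\gamma} + \tfrac{1}{2\gamma} - \tfrac{3}{\gamma^2}$. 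This is where the precise constants in the conclusion come from; your book-keeping sketch (``which right-hand side term dominates'') cannot produce them without this iteration. The threshold $\alpha = \tfrac{2\gamma-3}{\gamma(\gamma-1)}$ is indeed the value at which the exponents $\alpha\gamma - \tfrac32$ and $\alpha + \tfrac12 - \tfrac3\gamma$ of the two competing terms coincide, and $\gamma\ge 6$ is used to make the H\"older exponents in the convection and viscous terms compatible with $\nabla\bfphi_n\in L^2$, both of which you correctly flagged; but the core mechanism — exponent bootstrapping instead of Young absorption — is the idea your proposal is missing.
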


\begin{proof}
For $n \ge 1$, we shall denote
\[
R_n = \left( 1 + \frac{1}{2^n}\right)R \qand \gamma_{n} = \left( 1 - \frac{1}{2^{n}} \right) \gamma.
\]
Define the function
\[
\bfphi_{n} = \bog \left( \left(\rho^{\gamma_{n}}\right)^{\ep} \varphi_{R_n, R_{n-1}} 
- \fint_{B(R_{n-1})} \left(\rho^{\gamma_n}\right)^{\ep} \varphi_{R_n, R_{n-1}} dx \right).
\]
From the properties of the Bogovskii operator, we have
\begin{equation}
\label{E53}
\divg \bfphi_{n} = \left(\rho^{\gamma_n}\right)^{\ep} \varphi_{R_n, R_{n-1}} 
- \fint_{B(R_{n-1})} \left(\rho^{\gamma_n}\right)^{\ep} \varphi_{R_n, R_{n-1}} dx
\end{equation}
and 
\begin{equation}
\label{E54}
\norm{\nabla \bfphi_{n}}_{L^p(B(R_{n-1}))} 
\lesssim_p \norm{\left(\rho^{\gamma_n}\right)^{\ep}}_{L^p\left( B(R_{n-1})\right)} 
\lesssim \norm{\rho}_{L^{p \gamma_n}\left(B(R_{n-1})\right)}^{\gamma_n}
\end{equation}
for all $1 < p < \infty$.
As $\bfphi_{n} \in C^{\infty}_c(B(R_{n-1}); \bR^3)$, we have
\[
\int_{B(R_{n-1})} [\rho \bu \otimes \bu : \nabla \bfphi_{n} + \rho^\gamma \divg \bfphi_{n}] dx 
= \int_{B(R_{n-1})} \mathbb S(\nabla \bu) : \nabla \bfphi_{n} dx
-\int_{B(R_{n-1})} \rho \nabla G \cdot \bfphi_n dx
\]
by inserting $\bfvarphi = \bfphi_{n}$ in \eqref{E16}.
By \eqref{E53}, we have
\begin{align*}
\int_{B(R_{n-1})} \rho^\gamma \divg \bfphi_{n} dx 
&= \int_{B(R_{n-1})} \rho^\gamma \left(\rho^{\gamma_n}\right)^{\ep} \varphi_{R_n, R_{n-1}} dx \\ 
&\quad - \int_{B(R_{n-1})} \rho^\gamma dx \fint_{B(R_{n-1})} \left(\rho^{\gamma_n}\right)^{\ep} \varphi_{R_n, R_{n-1}} dx. 
\end{align*}
Therefore, we have
\begin{align*}
\int_{B(R_{n-1})} \rho^\gamma \left(\rho^{\gamma_n}\right)^{\ep} \varphi_{R_n, R_{n-1}} dx
&= \int_{B(R_{n-1})} \rho^\gamma dx \fint_{B(R_{n-1})} \left(\rho^{\gamma_n}\right)^{\ep} \varphi_{R_n, R_{n-1}} dx \\
&\quad + \int_{B(R_{n-1})} \mathbb S(\nabla \bu) : \nabla  \bfphi_{n} dx 
- \int_{B(R_{n-1})} \rho \bu \otimes \bu : \nabla \bfphi_{n} dx \\
&\quad - \int_{B(R_{n-1})} \rho \nabla G \cdot \bfphi_n dx \\
&:= K_1 + K_2 + K_3 + K_4. 
\end{align*}
Using \eqref{E54} and Jensen's inequality, we have
\[
\fint_{B(R_{n-1})} \left(\rho^{\gamma_n}\right)^{\ep} \varphi_{R_n, R_{n-1}} dx
\lesssim R^{-\frac{3}{2}} \norm{\rho}_{L^{2\gamma_n}\left(B(R_{n-1})\right)}^{\gamma_n}
\]
and hence
\[
K_1 \lesssim R^{-\frac{3}{2} + \alpha \gamma} \norm{\rho}_{L^{2\gamma_n}\left(B(R_{n-1})\right)}^{\gamma_n}.
\]
Since $\gamma \geq 6$, we have from \eqref{E51}, \eqref{E54}, H\"older's inequality and Sobolev's inequality that
\[
K_2 
\lesssim\norm{\nabla \bu}_{L^2(\bR^3)} 
\norm{\nabla \bfphi_n}_{L^{2}(B(R_{n-1}))}
\lesssim \norm{\nabla \bu}_{L^2(\bR^3)} 
\norm{\rho}_{L^{2\gamma_n} \left(B(R_{n-1})\right)}^{\gamma_n}
\]
and
\begin{align*}
K_3
&\lesssim R^{\frac{1}{2} - \frac{3}{\gamma}}
\norm{\rho}_{L^{\gamma}(B(R_{n-1}))}
\norm{\bu}_{L^{6}(\bR^3)}^2
\norm{\nabla \bfphi_n}_{L^{2}(B(R_{n-1}))} \\
&\lesssim R^{\alpha + \frac{1}{2} - \frac{3}{\gamma}} \norm{\nabla \bu}_{L^2(\bR^3)}^2 
\norm{\rho}_{L^{2\gamma_n}(B(R_{n-1}))}^{\gamma_n}.
\end{align*}
By H\"older's inequality and Sobolev's inequality, we have
\[
K_4 \lesssim \norm{\rho}_{L^{\frac{6q}{5q-6}}(B(R_{n-1}))} \norm{\nabla G}_{L^{q}(\bR^3)} \norm{\nabla \bfphi_n}_{L^{2}(B(R_{n-1}))}.
\]
Then from \eqref{E51} and \eqref{E54},
\[
K_4 \lesssim R^{\frac{5}{2} - \frac{3}{q} - \frac{3}{\gamma}} \norm{\rho}_{L^{\gamma}(B(R_{n-1}))} \norm{\nabla G}_{L^{q}(\bR^3)} \norm{\rho}_{L^{2\gamma_n} \left(B(R_{n-1})\right)}^{\gamma_n}
\lesssim R^{\alpha + \frac{1}{2} - \frac{3}{\gamma}} \norm{\nabla G}_{L^{q}(\bR^3)} \norm{\rho}_{L^{2\gamma_n} \left(B(R_{n-1})\right)}^{\gamma_n}.
\]
Using the assumption on $G$ and $\bu \in D^{1,2}_0(\bR^3; \bR^3)$, we have the following estimate.
\begin{equation}
\label{E55}
\int_{B(R_{n})} \rho^{\gamma} \left(\rho^{\gamma_n}\right)^{\ep} dx 
\lesssim \left( R^{-\frac{3}{2} + \alpha \gamma} + R^{\alpha + \frac{1}{2} - \frac{3}{\gamma}} \right) \norm{\rho}_{L^{2\gamma_n}\left(B(R_{n-1})\right)}^{\gamma_n}.
\end{equation}
We first consider the case $\alpha \ge \frac{2\gamma-3}{\gamma(\gamma-1)}$.
Since $\rho \geq 0$ a.e., we get the following by taking $\liminf_{\ep \to 0}$ on \eqref{E55};
\begin{equation}
\label{E56}
\int_{B(R_{n})} \rho^{2\gamma_{n+1}} dx  
\lesssim R^{-\frac{3}{2} + \alpha \gamma}
\norm{\rho}_{L^{2\gamma_n}\left(B(R_{n-1})\right)}^{\gamma_n}.
\end{equation}
Let $a_n$ be the minimum number satisfying
\[
\norm{\rho}_{L^{2\gamma_{n}}(B(R_{n-1}))} \lesssim R^{a_n}.
\]
Then $a_1=\alpha$ and from \eqref{E56}, we have 
\begin{equation}
\label{E57}
2\gamma_{n+1} a_{n+1} = \alpha \gamma - \frac{3}{2} + \gamma_n a_n.
\end{equation}
If we denote $b_n = (2^n-1) a_n \gamma$, then \eqref{E57} turns into 
\[
b_{n+1} - b_n = 2^n \left( \alpha \gamma - \frac{3}{2} \right).
\]
Therefore, we have 
\[
a_{n+1} =\alpha - \frac{3(2^n-1)}{(2^{n+1}-1)\gamma}
\]
and
\begin{equation}
\label{E58}
\norm{\rho}_{L^{2\gamma_{n+1}}(B(R))} \lesssim R^{\alpha - \frac{3(2^n-1)}{(2^{n+1}-1)\gamma}}.
\end{equation}
Taking $\liminf_{n \to \infty}$ both sides on \eqref{E58}, we obtain
\[
\sup_{R > 1} R^{- \left( \alpha - \frac{3}{2\gamma} \right)} \norm{\rho}_{L^{2\gamma}(B(R))} < \infty
\]
if $\frac{2\gamma-3}{\gamma(\gamma-1)} \le \alpha \le \frac{3}{\gamma}$.
If $0 \le \alpha < \frac{2\gamma-3}{\gamma(\gamma-1)}$, \eqref{E56} becomes
\[
\int_{B(R_{n})} \rho^{2\gamma_{n+1}} dx  
\lesssim R^{\alpha + \frac{1}{2} - \frac{3}{\gamma}}
\norm{\rho}_{L^{2\gamma_n}\left(B(R_{n-1})\right)}^{\gamma_n}.
\]
Repeating the same argument, we can easily obtain
\[
\sup_{R > 1} R^{- \left( \frac{\alpha}{\gamma} + \frac{1}{2\gamma} - \frac{3}{\gamma^2} \right)} \norm{\rho}_{L^{2\gamma}(B(R))} < \infty.
\]
This completes the proof of Proposition \ref{P1}.
\end{proof}

As a result of Proposition \ref{P1}, we can obtain the following version of higher integrability of $\rho$.

\begin{corollary}
\label{C1}
Let $(\rho, \bu)$ be a weak solution of \eqref{E11} and \eqref{E12} in $\bR^3$ with $p(\rho) = \rho^{\gamma}$ with $\gamma \ge 6$ and $G \in D_{0}^{1,q}(\bR^3)$ for some $\frac{6\gamma}{5\gamma-6} \le q \le \frac{3}{2}$.
If $\rho \in L^6(\bR^3) \cap L^{\gamma}(\bR^3)$, then $\rho \in L^{2\gamma}(\bR^3)$.
\end{corollary}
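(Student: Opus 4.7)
The plan is to retrace the proof of Proposition~\ref{P1} rather than appeal to it as a black box, and use the global bound $\rho \in L^{6}(\bR^3)$ to kill the residual $R$-growth that would otherwise appear. Since $\rho \in L^{\gamma}(\bR^3)$, hypothesis \eqref{E51} holds with $\alpha = 0$, but a direct application of Proposition~\ref{P1} only delivers $\|\rho\|_{L^{2\gamma}(B(R))} \lesssim R^{(\gamma-6)/(2\gamma^2)}$; the exponent is strictly positive whenever $\gamma > 6$, so this is not enough to conclude.

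I would therefore reopen the proof of Proposition~\ref{P1} and revisit the bounds on the terms $K_3$ and $K_4$. In that proof the $R$-factors of the form $R^{\alpha + 1/2 - 3/\gamma}$ are generated by H\"older conversions on the ball, converting $\|\rho\|_{L^6(B(R))}$ (respectively $\|\rho\|_{L^{6q/(5q-6)}(B(R))}$) into $\|\rho\|_{L^{\gamma}(B(R))}$. The crux is that with $\rho \in L^{6}(\bR^3)$ available, these conversions become unnecessary. Moreover, the range assumption $\tfrac{6\gamma}{5\gamma-6}\le q \le \tfrac{3}{2}$ is exactly equivalent to $6 \le \tfrac{6q}{5q-6} \le \gamma$, so log-convexity interpolation between $L^{6}(\bR^3)$ and $L^{\gamma}(\bR^3)$ gives $\rho \in L^{6q/(5q-6)}(\bR^3)$ as well. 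Concretely, I would estimate
\[
K_3 \lesssim \|\rho\|_{L^{6}(\bR^3)}\,\|\bu\|_{L^{6}(\bR^3)}^{2}\,\|\nabla \bfphi_n\|_{L^{2}(B(R_{n-1}))}, \qquad K_4 \lesssim \|\rho\|_{L^{6q/(5q-6)}(\bR^3)}\,\|\nabla G\|_{L^{q}(\bR^3)}\,\|\nabla \bfphi_n\|_{L^{2}(B(R_{n-1}))},
\]
both with no $R$-factor. The $K_1$ and $K_2$ bounds (decaying in $R$, respectively $R$-independent) are left unchanged.

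With these improvements, the master inequality becomes the $R$-uniform recursion
\[
\|\rho\|_{L^{2\gamma_{n+1}}(B(R_n))}^{2\gamma_{n+1}} \le C\,\|\rho\|_{L^{2\gamma_n}(B(R_{n-1}))}^{\gamma_n},
\]
with $C$ independent of both $R \ge 1$ and $n$. Setting $X_n = \|\rho\|_{L^{2\gamma_n}(B(R_{n-1}))}$, the starting point $X_1 \le \|\rho\|_{L^{\gamma}(\bR^3)}$ is finite and independent of $R$, and the contraction exponent $\gamma_n/(2\gamma_{n+1}) < \tfrac{1}{2}$ drives a Moser-type induction yielding $X_n \le M$ uniformly in $n$ and $R$. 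Letting $n \to \infty$, so that $R_{n-1} \downarrow R$ and $2\gamma_n \uparrow 2\gamma$, delivers $\|\rho\|_{L^{2\gamma}(B(R))} \le M$ independently of $R$, which is the desired conclusion $\rho \in L^{2\gamma}(\bR^3)$.

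The main technical point to verify is that the implicit constants in the Bogovskii, H\"older and Sobolev inequalities remain uniform as $n$ varies; this should be routine because $\gamma_n$ is monotone and bounded away from $0$ and $\gamma$ for $n \ge 1$, and only the exponent $p = 2$ is used in the Bogovskii bound \eqref{E54}, whose constant on $B(R_{n-1})$ is scale-invariant.
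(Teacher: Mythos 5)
Your proof is correct and follows essentially the same route as the paper: both reopen the $K_1$--$K_4$ decomposition from Proposition~\ref{P1}, use the global $L^6$ and (via the equivalence $\frac{6\gamma}{5\gamma-6}\le q\le\frac{3}{2}\Leftrightarrow 6\le\frac{6q}{5q-6}\le\gamma$ and interpolation) $L^{6q/(5q-6)}$ bounds to eliminate the $R$-growth in $K_3$ and $K_4$, and then run the resulting $R$-uniform iteration to push the exponent from $\gamma$ up to $2\gamma$. The only cosmetic difference is that you phrase the iteration as a Moser-type contraction on the norms $X_n$, whereas the paper tracks the $R$-exponents $a_n$ via the linear recurrence $2\gamma_{n+1}a_{n+1}=\gamma_n a_n$ with $a_1=0$; these are equivalent, and your explicit remark on $n$-uniformity of the Bogovskii and Sobolev constants is a welcome extra care that the paper leaves implicit.
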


\begin{proof}
Recall from Proposition \ref{P1} that 
\begin{align*}
\int_{B(R_{n-1})} \rho^\gamma \left(\rho^{\gamma_n}\right)^{\ep} \varphi_{R_n, R_{n-1}} dx
&= \int_{B(R_{n-1})} \rho^\gamma dx \fint_{B(R_{n-1})} \left(\rho^{\gamma_n}\right)^{\ep} \varphi_{R_n, R_{n-1}} dx \\
&\quad + \int_{B(R_{n-1})} \mathbb S(\nabla \bu) : \nabla  \bfphi_{n} dx 
- \int_{B(R_{n-1})} \rho \bu \otimes \bu : \nabla \bfphi_{n} dx \\
&\quad - \int_{B(R_{n-1})} \rho \nabla G \cdot \bfphi_n dx \\
&:= K_1 + K_2 + K_3 + K_4. 
\end{align*}
This time, we shall estimate each term using 
\[
\sup_{R>0} \left( \norm{\rho}_{L^6(B(R))} + \norm{\rho}_{L^{\gamma}(B(R))} \right) < \infty. 
\]
As an analogue of Proposition \ref{P1}, we can easily obtain
\begin{align*}
K_1 &\lesssim R^{-\frac{3}{2}} \norm{\rho}_{L^{\gamma}\left(B(R_{n-1})\right)}^{\gamma} \norm{\rho}_{L^{2\gamma_n}\left(B(R_{n-1})\right)}^{\gamma_n}
\lesssim \norm{\rho}_{L^{2\gamma_n}\left(B(R_{n-1})\right)}^{\gamma_n}, \\
K_2 &\lesssim \norm{\nabla \bu}_{L^2(\bR^3)} \norm{\rho}_{L^{2\gamma_n}\left(B(R_{n-1})\right)}^{\gamma_n} \lesssim \norm{\rho}_{L^{2\gamma_n}\left(B(R_{n-1})\right)}^{\gamma_n},\\
K_4 &\lesssim \norm{\rho}_{L^{\frac{6q}{5q-6}}(B(R_{n-1}))} \norm{\nabla G}_{L^{q}(\bR^3)} \norm{\rho}_{L^{2\gamma_n}\left(B(R_{n-1})\right)}^{\gamma_n} \lesssim \norm{\rho}_{L^{2\gamma_n}\left(B(R_{n-1})\right)}^{\gamma_n}.
\end{align*}
Note that $6 \le \frac{6q}{5q-6} \le \gamma$.
For $K_3$, we use H\"older's inequality to get
\[
K_3 \lesssim 
\norm{\rho}_{L^{6}(B(R_{n-1}))}
\norm{\bu}_{L^{6}(\bR^3)}^2
\norm{\nabla \bfphi_n}_{L^{2}(B(R_{n-1}))} 
\lesssim 
\norm{\rho}_{L^{2\gamma_n}(B(R_{n-1}))}^{\gamma_n}.
\]
Hence we have $a_1 = 0$ and
\[
2 \gamma_{n+1} a_{n+1} = \gamma_n a_n
\]
instead of \eqref{E57}.
Repeating the same argument as in Proposition \ref{P1}, we obtain
\[
\sup_{R > 1} \norm{\rho}_{L^{2\gamma}(B(R))} < \infty.
\]
\end{proof}

\begin{corollary}
\label{C11}
Let $\Omega = \bR^3$ and let $(\rho, \bu)$ be a weak solution to \eqref{E11} and \eqref{E12} with $p(\rho) = \rho^{\gamma}$ and $\gamma \ge 6$.
If $G \in D_{0}^{1,\frac{6\gamma}{5\gamma-6}}(\bR^3)$ 
and
\[
\int_{\bR^3} \left( \frac{1}{2} \rho |\bu|^2 + \frac{1}{\gamma-1} \rho^{\gamma} \right) dx < \infty,
\]
then $\rho \in L^{2\gamma}(\bR^3)$.
\end{corollary}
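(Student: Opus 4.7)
The strategy is to show that the energy hypothesis \eqref{E112} supplies exactly the control on $\rho|\bu|^2$ needed to run the Bogovskii test-function argument of Corollary~\ref{C1}, so as to substitute for the missing $\rho \in L^6$ assumption.

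First I would note that \eqref{E112} immediately yields $\rho \in L^\gamma(\bR^3)$ and $\sqrt{\rho}\,\bu \in L^2(\bR^3;\bR^3)$, while Sobolev embedding gives $\bu \in L^6(\bR^3;\bR^3)$ from $\bu \in D^{1,2}_0(\bR^3;\bR^3)$. The crucial observation is that for $\gamma \ge 6$ one has
\[
\rho |\bu|^2 \in L^2(\bR^3),
\]
with norm controlled by the energy and $\|\nabla\bu\|_{L^2}$. Indeed, H\"older's inequality applied to $\rho \cdot |\bu|^2$ with $\rho \in L^\gamma$ and $|\bu|^2 \in L^3$ gives $\rho|\bu|^2 \in L^{3\gamma/(\gamma+3)}(\bR^3)$; since $\gamma \ge 6$ forces $3\gamma/(\gamma+3) \ge 2$, interpolation with $\rho|\bu|^2 \in L^1(\bR^3)$ produces the $L^2$ bound.

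With this in hand, I would then repeat the Bogovskii argument of Corollary~\ref{C1} almost verbatim. Testing the momentum balance \eqref{E16} with $\bfphi_n$ and splitting into $K_1+K_2+K_3+K_4$ as there, the terms $K_1$, $K_2$, $K_4$ are handled exactly as in Corollary~\ref{C1}: $K_1$ uses Jensen together with $\rho \in L^\gamma$; $K_2$ uses $\nabla\bu \in L^2$ and \eqref{E54} at $p=2$; and $K_4$ uses the choice $q = \tfrac{6\gamma}{5\gamma-6}$, which forces $\tfrac{6q}{5q-6} = \gamma$, so that $\|\rho\|_{L^{6q/(5q-6)}(B(R_{n-1}))}$ is globally bounded by $\|\rho\|_{L^\gamma(\bR^3)}$. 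Each of these is $\lesssim \|\rho\|_{L^{2\gamma_n}(B(R_{n-1}))}^{\gamma_n}$ with no positive power of $R$. The only genuinely different step is $K_3$, for which I would use the $L^2$ bound on $\rho|\bu|^2$ established above:
\[
|K_3| \le \|\rho|\bu|^2\|_{L^2(\bR^3)} \, \|\nabla \bfphi_n\|_{L^2(B(R_{n-1}))} \lesssim \|\rho\|_{L^{2\gamma_n}(B(R_{n-1}))}^{\gamma_n}.
\]

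Collecting these estimates produces precisely the recursion appearing in the proof of Corollary~\ref{C1}, namely $a_1 = 0$ and $2\gamma_{n+1} a_{n+1} = \gamma_n a_n$, whence $a_n \equiv 0$. Passing to $\liminf_{n\to\infty}$ yields $\sup_{R>1}\|\rho\|_{L^{2\gamma}(B(R))} < \infty$, i.e., $\rho \in L^{2\gamma}(\bR^3)$. The only nontrivial point is the $L^2$ estimate for $\rho|\bu|^2$, which is exactly where $\gamma \ge 6$ is needed and where the finite-energy hypothesis replaces the $\rho \in L^6$ assumption of Corollary~\ref{C1}; the rest of the proof is a near-verbatim repetition of that earlier argument.
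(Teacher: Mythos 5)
Your proof is correct and follows essentially the same route as the paper: both deduce $\rho\bu\otimes\bu \in L^2(\bR^3)$ by interpolating between $L^1$ (from the energy hypothesis) and $L^{3\gamma/(\gamma+3)}$ (from $\rho\in L^\gamma$, $\bu\in L^6$), using $\gamma\ge6$ to ensure the upper exponent is at least $2$, and then rerun the Bogovskii iteration of Corollary~\ref{C1} with only the $K_3$ estimate modified. The paper's displayed bound for $K_3$ has a spurious square on $\|\rho\bu\otimes\bu\|_{L^2}$ (a typo), and your version without the square is the intended Hölder estimate.
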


\begin{proof}
First note that $\rho \in L^{\gamma}(\bR^3)$ and
\[
\rho \bu \otimes \bu \in L^1 \cap L^{\frac{3\gamma}{\gamma+3}}(\bR^3; \bR^3 \times \bR^3).
\]
Since $\rho \bu \otimes \bu \in L^2(\bR^3; \bR^3 \times \bR^3)$, we can estimate $K_3$ as
\[
K_3 \lesssim 
\norm{\rho \bu \otimes \bu}_{L^{2}(\bR^3)}^2
\norm{\nabla \bfphi_n}_{L^{2}(B(R_{n-1}))} 
\lesssim 
\norm{\rho}_{L^{2\gamma_n}(B(R_{n-1}))}^{\gamma_n}.
\]
For $K_4$, we have
\[
K_4 \lesssim \norm{\rho}_{L^{\gamma}(B(R_{n-1}))} \norm{\nabla G}_{L^{\frac{6\gamma}{5\gamma-6}}(\bR^3)} \norm{\rho}_{L^{2\gamma_n}\left(B(R_{n-1})\right)}^{\gamma_n}
\lesssim \norm{\rho}_{L^{2\gamma_n}\left(B(R_{n-1})\right)}^{\gamma_n}.
\]
We can get the desired result by repeating the same argument for the other terms.
\end{proof}

\begin{prop}
\label{P2}
Let $(\rho, \bu)$ be a weak solution of \eqref{E11} and \eqref{E12} in $\bR^3$ with $p(\rho) = \rho^{\gamma}$ with $3 < \gamma < 6$ and $G \in D_{0}^{1,\frac{6\gamma}{5\gamma-6}}(\bR^3)$.
If 
\begin{equation}
\label{E59}
\sup_{R > 1} R^{-\alpha} \norm{\rho}_{L^{\gamma}(B(R))} < \infty
\end{equation}
for some 
$0 \le \alpha \le \frac{3}{\gamma}$, then we have the followings:
\begin{itemize}
\item
If $\frac{3}{2(\gamma-1)} \le \alpha \le \frac{3}{\gamma}$, then
\begin{equation}
\label{E510}
\sup_{R > 1} R^{- \left( \alpha - \frac{3}{2\gamma} \right)} \norm{\rho}_{L^{2\gamma}(B(R))} < \infty.
\end{equation}
\item
If $\frac{1}{2(\gamma-1)} \le \alpha < \frac{3}{2(\gamma-1)}$, then
\begin{equation}
\label{E511}
\sup_{R > 1} R^{-\frac{\alpha}{\gamma}} \norm{\rho}_{L^{2\gamma}(B(R))} < \infty.
\end{equation}
\item
If $0 \le \alpha < \frac{1}{2(\gamma-1)}$, then
\begin{equation}
\label{E511b}
\sup_{R > 1} R^{- \left( \frac{\alpha}{\gamma^2} + \frac{1}{2\gamma^2} \right)} \norm{\rho}_{L^{2\gamma}(B(R))} < \infty.
\end{equation}
\end{itemize}
\end{prop}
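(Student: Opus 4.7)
The proof proceeds in direct analogy with Proposition \ref{P1}. Introducing the cutoffs $R_n = (1+2^{-n})R$, $\gamma_n = (1-2^{-n})\gamma$, I would define the Bogovskii corrector
\[
\bfphi_n = \bog\!\left( (\rho^{\gamma_n})^{\ep} \varphi_{R_n, R_{n-1}} - \fint_{B(R_{n-1})} (\rho^{\gamma_n})^{\ep} \varphi_{R_n, R_{n-1}}\, dx \right),
\]
insert $\bfvarphi = \bfphi_n$ into the weak formulation \eqref{E16}, and arrive at an identity
\[
\int_{B(R_{n-1})} \rho^{\gamma} (\rho^{\gamma_n})^{\ep} \varphi_{R_n, R_{n-1}}\, dx = K_1 + K_2 + K_3 + K_4,
\]
with the same four terms as in Proposition \ref{P1}. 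For $K_1$, $K_2$, and $K_4$ the estimates from Proposition \ref{P1} carry over essentially verbatim, contributing the factors $R^{-3/2 + \alpha\gamma}$ (from $K_1$) and $R^{\alpha + 1/2 - 3/\gamma}$ (from $K_4$, using $G \in D^{1,\frac{6\gamma}{5\gamma-6}}_0$) times $\|\rho\|_{L^{2\gamma_n}(B(R_{n-1}))}^{\gamma_n}$.

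The key difference, and the main obstacle, lies in the bound on the convective term $K_3 = -\int \rho \bu \otimes \bu : \nabla \bfphi_n$. The natural H\"older estimate uses $\rho \in L^{\gamma}$ and $\bu \in L^6$, forcing $\nabla \bfphi_n \in L^{p}$ with $p = \frac{3\gamma}{2\gamma - 3}$. For $\gamma \ge 6$ one has $p \le 2$, so the volume factor $R^{3(1/p - 1/2)} = R^{1/2 - 3/\gamma}$ can be extracted and Bogovskii yields $\|\nabla \bfphi_n\|_{L^2} \lesssim \|\rho\|_{L^{2\gamma_n}}^{\gamma_n}$. However, for $3 < \gamma < 6$ we have $p > 2$, so the volume inequality reverses and one must invoke Bogovskii directly in $L^p$, giving $\|\nabla \bfphi_n\|_{L^p} \lesssim \|\rho\|_{L^{p\gamma_n}}^{\gamma_n}$ with $p\gamma_n > 2\gamma_n$. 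I would resolve this by interpolating
\[
\|\rho\|_{L^{p\gamma_n}(B(R_{n-1}))} \le \|\rho\|_{L^{\gamma}(B(R_{n-1}))}^{1-\theta}\, \|\rho\|_{L^{2\gamma_n}(B(R_{n-1}))}^{\theta}
\]
with $\theta \in (0,1)$ determined by $\frac{1}{p\gamma_n} = \frac{1-\theta}{\gamma} + \frac{\theta}{2\gamma_n}$, and applying the hypothesis \eqref{E59} to the $L^\gamma$-factor. The resulting bound on $K_3$ is of the form $R^{h(\alpha,\gamma,n)} \|\rho\|_{L^{2\gamma_n}(B(R_{n-1}))}^{\theta \gamma_n}$, where the exponent $\theta$ on the $L^{2\gamma_n}$-norm is strictly smaller than~$1$, unlike in Proposition \ref{P1}.

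Step 3 of the iteration therefore yields a recursion of the form
\[
2\gamma_{n+1}\, a_{n+1} = \max\!\Big\{ -\tfrac{3}{2} + \alpha\gamma + \gamma_n a_n,\; h(\alpha,\gamma) + \theta\gamma_n a_n \Big\},
\]
in the notation of Proposition \ref{P1}. The three threshold values $\alpha = \frac{3}{2(\gamma-1)}$ and $\alpha = \frac{1}{2(\gamma-1)}$ correspond precisely to the transitions between which term dominates and to whether the recursion has a contractive factor $\theta < 1$ (giving a genuine fixed point like $\alpha/\gamma$ or $\alpha/\gamma^2 + 1/(2\gamma^2)$) rather than the additive-drift structure of Proposition \ref{P1} (which only covers $\alpha - \frac{3}{2\gamma}$). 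Solving each recursion by the same substitution $b_n = (2^n-1)a_n \gamma$ from Proposition \ref{P1} in the first case, and by a standard geometric argument for $\theta < 1$ in the remaining two, and then passing to $\liminf$ as in \eqref{E58}, produces exactly the three bounds \eqref{E510}--\eqref{E511b}. I expect the hardest case to verify to be the intermediate range $\frac{1}{2(\gamma-1)} \le \alpha < \frac{3}{2(\gamma-1)}$, where identifying the correct interpolation exponent and checking that the resulting fixed point is indeed $\alpha/\gamma$ requires careful bookkeeping.
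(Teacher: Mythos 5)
There is a genuine gap in your treatment of the convective term $K_3$, and it sinks the whole scheme. You set $p = \frac{3\gamma}{2\gamma-3}$ (so that $\frac{1}{\gamma} + \frac{2}{6} + \frac{1}{p} = 1$) and propose to bound $\norm{\nabla\bfphi_n}_{L^p} \lesssim \norm{\rho}_{L^{p\gamma_n}}^{\gamma_n}$ and then \emph{interpolate} $\norm{\rho}_{L^{p\gamma_n}}$ between $\norm{\rho}_{L^\gamma}$ and $\norm{\rho}_{L^{2\gamma_n}}$. But for $3 < \gamma < 6$ one has $p > 2$, hence $p\gamma_n > 2\gamma_n$; and $2\gamma_n \ge \gamma$ for all $n\ge 1$. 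So $p\gamma_n$ lies strictly \emph{above} both endpoints $\gamma$ and $2\gamma_n$. Solving your defining relation $\frac{1}{p\gamma_n} = \frac{1-\theta}{\gamma} + \frac{\theta}{2\gamma_n}$ gives $\theta > 1$, not $\theta \in (0,1)$: this is an extrapolation, not an interpolation, and the inequality you write is false. In particular your subsequent claim that ``the exponent $\theta$ on the $L^{2\gamma_n}$-norm is strictly smaller than~$1$'' is incorrect, the ``contractive'' recursion never appears, and the argument does not close for any of the three ranges of $\alpha$.

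The paper's proof avoids the interpolation problem entirely by running a \emph{two-stage} bootstrap. In a preliminary stage it replaces the $\gamma_n$-sequence by the slower sequence $\widetilde\gamma_n = (2\gamma-3)\bigl(1 - (2/3)^n\bigr)$, sets $c_n = \gamma + \widetilde\gamma_n$, and observes that $3\widetilde\gamma_{n+1} = 2c_n - 3$. This identity is engineered so that the Bogovskii exponent in the convective bound matches the previous step exactly: the H\"older pairing gives $\norm{\nabla\widetilde\bfphi_n}_{L^{3c_{n-1}/(2c_{n-1}-3)}} \lesssim \norm{\rho}_{L^{(3c_{n-1}/(2c_{n-1}-3))\widetilde\gamma_n}}^{\widetilde\gamma_n} = \norm{\rho}_{L^{c_{n-1}}}^{\widetilde\gamma_n}$, so $I_3 \lesssim \norm{\rho}_{L^{c_{n-1}}}^{1+\widetilde\gamma_n}$ with no interpolation needed. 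After $k$ steps (with $k$ chosen so that $c_k \ge 6$) this yields a polynomial bound for $\norm{\rho}_{L^{c_k}(B(R))}$, and interpolating between $L^\gamma$ and $L^{c_k}$ gives $\norm{\rho}_{L^6(B(R))} \lesssim R^\alpha$ (or the smaller exponent in the other ranges of $\alpha$). Only then does the paper revert to the Proposition~\ref{P1} iteration with $\gamma_n$, where the convective term $K_3$ is controlled by the $L^6$ norm of $\rho$ exactly as in the $\gamma\ge 6$ case, and the recursion for $a_n$ closes. The three thresholds for $\alpha$ arise from tracking which term dominates in the preliminary-stage recursion, and taking $\liminf_{n\to\infty}$ as in \eqref{E58} then yields \eqref{E510}--\eqref{E511b}. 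This auxiliary exponent sequence is the essential idea you are missing.
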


\begin{proof}
\textbf{Step 1)}
Fix $3 < \gamma < 6$.
Then there exists $k \in \N$ such that
\[
\frac{9 - 3 \cdot \left( \frac{2}{3} \right)^{k}}{3 - 2 \cdot \left( \frac{2}{3} \right)^k} \le \gamma < \frac{9 - 3 \cdot \left( \frac{2}{3} \right)^{k-1}}{3 - 2 \cdot \left( \frac{2}{3} \right)^{k-1}}.
\]
Consider a sequence defined by 
\[
\widetilde \gamma_n =  (2\gamma -3) \left( 1 - \left( \frac{2}{3} \right)^{n} \right).
\]
We also denote
\[
c_n = \gamma + \widetilde \gamma_n
\]
and $d_n$ be a (minimum) number satisfying 
\begin{equation}
\label{E512}
\norm{\rho}_{L^{c_n}(B(R_{n-1}))} \lesssim R^{d_n}
\end{equation}
where $R_n = \left( 1 + \frac{1}{2^n}\right)R$.
Note that $c_n < 6$ for $1 \le n \le k-1$, $c_k \ge 6$, and
\[
3 \widetilde \gamma_{n+1} = 2c_n-3.
\]
We shall first obtain $L^p$ estimate of $\rho$ for $p \ge 6$ as $c_k \ge 6$.
Define the function
\[
\widetilde \bfphi_{n} = \bog \left( \left(\rho^{\widetilde \gamma_{n}}\right)^{\ep} \varphi_{R_n, R_{n-1}} 
- \fint_{B(R_{n-1})} \left(\rho^{\widetilde \gamma_n}\right)^{\ep} \varphi_{R_n, R_{n-1}} dx \right).
\]
From the properties of the Bogovskii operator, we have
\begin{equation}
\label{E513}
\divg \widetilde \bfphi_{n} = \left(\rho^{\widetilde \gamma_n}\right)^{\ep} \varphi_{R_n, R_{n-1}} 
- \fint_{B(R_{n-1})} \left(\rho^{\widetilde \gamma_n}\right)^{\ep} \varphi_{R_n, R_{n-1}} dx
\end{equation}
and 
\begin{equation}
\label{E514}
\norm{\nabla \widetilde \bfphi_{n}}_{L^p(B(R_{n-1}))} 
\lesssim_p \norm{\left(\rho^{\widetilde \gamma_n}\right)^{\ep}}_{L^p\left( B(R_{n-1})\right)} 
\lesssim \norm{\rho}_{L^{p \widetilde \gamma_n}\left(B(R_{n-1})\right)}^{\widetilde \gamma_n}
\end{equation}
for all $1 < p < \infty$.
As $\widetilde \bfphi_{n} \in C^{\infty}_c(B(R_{n-1}); \bR^3)$, we have
\[
\int_{B(R_{n-1})} [\rho \bu \otimes \bu : \nabla \widetilde \bfphi_{n} + \rho^\gamma \divg \widetilde \bfphi_{n}] dx 
= \int_{B(R_{n-1})} \mathbb S(\nabla \bu) : \nabla \widetilde \bfphi_{n} dx
- \int_{B(R_{n-1})} \rho \nabla G \cdot \widetilde \bfphi_n dx
\]
by inserting $\bfvarphi = \widetilde \bfphi_{n}$ in \eqref{E16}.
By \eqref{E513}, we have
\begin{align*}
\int_{B(R_{n-1})} \rho^\gamma \divg \widetilde \bfphi_{n} dx 
&= \int_{B(R_{n-1})} \rho^\gamma \left(\rho^{\widetilde \gamma_n}\right)^{\ep} \varphi_{R_n, R_{n-1}} dx \\ 
&\quad - \int_{B(R_{n-1})} \rho^\gamma dx \fint_{B(R_{n-1})} \left(\rho^{\widetilde \gamma_n}\right)^{\ep} \varphi_{R_n, R_{n-1}} dx. 
\end{align*}
Hence,
\begin{align*}
\int_{B(R_{n-1})} \rho^\gamma \left(\rho^{\widetilde \gamma_n}\right)^{\ep} \varphi_{R_n, R_{n-1}} dx
&= \int_{B(R_{n-1})} \rho^\gamma dx \fint_{B(R_{n-1})} \left(\rho^{\widetilde \gamma_n}\right)^{\ep} \varphi_{R_n, R_{n-1}} dx \\
&\quad + \int_{B(R_{n-1})} \mathbb S(\nabla \bu) : \nabla \widetilde \bfphi_{n} dx 
- \int_{B(R_{n-1})} \rho \bu \otimes \bu : \nabla \widetilde \bfphi_{n} dx \\
&\quad - \int_{B(R_{n-1})} \rho \nabla G \cdot \widetilde \bfphi_n dx \\
&:= I_1 + I_2 + I_3 + I_4.  
\end{align*}

\textbf{Step 2)}
We first consider the case $\frac{3}{2(\gamma-1)} \le \alpha \le \frac{3}{\gamma}$.
For $1 \le n \le k$, we have the following estimates.
Using \eqref{E59}, \eqref{E514} and H\"older's inequality, 
\begin{align*}
I_1
&\lesssim R^{\alpha \gamma-\frac{3}{2}} \norm{\rho}_{L^{2\widetilde \gamma_n} \left(B(R_{n-1})\right)}^{\widetilde \gamma_n}, \\
I_2
&\lesssim \norm{\nabla \bu}_{L^2(\bR^3)} 
\norm{\rho}_{L^{2\widetilde \gamma_n} \left(B(R_{n-1})\right)}^{\widetilde \gamma_n}, \\
I_4 
&\lesssim R^{\alpha} \norm{\nabla G}_{L^{\frac{6\gamma}{5\gamma-6}}(\bR^3)} \norm{\rho}_{L^{2\widetilde \gamma_n} \left(B(R_{n-1})\right)}^{\widetilde \gamma_n}.
\end{align*}
From the assumption $G \in D_{0}^{1,\frac{6\gamma}{5\gamma-6}}(\bR^3)$, $\bu \in D^{1,2}_0(\bR^3; \bR^3)$ and $\alpha \ge \frac{3}{2(\gamma-1)}$, we have
\[
I_1 + I_2 + I_4
\lesssim R^{\alpha \gamma-\frac{3}{2}} \norm{\rho}_{L^{2\widetilde \gamma_n} \left(B(R_{n-1})\right)}^{\widetilde \gamma_n}.
\]
By Jensen's inequality and \eqref{E512}, we get
\begin{equation}
\label{E515}
I_1 + I_2 + I_4
\lesssim R^{\alpha \gamma-\frac{3}{2} + \frac{6-c_{n-1}}{2c_{n-1}}} \norm{\rho}_{L^{c_{n-1}} \left(B(R_{n-1})\right)}^{\widetilde \gamma_n}
\lesssim R^{\alpha \gamma - 2 + \frac{3}{c_{n-1}} + d_{n-1} \widetilde \gamma_n}.
\end{equation}
By H\"older's inequality and Sobolev's inequality, we have
\[
I_3 \lesssim \norm{\rho}_{L^{c_{n-1}} \left(B(R_{n-1})\right)} 
\norm{\nabla \bu}_{L^2(\bR^3)}^2 
\norm{\nabla \widetilde \bfphi_n}_{L^{\frac{3c_{n-1}}{2c_{n-1}-3}} \left(B(R_{n-1})\right)}. 
\]
From \eqref{E514}, \eqref{E512} and the assumption $\bu \in D^{1,2}_0(\bR^3; \bR^3)$, we obtain
\begin{equation}
\label{E516}
I_3 \lesssim \norm{\rho}_{L^{c_{n-1}} \left(B(R_{n-1})\right)}^{1 + \widetilde \gamma_n}
\lesssim R^{d_{n-1}(1+\widetilde \gamma_n)}.
\end{equation}
Combining \eqref{E515} and \eqref{E516},
\[
\int_{B(R_{n})} \rho^{\gamma} \left(\rho^{\widetilde \gamma_n}\right)^{\ep} dx 
\lesssim R^{\alpha \gamma - 2 + \frac{3}{c_{n-1}} + d_{n-1} \widetilde \gamma_n}
+ R^{d_{n-1}(1+\widetilde \gamma_n)}.
\]
If
\[
d_{n-1} \leq \alpha \gamma - 2 + \frac{3}{c_{n-1}},
\]
then we have
\begin{equation} 
\label{E517}
\int_{B(R_{n})} \rho^{\gamma} \left(\rho^{\widetilde \gamma_n}\right)^{\ep} dx 
\lesssim R^{\alpha \gamma - 2 + \frac{3}{c_{n-1}} + d_{n-1} \widetilde \gamma_n}.
\end{equation}
Since $\rho \geq 0$ a.e., we get the following by taking $\liminf_{\ep \to 0}$ on \eqref{E517};
\[
\int_{B(R_{n})} \rho^{c_n} dx  
\lesssim R^{\alpha \gamma - 2 + \frac{3}{c_{n-1}} + d_{n-1} \widetilde \gamma_n}.
\]
From this, we can derive a recurrence relation
\begin{equation}
\label{E518}
c_n d_n = \alpha \gamma - 2 + \frac{3}{c_{n-1}} + d_{n-1} \widetilde \gamma_n
\leq \widetilde \gamma_n d_{n-1} + \alpha \gamma - 1
\end{equation}
where we use $c_{n-1} \ge \gamma >3$ in the last inequality.
If we let $d_n = \alpha - \frac{1}{\gamma} + e_n$, \eqref{E518} becomes
\[
c_nd_n = (\gamma + \widetilde \gamma_n) e_n \le \widetilde \gamma_n e_{n-1}.
\]
Since we can easily compute 
\[
\frac{\widetilde \gamma_n}{\gamma + \widetilde \gamma_n} \le \frac{2}{3},
\]
we obtain
\[
d_n \le \frac{2}{3} d_{n-1} + \frac{1}{3} \left( \alpha - \frac{1}{\gamma} \right).
\]

If we assume
\[
d_{n-1} \ge \alpha \gamma - 2 + \frac{3}{c_{n-1}},
\]
then 
\[
\int_{B(R_{n})} \rho^{\gamma} \left(\rho^{\widetilde \gamma_n}\right)^{\ep} dx 
\lesssim R^{(1 + \widetilde \gamma_n) d_{n-1}}.
\]
instead of \eqref{E517}.
This time, we derive a recurrence relation 
\[
c_n d_n = (1 + \widetilde \gamma_n) d_{n-1}
\]
and using $\frac{1 + \widetilde \gamma_n}{\gamma + \widetilde \gamma_n} \le \frac{2}{3}$, we obtain
\[
d_n \le \frac{2}{3} d_{n-1}.
\]
Combining two relations, we get
\[
d_n \le \frac{2}{3} d_{n-1} + \frac{1}{3} \left( \alpha - \frac{1}{\gamma} \right)
\]
for $\frac{3}{2(\gamma-1)} \le \alpha \le \frac{3}{\gamma}$ case.
Therefore,
\[
d_n \le \alpha - \frac{1}{\gamma} + \frac{2^n}{3^n \gamma}.
\]
That is, 
\begin{equation}
\label{E519}
\norm{\rho}_{L^{c_n}(B(R_n))} \lesssim R^{\alpha - \frac{1}{\gamma} + \frac{2^n}{3^n \gamma}}
\end{equation}
for all $n \ge 0$.

\textbf{Step 3)}
For $0 \le \alpha < \frac{3}{2(\gamma-1)}$ case, we can obtain a similar estimate by performing the same argument. 
This time, we have
\[
I_1 + I_2 + I_4
\lesssim R^{\alpha} \norm{\rho}_{L^{2\widetilde \gamma_n} \left(B(R_{n-1})\right)}^{\widetilde \gamma_n}
\]
and hence
\[
\int_{B(R_{n})} \rho^{\gamma} \left(\rho^{\widetilde \gamma_n}\right)^{\ep} dx 
\lesssim R^{\alpha + \frac{3}{c_{n-1}} - \frac{1}{2} + d_{n-1} \widetilde \gamma_n}
+ R^{d_{n-1}(1+\widetilde \gamma_n)}.
\]
If 
\[
d_{n-1} \ge \alpha + \frac{3}{c_{n-1}} - \frac{1}{2}, 
\]
we get 
\[
d_n \le \frac{2}{3} d_{n-1}
\]
as in the previous step.

If 
\[
d_{n-1} \le \alpha + \frac{3}{c_{n-1}} - \frac{1}{2}, 
\]
then 
\[
c_n d_n = \alpha + \frac{3}{c_{n-1}} - \frac{1}{2} + d_{n-1} \widetilde \gamma_n
\le d_{n-1} \widetilde \gamma_n + \alpha + \frac{1}{2}.
\]
By defining a new sequence $e_n = d_n - \frac{\alpha}{\gamma} - \frac{1}{2\gamma}$, we get
\[
d_n \le \frac{2}{3} d_{n-1} + \frac{1}{3} \left( \frac{\alpha}{\gamma} + \frac{1}{2\gamma} \right).
\]
Combining two relations, we get 
\[
d_n \le \frac{\alpha}{\gamma} + \frac{1}{2\gamma} + \left( \frac{2}{3} \right)^{n} \left( \alpha - \frac{\alpha}{\gamma} - \frac{1}{2\gamma} \right)
\]
for $0 \le \alpha < \frac{3}{2(\gamma-1)}$ case.
That is, 
\begin{equation}
\label{E519b}
\norm{\rho}_{L^{c_n}(B(R_n))} \lesssim R^{\frac{\alpha}{\gamma} + \frac{1}{2\gamma} + \left( \frac{2}{3} \right)^{n} \left( \alpha - \frac{\alpha}{\gamma} - \frac{1}{2\gamma} \right)}
\end{equation}
for all $n \ge 0$.

\textbf{Step 4)}
In order to obtain desired result, recall from proposition \ref{P1} that $\gamma_n = \left( 1 - \frac{1}{2^{n}} \right) \gamma$, \[
\bfphi_{n} = \bog \left( \left(\rho^{\gamma_{n}}\right)^{\ep} \varphi_{R_n, R_{n-1}} 
- \fint_{B(R_{n-1})} \left(\rho^{\gamma_n}\right)^{\ep} \varphi_{R_n, R_{n-1}} dx \right),
\]
and
\begin{align*}
\int_{B(R_{n-1})} \rho^\gamma \left(\rho^{\gamma_n}\right)^{\ep} \varphi_{R_n, R_{n-1}} dx
&= \int_{B(R_{n-1})} \rho^\gamma dx \fint_{B(R_{n-1})} \left(\rho^{\gamma_n}\right)^{\ep} \varphi_{R_n, R_{n-1}} dx \\
&\quad + \int_{B(R_{n-1})} \mathbb S(\nabla \bu) : \nabla  \phi_{n} dx 
- \int_{B(R_{n-1})} \rho \bu \otimes \bu : \nabla \phi_{n} dx \\
&\quad - \int_{B(R_{n-1})} \rho \nabla G \cdot \bfphi_n dx \\
&:= K_1 + K_2 + K_3 + K_4. 
\end{align*}
First, we shall consider $\frac{3}{2(\gamma-1)} \le \alpha \le \frac{3}{\gamma}$ case.

Using the estimates in the proof of Proposition \ref{P1} with $q = \frac{6\gamma}{5\gamma-6}$, we have
\[
K_1 + K_2 + K_4 \lesssim R^{\alpha \gamma - \frac{3}{2}} \norm{\rho}_{L^{2\gamma_n}\left(B(R_{n-1})\right)}^{\gamma_n}
\]
and
\[
K_3 \lesssim \norm{\rho}_{L^{6}(B(R_{n-1}))}
\norm{\bu}_{L^6(\bR^3)}^2
\norm{\nabla \phi_n}_{L^{2}(B(R_{n-1}))}.
\]
From $3 < \gamma < 6$, $c_k \ge 6$, and \eqref{E519}, we can handle $L^6$ norm of $\rho$ as
\[
\norm{\rho}_{L^{6}(B(R_{n-1}))}
\lesssim \norm{\rho}_{L^{\gamma}(B(R_{n-1}))} + \norm{\rho}_{L^{c_k}(B(R_{n-1}))}
\lesssim R^{\alpha} + R^{\alpha - \frac{1}{\gamma} + \frac{2^k}{3^k \gamma}}
\lesssim R^{\alpha}.
\] 
Since $\frac{3}{2(\gamma-1)} \le \alpha$, we have
\[
\int_{B(R_{n})} \rho^{2\gamma_{n+1}} dx  
\lesssim R^{\alpha \gamma -\frac{3}{2} + \gamma_n a_n}
\]
as in Proposition \ref{P1}.
Therefore, we get \eqref{E58} and by taking $\liminf_{n \to \infty}$ both sides, we obtain \eqref{E510}.

\textbf{Step 5)}
If $\frac{1}{2(\gamma-1)} \le \alpha < \frac{3}{2(\gamma-1)}$, we have
\[
d_k \le \frac{\alpha}{\gamma} + \frac{1}{2\gamma} + \left( \frac{2}{3} \right)^{k} \left( \alpha - \frac{\alpha}{\gamma} - \frac{1}{2\gamma} \right)
\le \alpha
\]
and
\[
\norm{\rho}_{L^{6}(B(R_{n-1}))}
\lesssim R^{\alpha}
\]
so that
\[
\int_{B(R_{n})} \rho^{2\gamma_{n+1}} dx  
\lesssim R^{\alpha + \gamma_n a_n}.
\]
This leads to a recurrence relation
\[
2\gamma_{n+1} a_{n+1} = \alpha + \gamma_n a_n.
\]
Transforming to $b_n = (2^n-1) \gamma a_n$, we can easily obtain \eqref{E511}.

Lastly, if $0 \le \alpha < \frac{1}{2(\gamma-1)}$, we have 
\[
d_k
\le \frac{2}{3} \alpha + \frac{1}{3} \left( \frac{\alpha}{\gamma} + \frac{1}{2\gamma} \right)
\le \frac{\alpha}{\gamma} + \frac{1}{2\gamma}
\]
and
\[
\norm{\rho}_{L^{6}(B(R_{n-1}))}
\lesssim R^{\frac{\alpha}{\gamma} + \frac{1}{2\gamma}}.
\]
This time, we obtain
\[
2\gamma_{n+1} a_{n+1} = \frac{\alpha}{\gamma} + \frac{1}{2\gamma} + \gamma_n a_n
\]
and this leads to \eqref{E511b}.
\end{proof}

As a simple result of Proposition \ref{P2}, we can also obtain higher integrability for $\frac{3}{2} < \gamma \le 3$.

\begin{prop}
\label{P3}
Let $(\rho, \bu)$ be a weak solution of \eqref{E11} and \eqref{E12} in $\bR^3$ with $p(\rho) = \rho^{\gamma}$ with $\frac{3}{2} < \gamma \le 3$ and $G \in D_{0}^{1,\frac{6\gamma}{5\gamma-6}}(\bR^3)$.
Suppose
\[
\sup_{R > 1} R^{-\alpha} \norm{\rho}_{L^{\gamma}(B(R))} < \infty
\]
for some 
$0 \le \alpha \le \frac{3}{\gamma}$.
Then we have the following:
\begin{itemize}
\item
If $\frac{3}{2(\gamma-1)} \le \alpha \le \frac{3}{\gamma}$, then
\[
\sup_{R > 1} R^{- \left( \alpha - \frac{1}{\gamma} \right)} \norm{\rho}_{L^{3\gamma-3}(B(R))} < \infty.
\]
\item
If $0 \le \alpha < \frac{3}{2(\gamma-1)}$, then
\[
\sup_{R > 1} R^{- \left( \frac{\alpha}{\gamma} + \frac{1}{2\gamma} \right)} \norm{\rho}_{L^{3\gamma-3}(B(R))} < \infty.
\]
\end{itemize}
\end{prop}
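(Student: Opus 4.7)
The plan is to mimic Steps~1--3 of the proof of Proposition~\ref{P2} using the Bogovskii test function
\[
\widetilde\bfphi_n = \bog\!\left( \left(\rho^{\widetilde\gamma_n}\right)^\ep \varphi_{R_n,R_{n-1}} - \fint_{B(R_{n-1})} \left(\rho^{\widetilde\gamma_n}\right)^\ep \varphi_{R_n,R_{n-1}}\, dx \right),
\]
with $\widetilde\gamma_n = (2\gamma-3)(1-(2/3)^n)$, while skipping the $L^6$-bootstrap carried out in Steps~4--5 there. Setting $c_n = \gamma + \widetilde\gamma_n$, the sequence $(c_n)$ increases monotonically from $\gamma$ up to $3\gamma-3$; since $\tfrac{3}{2} < \gamma \le 3$, we have $c_n \in [\gamma, 3\gamma-3)$ for every $n$, so in particular $c_n > \tfrac{3}{2}$, which keeps every subsequent H\"older/Sobolev exponent admissible.

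First I would insert $\bfvarphi = \widetilde\bfphi_n$ into the momentum balance \eqref{E16} to obtain, exactly as in Proposition~\ref{P2},
\[
\int_{B(R_{n-1})} \rho^\gamma \left(\rho^{\widetilde\gamma_n}\right)^\ep \varphi_{R_n,R_{n-1}}\, dx = I_1 + I_2 + I_3 + I_4,
\]
and estimate the four terms using H\"older's inequality, the Sobolev embedding for $\bu$, and the Bogovskii bound \eqref{E514}. The algebraic identity $3\widetilde\gamma_{n+1} = 2 c_n - 3$ makes the H\"older/Bogovskii pair for $I_3$ close:
\[
I_3 \lesssim \norm{\rho}_{L^{c_{n-1}}(B(R_{n-1}))} \norm{\nabla \bu}_{L^2(\bR^3)}^2 \norm{\nabla \widetilde\bfphi_n}_{L^{\frac{3c_{n-1}}{2c_{n-1}-3}}(B(R_{n-1}))} \lesssim \norm{\rho}_{L^{c_{n-1}}(B(R_{n-1}))}^{1+\widetilde\gamma_n}.
\]
For $I_4$, the choice $q = \tfrac{6\gamma}{5\gamma-6}$ yields $\tfrac{6q}{5q-6} = \gamma$, so only the hypothesis $\sup_R R^{-\alpha}\norm{\rho}_{L^\gamma(B(R))} < \infty$ is used; $I_1$ and $I_2$ are handled by the same hypothesis together with Jensen's inequality, exactly as in Proposition~\ref{P2}.

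Next I would follow the recurrence of Steps~2--3 of Proposition~\ref{P2} verbatim. In the range $\tfrac{3}{2(\gamma-1)} \le \alpha \le \tfrac{3}{\gamma}$, one derives $d_n \le \tfrac{2}{3}\, d_{n-1} + \tfrac{1}{3}\bigl(\alpha - \tfrac{1}{\gamma}\bigr)$ with $d_0 = \alpha$, which iterates to $d_n \le \alpha - \tfrac{1}{\gamma} + \tfrac{1}{\gamma}(2/3)^n$. In the complementary range $0 \le \alpha < \tfrac{3}{2(\gamma-1)}$, the analogous recurrence gives $d_n \le \tfrac{\alpha}{\gamma} + \tfrac{1}{2\gamma} + (2/3)^n\bigl(\alpha - \tfrac{\alpha}{\gamma} - \tfrac{1}{2\gamma}\bigr)$. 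Since $c_n \nearrow 3\gamma - 3$, taking $\liminf_{n\to\infty}$ on $\norm{\rho}_{L^{c_n}(B(R_n))} \lesssim R^{d_n}$ and invoking Fatou's lemma produces the two announced bounds.

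The only real subtlety is guaranteeing that the constants in \eqref{E514} and in the Sobolev embedding are uniform in $n$, but this is immediate from the fact that the dual exponents $\tfrac{3c_{n-1}}{2c_{n-1}-3}$ live in a compact subinterval of $(1,\infty)$, since $c_{n-1} \in [\gamma, 3\gamma-3)$ and $\gamma > \tfrac{3}{2}$. Beyond this routine verification no new idea is needed; the reason the target integrability stops at $3\gamma-3$ instead of $2\gamma$ is precisely that, for $\gamma \le 3$, the iteration never reaches the $c_n \ge 6$ regime that in Proposition~\ref{P2} triggered the second round of bootstrapping through $\norm{\rho}_{L^6}$.
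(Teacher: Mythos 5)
Your proposal is essentially the paper's own argument: the paper's proof is the one-liner ``take $\liminf_{n\to\infty}$ in \eqref{E519} and \eqref{E519b}'', and you are simply spelling out that those bounds come from repeating Steps~1--3 of Proposition~\ref{P2} with the iteration $c_n\nearrow 3\gamma-3$ and omitting Steps~4--5 (which require a finite $k$ with $c_k\ge6$). That much is faithful.

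There is, however, a step in both your write-up and in the implicit import of \eqref{E519}--\eqref{E519b} that fails when $\gamma$ is close to $\tfrac{3}{2}$. The clean recurrence you quote in the second regime, $d_n\le\tfrac{2}{3}d_{n-1}+\tfrac{1}{3}\bigl(\tfrac{\alpha}{\gamma}+\tfrac{1}{2\gamma}\bigr)$, is obtained from the exact inequality
\[
c_n d_n \;\le\; \alpha + \frac{3}{c_{n-1}} - \frac{1}{2} + \widetilde\gamma_n\, d_{n-1}
\]
by replacing $\tfrac{3}{c_{n-1}}$ with $1$, i.e.\ by using $c_{n-1}\ge3$. Under the hypotheses of Proposition~\ref{P2} this is automatic ($c_{n-1}\ge\gamma>3$), but for $\tfrac{3}{2}<\gamma<2$ one has $c_n\nearrow 3\gamma-3<3$, so $\tfrac{3}{c_{n-1}}>1$ for every $n$ and the $\tfrac{2}{3}$-contraction towards $\tfrac{\alpha}{\gamma}+\tfrac{1}{2\gamma}$ is lost. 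Solving the exact asymptotic recurrence $\gamma\, d^{*} = \alpha + \tfrac{1}{\gamma-1} - \tfrac{1}{2}$ gives
\[
d^{*} \;=\; \frac{\alpha}{\gamma} + \frac{1}{2\gamma} + \frac{2-\gamma}{\gamma(\gamma-1)},
\]
with an extra term that is strictly positive for $\gamma<2$ and nonpositive for $2\le\gamma\le3$. So the iteration delivers the announced exponent only for $2\le\gamma\le3$; for $\tfrac{3}{2}<\gamma<2$ it yields the weaker exponent $d^{*}$ above, and the statement in the second bullet does not follow from this argument as written. (The first bullet is vacuous for $\gamma<2$ since $\tfrac{3}{2(\gamma-1)}>\tfrac{3}{\gamma}$ there, so it is unaffected.) The admissibility check $c_n>\tfrac{3}{2}$ that you include is necessary for the H\"older/Bogovskii exponents, but the binding constraint you have not addressed is precisely this $c_{n-1}\ge3$ inequality.
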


\begin{proof}
Taking $\liminf_{n \to \infty}$ on \eqref{E519} and \eqref{E519b}, we get desired results.

\end{proof}

Similar to Corollary \ref{C1} and Corollary \ref{C11}, we also consider the case when $\rho$ belongs to $L^{\gamma}(\bR^3)$.

\begin{corollary}
\label{C2}
Let $(\rho, \bu)$ be a weak solution of \eqref{E11} and \eqref{E12} in $\bR^3$ with $p(\rho) = \rho^{\gamma}$ with $3 \le \gamma < 6$ and $G \in D_{0}^{1,\frac{6\gamma}{5\gamma-6}}(\bR^3)$ .
If $\rho \in L^{\gamma}(\bR^3)$, then $\rho \in L^{2\gamma}(\bR^3)$.
\end{corollary}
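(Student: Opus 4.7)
The plan is to combine the polynomial growth estimate produced by Proposition \ref{P2} (applied at $\alpha=0$) with a single self-improving Bogovskii inequality that crucially uses the global hypothesis $\rho\in L^{\gamma}(\bR^{3})$. Proposition \ref{P2} with $\alpha=0$ immediately yields an initial polynomial bound $F(R):=\int_{B(R)}\rho^{2\gamma}\lesssim R^{1/\gamma}$, which by itself is not enough; however, it provides exactly the polynomial input needed to bootstrap to a uniform-in-$R$ bound.

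To set up the self-improving inequality, I test \eqref{E16} with $\bfphi=\bog(\rho^{\gamma}\varphi_{R,2R}-\mathrm{avg})$, so that $\divg\bfphi=\rho^{\gamma}\varphi_{R,2R}-\mathrm{avg}$ and $\|\nabla\bfphi\|_{L^{2}(B(2R))}\lesssim\|\rho\|_{L^{2\gamma}(B(2R))}^{\gamma}$. Isolating $\int_{B(R)}\rho^{2\gamma}$ in the resulting identity and estimating the remaining terms by H\"older and Sobolev, the viscous term contributes $C\|\rho\|_{L^{2\gamma}(B(2R))}^{\gamma}$ (using $\nabla\bu\in L^{2}$), the force term contributes $C\|\rho\|_{L^{\gamma}(\bR^{3})}\|\nabla G\|_{L^{6\gamma/(5\gamma-6)}}\|\rho\|_{L^{2\gamma}(B(2R))}^{\gamma}$ (using the identity $6q/(5q-6)=\gamma$ for $q=6\gamma/(5\gamma-6)$), and the convective term contributes $C\|\rho\|_{L^{6}(B(2R))}\|\bu\|_{L^{6}}^{2}\|\rho\|_{L^{2\gamma}(B(2R))}^{\gamma}$. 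The essential step is then the interpolation $\|\rho\|_{L^{6}(B(2R))}\le\|\rho\|_{L^{\gamma}(\bR^{3})}^{\theta}\|\rho\|_{L^{2\gamma}(B(2R))}^{1-\theta}$ with $\theta=2-6/\gamma\in[0,1]$, which is legitimate precisely because $\gamma\le 6\le 2\gamma$ in the range $3\le\gamma<6$. Combining, and noting that the average correction is $O(R^{-3})$, one obtains
\[
F(R)\;\le\;C\,F(2R)^{\beta}\;+\;O(R^{-3}),\qquad \beta=\frac{\gamma+1-\theta}{2\gamma}=\frac{1}{2}-\frac{1}{2\gamma}+\frac{3}{\gamma^{2}},
\]
and a direct check gives $\beta<1$ throughout $\gamma\ge 3$ (indeed $\beta=2/3$ at $\gamma=3$ and $\beta\to 1/2$ as $\gamma\to 6$).

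Iterating the inequality $k$ times yields $F(R)\le C^{1/(1-\beta)}\,F(2^{k}R)^{\beta^{k}}+O(1)$; plugging in the initial polynomial bound from Proposition \ref{P2} gives $F(2^{k}R)^{\beta^{k}}\lesssim(2^{k}R)^{\beta^{k}/\gamma}\to 1$ as $k\to\infty$, since $\beta^{k}\to 0$ and $k\beta^{k}\to 0$. Hence $F(R)\le C_{\ast}$ uniformly in $R$, and monotone convergence gives $\rho\in L^{2\gamma}(\bR^{3})$. The main obstacle is to make the self-improving estimate sharp enough that $\beta<1$: this depends on simultaneously closing the H\"older bookkeeping in all three right-hand-side terms and on the availability of the interpolation $L^{6}\subset L^{\gamma}\cap L^{2\gamma}$, both of which are guaranteed exactly in the regime $3\le\gamma<6$. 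Outside this regime the mechanism breaks down: for $\gamma<3$ one has $\beta\ge 1$, while for $\gamma\ge 6$ the simpler iteration of Corollary \ref{C1} already applies.
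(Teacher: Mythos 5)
Your argument takes a genuinely different route from the paper. The paper's proof of Corollary~\ref{C2} proceeds in two global stages: first it establishes $\rho\in L^{c_k}(\bR^3)$ for some $c_k\ge 6$ by testing with the truncated Bogovskii-type fields $\bfphi_{\ep,\alpha,n}$ built from $[(\rho^\ep-1)^\alpha]^+$, exploiting that $\rho\in L^\gamma(\bR^3)$ makes $(\rho-1)^+\in L^1\cap L^\gamma(\bR^3)$ and therefore avoids any polynomial growth in $R$; only then, with $\rho\bu\otimes\bu\in L^2(\bR^3)$ in hand, does it re-run the Corollary~\ref{C11} iteration in $\gamma_n$ to get $\rho\in L^{2\gamma}(\bR^3)$. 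You instead \emph{accept} the polynomial growth in $R$ coming from the local propositions, derive a single self-improving inequality $F(R)\lesssim F(2R)^\beta+R^{-3}$ with $\beta<1$ via the interpolation $L^6\subset[L^\gamma,L^{2\gamma}]$ (valid precisely for $3\le\gamma\le 6$), and then kill the growth by iterating $R\mapsto 2R$. That is a cleaner mechanism in the sense that it replaces the paper's double induction (on the exponent sequence $\widetilde\gamma_n$ and then on $\gamma_n$) with one bootstrap, at the cost of needing the growth seed from Propositions~\ref{P2}/\ref{P3}.

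A few points you should tighten. First, Proposition~\ref{P2} is stated only for $3<\gamma<6$, whereas Corollary~\ref{C2} includes $\gamma=3$; for the endpoint you must instead use Proposition~\ref{P3} with $\alpha=0$, which gives $\|\rho\|_{L^6(B(R))}\lesssim R^{1/6}$ and hence $F(R)\lesssim R$, which is still an adequate seed because any polynomial rate is killed by $\beta^k\to 0$. Second, your interpolation exponent $\theta=2-6/\gamma$ is incorrect: solving $\frac{1}{6}=\frac{\theta}{\gamma}+\frac{1-\theta}{2\gamma}$ gives $\theta=\frac{\gamma}{3}-1$, yielding $\beta=\frac{1}{3}+\frac{1}{\gamma}$; fortunately $\beta<1$ either way, so the slip does not affect the conclusion but the numerology should be fixed. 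Third, $\bfphi=\bog(\rho^\gamma\varphi_{R,2R}-\mathrm{avg})$ is not an admissible $C^1_c$ test field; you must mollify (e.g.\ use $(\rho^\gamma)^\ep$ or $(\rho^{\gamma_n})^\ep$ as in the paper) and pass to the limit, which is straightforward once $\rho\in L^{2\gamma}_{\loc}$ is known but must be stated. Finally, the iteration itself requires slightly more care than ``iterating $k$ times'': the error terms do not telescope away automatically since $A_j^{\beta^j}\to 1$. The clean way is to observe that, once $F(R)\ge 1$ for $R\ge R_0$, the error $O(R^{-3})$ can be absorbed into $CF(2R)^\beta$, giving the multiplicative inequality $F(R)\le C'F(2R)^\beta$ on $[R_0,\infty)$, and then the polynomial seed plus $\beta^k\to 0$ gives $F(R)\le C'^{1/(1-\beta)}$. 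With these repairs the argument is sound and does give a somewhat more economical proof than the one in the paper.
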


\begin{proof}
\textbf{Step 1)}
As in the proof of Proposition \ref{P2}, for $3 < \gamma < 6$, we choose $k \in \N$ such that
\[
\frac{9 - 3 \cdot \left( \frac{2}{3} \right)^{k}}{3 - 2 \cdot \left( \frac{2}{3} \right)^k} \le \gamma < \frac{9 - 3 \cdot \left( \frac{2}{3} \right)^{k-1}}{3 - 2 \cdot \left( \frac{2}{3} \right)^{k-1}}.
\]
Recall the definitions
\[
\widetilde \gamma_n  =  (2\gamma -3) \left( 1 - \left( \frac{2}{3} \right)^{n} \right)
\qand
c_n = \gamma + \widetilde \gamma_n 
\]
For $R_n = \left( 1 + \frac{1}{2^n}\right)R$, we have defined $d_n$ as a minimum number satisfying 
\[
\norm{\rho}_{L^{c_n}(B(R_{n-1}))} \lesssim R^{d_n}.
\]
By obtaining $\rho \in L^{c_k}(\bR^3)$, we shall first obtain $\rho \bu \otimes \bu \in L^2(\bR^3)$.
As $|f^+ - g^+| \le |f-g|$, note that
\[
|(\rho-1)^+ - (\rho^{\ep}-1)^+ | \le |\rho - \rho^{\ep}|.
\]
Since $\rho \in L^{\gamma}(\bR^3)$, we have
\[
(\rho-1)^+ \in L^1 \cap L^{\gamma} (\bR^3)
\]
and hence
\[
(\rho^{\ep}-1)^+ \to (\rho-1)^+
\]
a.e. and in $L^p$ for all $1 \le p \le \gamma$. 
For $\rho \ge 0$, $R > 0$, and $\gamma, \alpha \ge 0$, we have
\begin{align*}
\int \rho^{\gamma} (\rho^{\ep})^{\alpha} \varphi_{R_n,R_{n-1}}^2 dx
&= \int \rho^{\gamma} ((\rho^{\ep})^{\alpha} - 1) \varphi_{R_n,R_{n-1}}^2 dx
+ \int \rho^{\gamma} \varphi_{R_n,R_{n-1}}^2 dx \\
&\le \int_{\set{0 \le \rho^{\ep} < 1}} \rho^{\gamma} ((\rho^{\ep})^{\alpha} - 1) \varphi_{R_n,R_{n-1}}^2 dx
+ \int_{\set{1 \le \rho^{\ep} < 2}} \rho^{\gamma} ((\rho^{\ep})^{\alpha} - 1) \varphi_{R_n,R_{n-1}}^2 dx \\
&\quad + \int_{\set{2 \le \rho^{\ep}}} \rho^{\gamma} ((\rho^{\ep})^{\alpha} - 1) \varphi_{R_n,R_{n-1}}^2 dx
+ \int \rho^{\gamma} \varphi_{R_n,R_{n-1}}^2 dx \\
&\le 2^{\alpha} \left( \int \rho^{\gamma} [(\rho^{\ep} -1)^{\alpha}]^{+} \varphi_{R_n,R_{n-1}}^2 dx 
+ \int \rho^{\gamma} \varphi_{R_n,R_{n-1}}^2 dx \right)
\end{align*}
where we used $((\rho^{\ep})^{\alpha} - 1) \le 2^{\alpha} (\rho^{\ep} - 1)^{\alpha} = 2^{\alpha} [(\rho^{\ep} - 1)^{\alpha}]^{+}$ in the last inequality, which holds for $\rho^{\ep} \ge 2$.
Therefore, we get
\[
\int \rho^{\gamma} (\rho^{\ep})^{\alpha} \varphi_{R_n,R_{n-1}}^2 dx
\lesssim \int \rho^{\gamma} [(\rho^{\ep} -1)^{\alpha}]^{+} \varphi_{R_n,R_{n-1}}^2 dx 
+ \norm{\rho}_{L^\gamma(\bR^3)}^{\gamma}.
\]

\textbf{Step 2)}
Define
\[
\bfphi_{\ep, \alpha, n} := \nabla \Delta^{-1} \left( [(\rho^{\ep} -1)^{\alpha}]^{+} \varphi_{R_n,R_{n-1}} \right) \varphi_{R_n,R_{n-1}}. 
\]
Then $\bfphi_{\ep, \alpha, n} \in C^{\infty}_c(\bR^3; \bR^3)$ and
\[
\divg \bfphi_{\ep, \alpha, n} 
= [(\rho^{\ep}-1)^{\alpha}]^{+} \varphi_{R_n, R_{n-1}}^2
+ \nabla \Delta^{-1} \left( [(\rho^{\ep} -1)^{\alpha}]^{+} \varphi_{R_n,R_{n-1}} \right) \cdot \nabla \varphi_{R_n,R_{n-1}}. 
\]
Since $\nabla \Delta^{-1}$ maps $L^p(\bR^3)$ to $D^{1,p}_0(\bR^3)$, we have, by means of Jensen's inequality,
\begin{align}
\begin{split}
\label{E522}
\norm{\nabla \bfphi_{\ep, \alpha, n}}_{L^{p}(B(R_{n-1}))} 
&\lesssim \norm{[(\rho^{\ep}-1)^{\alpha}]^{+} }_{L^{p}(B(R_{n-1}))}
+ R^{-1} \norm{[(\rho^{\ep}-1)^{\alpha}]^{+} }_{L^{\frac{3p}{p+3}}(B(R_{n-1}))} \\
&\lesssim \norm{[(\rho^{\ep}-1)^{\alpha}]^{+} }_{L^{p}(B(R_{n-1}))}
\end{split}
\end{align}
for $1 < p < \infty$.
Using $\bfphi_{\ep, \widetilde \gamma_n , n} $ as a test function in the momentum equation \eqref{E16}, we have
\begin{align*}
\int \rho^{\gamma} [(\rho^{\ep}-1)^{\widetilde \gamma_n }]^{+} \varphi_{R_n, R_{n-1}}^2 dx
&= - \int_{B(R_{n-1})} \rho^{\gamma}  \nabla \Delta^{-1} \left( [(\rho^{\ep} -1)^{\widetilde \gamma_n }]^{+} \varphi_{R_n,R_{n-1}} \right) \cdot \nabla \varphi_{R_n,R_{n-1}} dx \\
&\quad + \int_{B(R_{n-1})} \mathbb S(\nabla \bu) : \nabla \bfphi_{\ep, \widetilde \gamma_n , n}  dx
-\int_{B(R_{n-1})} \rho \nabla G \cdot \bfphi_{\ep, \widetilde \gamma_n , n} dx \\
&\quad - \int_{B(R_{n-1})} \rho \bu \otimes \bu : \nabla \bfphi_{\ep, \widetilde \gamma_n , n} dx \\
&:= L_1 + L_2 + L_3 + L_4.
\end{align*}
Note that 
\[
\norm{\rho}_{L^{2\gamma}(B(R))} \lesssim R^{\frac{1}{2\gamma^2}} 
\]
by \eqref{E511b} with $\alpha = 0$, so we have
\[
\norm{\rho}_{L^{\frac{6\gamma}{5}}(B(R))} \lesssim R^{\frac{1}{6\gamma^2}}. 
\]
By H\"older's inequality, 
\begin{align*}
L_1 
&\lesssim R^{-1} \norm{\rho}_{L^{\frac{6\gamma}{5}}(B(R_{n-1}))}
\norm{ \nabla \Delta^{-1} \left( [(\rho^{\ep} -1)^{\widetilde \gamma_n }]^{+} \varphi_{R_n,R_{n-1}} \right)}_{L^6(B(R_{n-1}))} \\
&\lesssim R^{-1} \norm{\rho}_{L^{\frac{6\gamma}{5}}(B(R_{n-1}))}
\norm{[(\rho^{\ep} -1)^{\widetilde \gamma_n }]^{+} }_{L^2(B(R_{n-1}))} \\
&\lesssim R^{-1 + \frac{1}{6\gamma^2}} \norm{(\rho^{\ep} -1)^{+} }_{L^{2\widetilde \gamma_n}(B(R_{n-1}))}^{\widetilde \gamma_n}.
\end{align*}
By H\"older's inequality, Sobolev's inequality, and \eqref{E522}, we can obtain
\begin{align*}
L_2 + L_3
&\lesssim \norm{\nabla \bu}_{L^2(\bR^3)} 
\norm{\nabla \bfphi_{\ep, \widetilde \gamma_n, n}}_{L^{2}(B(R_{n-1}))} \\
&\quad + \norm{\rho}_{L^{\gamma} \left(B(R_{n-1})\right)} 
\norm{\nabla G}_{L^{\frac{6\gamma}{5\gamma-6}}(\bR^3)}
\norm{\bfphi_{\ep, \widetilde \gamma_n, n}}_{L^{6}(B(R_{n-1}))} \\
&\lesssim \norm{\nabla \bfphi_{\ep, \widetilde \gamma_n, n}}_{L^{2}(B(R_{n-1}))} \\
&\lesssim \norm{(\rho^{\ep}-1)^{+} }_{L^{2\widetilde \gamma_n}(B(R_{n-1}))}^{\widetilde \gamma_n}.
\end{align*}
Here, we also use the assumption $\rho \in L^{\gamma}(\bR^3)$, $G \in D^{1, \frac{6\gamma}{5\gamma-6}}_0(\bR^3)$, and $\bu \in D^{1,2}_0(\bR^3)$.
Similarly, 
\begin{align*}
L_4
&\lesssim \norm{\rho}_{L^{c_{n-1}} \left(B(R_{n-1})\right)} 
\norm{\nabla \bu}_{L^2(\bR^3)}^2 
\norm{\nabla \bfphi_{\ep, \widetilde \gamma_n, n}}_{L^{\frac{3c_{n-1}}{2c_{n-1}-3}} \left(B(R_{n-1})\right)} \\
&\lesssim \norm{\rho}_{L^{c_{n-1}} \left(B(R_{n-1})\right)}
\norm{(\rho^{\ep}-1)^{+} }_{L^{c_{n-1}}(B(R_{n-1}))}^{\widetilde \gamma_n}.
\end{align*}

\textbf{Step 3)}
Combining all the estimates, 
\[
\int \rho^{\gamma} (\rho^{\ep})^{\widetilde \gamma_n } \varphi_{R_n,R_{n-1}}^2 dx
\lesssim \norm{(\rho^{\ep} -1)^{+} }_{L^{2\widetilde \gamma_n}(B(R_{n-1}))}^{\widetilde \gamma_n}
+ R^{d_{n-1}} \norm{(\rho^{\ep}-1)^{+} }_{L^{c_{n-1}}(B(R_{n-1}))}^{\widetilde \gamma_n}.
\]
Taking $\liminf_{\ep \to 0}$ both sides, we obtain
\begin{align*}
\int \rho^{c_{n}} \varphi_{R_n,R_{n-1}}^2 dx
&\lesssim \norm{(\rho -1)^{+} }_{L^{2\widetilde \gamma_n}(B(R_{n-1}))}^{\widetilde \gamma_n}
+ R^{d_{n-1}} \norm{(\rho-1)^{+} }_{L^{c_{n-1}}(B(R_{n-1}))}^{\widetilde \gamma_n} \\
&\lesssim \norm{(\rho -1)^{+} }_{L^{2\widetilde \gamma_n}(B(R_{n-1}))}^{\widetilde \gamma_n}
+ R^{d_{n-1}} \norm{\rho }_{L^{c_{n-1}}(B(R_{n-1}))}^{\widetilde \gamma_n}.
\end{align*}
Since $2\widetilde \gamma_n \le c_{n-1}$ for $n \le k$, we have
\[
\int_{B(R_{n-1})} \rho^{c_{n}} dx
\le \int \rho^{c_{n}} \varphi_{R_n,R_{n-1}}^2 dx
\lesssim R^{(1+ \widetilde \gamma_n) d_{n-1}}. 
\]
That is, 
\begin{equation}
\label{E521}
c_{n} d_{n} = (1+ \widetilde \gamma_n) d_{n-1}.
\end{equation}
As $\rho \in L^{\gamma}(\bR^3)$, we have $d_0=0$ and hence
\[
\rho \in L^{c_n}(\bR^3)
\]
for all $n \le k$.
Since $c_k \ge 6$, we can conclude that $\rho \bu \otimes \bu \in L^2(\bR^3; \bR^{3 \times 3})$.
Therefore, following the argument of Corollary \ref{C11}, we can easily conclude that
\[
\rho \in L^{2\gamma}(\bR^3).
\]
For $\gamma=3$ case, we can similarly obtain the result by Proposition \ref{P3} instead of \eqref{E511b} and considering $\lim_{n \to \infty} d_n$ in \eqref{E521}.
\end{proof}

\subsection{Proof of Theorem \ref{T3}}
\label{S6}

From Proposition \ref{P1}, Proposition \ref{P2}, and Proposition \ref{P3}, we have
\begin{equation}
\label{E61}
\sup_{R > 1} R^{- f(\alpha, \gamma)} \norm{\rho}_{L^{2\gamma}(B(R))} < \infty
\end{equation}
and hence $\rho \in L^{2\gamma}_{\loc}(\bR^3)$.
So we may use $\bu \varphi_{R,2R}$ as a test function in  \eqref{E16}  to get
\begin{equation}
\label{E62}
\int_{\bR^3} \rho \bu \otimes \bu : \nabla \left(\bu \varphi_{R,2R} \right) + \rho^\gamma \divg \left(\bu \varphi_{R,2R} \right) dx 
= \int_{\bR^3} \mathbb S(\nabla \bu) : \nabla \left(\bu \varphi_{R,2R} \right) dx 
- \int_{\bR^3} \rho \nabla G \cdot \left(\bu \varphi_{R,2R} \right) dx.
\end{equation}
Direct calculation using integration by parts yields
\begin{align*}
&\int_{\bR^3} \left( \mu |\nabla \bu|^2 + (\mu + \lambda) |\divg \bu|^2 \right) \varphi_{R,2R} dx \\
&= \frac{\mu}{2} \int_{\bR^3} |\bu|^2 \Delta \varphi_{R, 2R} dx
+ \int_{\bR^3} \mu (\bu \otimes \bu) : \nabla^2 \varphi_{R, 2R} + (\mu + \lambda) \divg \bu (\bu \cdot \nabla \varphi_{R,2R}) dx \\
&\quad + \frac{1}{2} \int_{\bR^3} \rho \bu \cdot \nabla \left( |\bu|^2 \varphi_{R, 2R} \right) dx
+ \frac{1}{2} \int_{\bR^3} \rho |\bu|^2 \bu \cdot \nabla \varphi_{R, 2R} dx \\
&\quad + \int_{\bR^3} \rho^{\gamma} \divg \bu \varphi_{R, 2R} dx
+ \int_{\bR^3} \rho^{\gamma} \bu \cdot \nabla \varphi_{R, 2R} dx
+ \int_{\bR^3} \rho \nabla G \cdot \left(\bu \varphi_{R,2R} \right) dx.
\end{align*}
Since $\rho \bu \in L^{3}_{\loc}(\bR^3)$, we have from \eqref{E15} that
\[
\int_{\bR^3} \rho \bu \cdot \nabla \left( |\bu|^2 \varphi_{R, 2R} \right) dx = 0.
\]
Regularizing \eqref{E15}, we also have
\[
\divg (\rho^{\ep} \bu) = r_{\ep} \quad \text{a.e. in} \quad \bR^3
\]
where
\[
r_{\ep} = \divg (\rho^{\ep} \bu) - \divg((\rho \bu)^{\ep}).
\]
By Lemma \ref{L21}, we have, for $1 \le q \le \frac{2\gamma}{\gamma+1}$,
\begin{equation}
\label{E63}
r_{\ep} \to 0 \quad \text{strongly in} \quad L^{q}_{\loc}(\bR^3).
\end{equation}
Performing integration by parts, we obtain
\begin{align*}
\int_{\bR^3} \rho^{\gamma} \divg \bu \varphi_{R, 2R} dx
&= \int_{\bR^3} \left( \rho^{\gamma} - (\rho^{\ep})^{\gamma} \right) \divg \bu \varphi_{R, 2R} dx
+ \int_{\bR^3} (\rho^{\ep})^{\gamma} \divg \bu \varphi_{R, 2R} dx \\
&= \int_{\bR^3} \left( \rho^{\gamma} - (\rho^{\ep})^{\gamma} \right) \divg \bu \varphi_{R, 2R} dx
- \int_{\bR^3} \bu \cdot \nabla \left( (\rho^{\ep})^{\gamma} \varphi_{R, 2R} \right) dx.
\end{align*}
Performing integration by parts to the last term, we get
\[
\int_{\bR^3} \bu \cdot \nabla \left( (\rho^{\ep})^{\gamma} \varphi_{R, 2R} \right) dx
= - \frac{\gamma}{\gamma-1} \int_{\bR^3} r_{\ep} (\rho^{\ep})^{\gamma-1} \varphi_{R, 2R} dx
- \frac{1}{\gamma-1} \int_{\bR^3} (\rho^{\ep})^{\gamma} \bu \cdot \nabla \varphi_{R, 2R} dx.
\]
Lastly, we have
\[
\int_{\bR^3} \rho \nabla G \cdot \left(\bu \varphi_{R,2R} \right) dx 
= \int_{\bR^3} \left( \rho - \rho^{\ep} \right) \nabla G \cdot \left(\bu \varphi_{R,2R} \right) dx 
+ \int_{\bR^3} \rho^{\ep} \nabla G \cdot \left(\bu \varphi_{R,2R} \right) dx. 
\]
Performing integration by parts to the last term,
\[
\int_{\bR^3} \rho^{\ep} \nabla G \cdot \left(\bu \varphi_{R,2R} \right) dx
= - \int_{\bR^3} G r_{\ep} \varphi_{R, 2R} dx  
- \int_{\bR^3} G \rho^{\ep} \bu \cdot \nabla \varphi_{R,2R} dx.
\]
Combining all the calculations, we get the following local energy inequality by H\"older's inequality
\begin{align}
\begin{split}
\label{E64}
\int_{B(R)} |\nabla \bu|^2 dx 
&\lesssim R^{-2} \norm{\bu}_{L^2(A_R)}^2
+ \norm{\nabla \bu}^2_{L^2(A_R)} 
+ R^{-1} \norm{\rho}_{L^{2\gamma}(A_R)} \norm{\bu}^3_{L^{\frac{6\gamma}{2\gamma-1}}(A_R)} \\
&\quad + R^{-1} \norm{\rho}_{L^{2\gamma}(A_R)}^{\gamma} \norm{\bu}_{L^2(A_R)} 
+ \norm{\rho^{\gamma} - (\rho^{\ep})^{\gamma}}_{L^2(B(2R))} \norm{\nabla \bu}_{L^2(B(2R))} \\
&\quad + \norm{r_{\ep}}_{L^{\frac{2\gamma}{\gamma+1}}(B(2R))} \norm{\rho^{\ep}}^{\gamma-1}_{L^{2\gamma}(B(2R))} 
+ \norm{\rho - \rho^{\ep}}_{L^{\gamma}(B(2R))} \norm{\nabla G}_{L^{\frac{6\gamma}{5\gamma-6}}(B
(2R))} \norm{\nabla \bu}_{L^2(\bR^3)} \\
&\quad + \norm{G}_{L^{\frac{2\gamma}{\gamma-2}}(B(2R))} \norm{r_{\ep}}_{L^{\frac{2\gamma}{\gamma+2}}(B(2R))}
+ R^{-1} \norm{G}_{L^{\frac{2\gamma}{\gamma-2}}(A_R)} \norm{\rho^{\ep}}_{L^{\gamma}(A_R)} \norm{\bu}_{L^{2}(A_R)}
\end{split}
\end{align}
where $A_R = B(2R) \setminus B(R)$.
Using the mean value theorem, we have
\begin{equation}
\label{E65}
\norm{\rho^{\gamma} - (\rho^{\ep})^{\gamma}}_{L^2(B(2R))} 
\lesssim \norm{\rho + \rho^{\ep}}_{L^{2\gamma}(B(2R))}^{\gamma-1}
\norm{\rho - \rho^{\ep}}_{L^{2\gamma}(B(2R))}.
\end{equation}
From \eqref{E63}, \eqref{E65}, the properties of mollifier, and the assumption $G \in D^{1, \frac{6\gamma}{5\gamma-6}}_0(\bR^3)$, we have
\begin{align}
\begin{split}
\label{E66}
\int_{B(R)} |\nabla u|^2 dx 
&\lesssim R^{-2} \norm{\bu}_{L^2(A_R)}^2
+ \norm{\nabla \bu}^2_{L^2(A_R)} 
+ R^{-1} \norm{\rho}_{L^{2\gamma}(A_R)} \norm{\bu}^3_{L^{\frac{6\gamma}{2\gamma-1}}(A_R)} \\
&\quad + R^{-1} \norm{\rho}_{L^{2\gamma}(A_R)}^{\gamma} \norm{\bu}_{L^2(A_R)}
+ R^{-1} \norm{\rho}_{L^{\gamma}(A_R)} \norm{\bu}_{L^{2}(A_R)} \\
&:= \sum_{i=1}^5 J_i
\end{split}
\end{align}
by letting $\ep \to 0$ in \eqref{E64}.
Since $\bu \in D^{1,2}_0(\bR^3; \bR^3)$, Jensen's inequality yields
\[
J_1 + J_2 \lesssim \norm{\bu}_{L^6(A_R)} + \norm{\nabla \bu}_{L^2(A_R)} \to 0
\]
as $R \to \infty$.
For $J_3$, $J_4$ and $J_5$, we use H\"older's inequality to get
\begin{align*}
J_3 &\lesssim R^{-1} \norm{\rho}_{L^{2\gamma}(A_R)}
 \norm{\bu}^{\frac{3\beta(\gamma-1)}{\gamma(6-\beta)}}_{L^{\beta}(A_R)} 
\norm{\bu}^{3-\frac{3\beta(\gamma-1)}{\gamma(6-\beta)}}_{L^6(A_R)}, \\
J_4 &\lesssim R^{-1} \norm{\rho}_{L^{2\gamma}(A_R)}^{\gamma} 
\norm{\bu}_{L^\beta(A_R)}^{\frac{2\beta}{6-\beta}} 
\norm{\bu}_{L^6(A_R)}^{\frac{6-3\beta}{6-\beta}}, \\
J_5 &\lesssim R^{-1} \norm{\rho}_{L^{\gamma}(A_R)} 
\norm{\bu}_{L^\beta(A_R)}^{\frac{2\beta}{6-\beta}} 
\norm{\bu}_{L^6(A_R)}^{\frac{6-3\beta}{6-\beta}}.
\end{align*}
Note that $J_5$ is dominated by $J_4$ since $\gamma f(\alpha, \gamma) \ge \alpha$ and $J_4$ is dominated by $J_5$ if and only if $f(\alpha, \gamma) \le \frac{\gamma-3}{\gamma(3\gamma-5)}$.
Using \eqref{E110}, \eqref{E111}, and \eqref{E61}, and the assumptions $\bu \in D^{1,2}_0(\bR^3; \bR^3)$, we obtain
\[
\liminf_{R \to \infty} \left( J_3 + J_4 + J_5 \right) = 0.
\]
Therefore, we can conclude from \eqref{E66} that $\nabla \bu = 0$ a.e. and hence $\bu = 0$ a.e.

\subsection{Proof of Theorem \ref{T21}}
\label{S9}

Since we assumed $\rho \in L^{6} \cap L^{\gamma}(\bR^3)$, we can obtain from Corollary \ref{C1} and Corollary \ref{C2} that $\rho \in L^{6} \cap L^{2\gamma}(\bR^3)$.
Combine this with the assumption on $\bu$ and $G$, we may use $\bu$ as a test function in \eqref{E16} to get
\begin{equation}
\label{E81}
\int_{\bR^3} \rho \bu \otimes \bu : \nabla \bu dx 
+ \int_{\bR^3} \rho^\gamma \divg \bu dx 
= \int_{\bR^3} \mathbb S(\nabla \bu) : \nabla \bu dx 
- \int_{\bR^3} \rho \nabla G \cdot \bu dx.
\end{equation}
Then the Diperna--Lions theory yields, see e.g. \cite[Lemma 3.3]{MR2084891}:   
\[
\int_{\bR^3} \rho^{\gamma} \bu \cdot \nabla \phi dx
+ \int_{\bR^3} (\gamma-1) \rho^{\gamma} \divg \bu \phi dx 
= 0
\]
for all $\phi \in C^{\infty}_c (\bR^3)$.
Since $\rho^{\gamma} \bu \in L^1(\bR^3; \bR^3)$ and $\rho^{\gamma} \divg \bu \in L^1(\bR^3)$, we have
\[
\int_{\bR^3} \rho^\gamma \divg \bu dx = 0.
\]
Hence
\[
\int_{\bR^3} \mathbb S(\nabla \bu) : \nabla \bu dx
= \int_{\bR^3} \rho \bu \otimes \bu : \nabla \bu dx 
+ \int_{\bR^3} \rho \nabla G \cdot \bu dx.
\]
Note that $\rho \bu \in L^{3}\cap L^{\frac{6\gamma}{\gamma+3}}(\bR^3)$.
Hence, we have from \eqref{E15} that
\[
\int_{\bR^3} \rho \bu \otimes \bu : \nabla \bu dx 
+ \int_{\bR^3} \rho \nabla G \cdot \bu dx
= \int_{\bR^3} \rho \bu \cdot \nabla \left( \frac{1}{2} |\bu|^2 + G \right) dx = 0.
\]
Therefore, we have
\[
\int_{\bR^3} \mu |\nabla \bu|^2 + (\mu/3 + \eta) |\divg \bu|^2 dx = 0
\]
and hence $\bu = 0$ a.e. as desired.

\subsection{Proof of Theorem \ref{T22}}
\label{S10}

Proof of Theorem \ref{T22} is analogous to that of Theorem \ref{T21}.
By Corollary \ref{C11}, we have 
\[
\rho \in L^{2\gamma}(\bR^3).
\]
Using \eqref{E112}, we have
\[
\rho \bu \in L^{\frac{2\gamma}{\gamma+1}} \cap L^{\frac{6\gamma}{\gamma+3}}(\bR^3; \bR^3), 
\quad \rho \bu \otimes \bu \in L^1 \cap L^{\frac{6\gamma}{3+2\gamma}}(\bR^3; \bR^{3 \times 3}), 
\qand \rho \nabla G \in L^\frac{6}{5} \cap L^{\frac{6\gamma}{5\gamma-3}}(\bR^3; \bR^3).
\] 
Note that $\frac{2\gamma}{\gamma+1} \le 3 \le \frac{6\gamma}{\gamma+3}$ and $\frac{6\gamma}{3+2\gamma} \ge 2$ since $\gamma \ge 6$.
Testing $\bu$ to \eqref{E16} and using Diperna--Lions theory and \eqref{E15}, we can conclude that $\bu = 0$ a.e.

\subsection{Proof of Theorem \ref{T23}}
\label{S11}

Proof of Theorem \ref{T23} is also analogous to that of Theorem \ref{T21}.
By Corollary \ref{C2}, we have 
\[
\rho \in L^{2\gamma}(\bR^3).
\]
This time, we have
\[
\rho \bu \in L^{\frac{6\gamma}{\gamma+6}} \cap L^{\frac{6\gamma}{\gamma+3}}(\bR^3; \bR^3), 
\quad \rho \bu \otimes \bu \in L^{\frac{3\gamma}{\gamma+3}} \cap L^{\frac{6\gamma}{3+2\gamma}}(\bR^3; \bR^{3 \times 3}), 
\qand \rho \nabla G \in L^\frac{6}{5} \cap L^{\frac{6\gamma}{5\gamma-3}}(\bR^3; \bR^3).
\] 
Note that $\frac{6\gamma}{\gamma+6} < 3 \le \frac{6\gamma}{\gamma+3}$ and $\frac{3\gamma}{\gamma+3} < 2 \le \frac{6\gamma}{3+ 2\gamma}$ since $3 \le \gamma < 6$.
We may repeat the same argument to conclude that $\bu = 0$ a.e.


\section*{Acknowledgement}

E. F. was supported by the Czech Science Foundation (GA\v CR) (No. 24--11034S). The Institute of Mathematics of the Academy of Sciences of
the Czech Republic is supported by RVO:67985840. E.F. is a member of the Ne\v cas Center for Mathematical Modelling.
Y. C. and M. Y. have been supported by the National Research Foundation of Korea(NRF) grant funded by the Korean government(MSIT) (No. 2021R1A2C4002840).


\end{document}